\theoremstyle{plain}
\newtheorem{lem}{Lemma}[section]
\newtheorem{thm}[lem]{Theorem}
\newtheorem{cor}[lem]{Corollary}
\theoremstyle{definition}
\newtheorem{defn}{Definition}[section]
\theoremstyle{remark}
\newtheorem{rem}{Remark}[section]
\newcommand{\p}{\partial}
\newcommand{\ds}{\displaystyle}
\newcommand{\ms}{\medskip}
\newcommand{\R}{ \mathbb{R}}
\def \e{\ensuremath{\mathrm{e}}}
\def \i{\ensuremath{\mathrm{i}}}
\def \d{\ensuremath{\mathrm{d}}}
\begin{document}

\title{ \large\bf A mathematical theory of  microscale hydrodynamic cloaking and shielding by electro-osmosis}
\author{
Hongyu Liu\thanks{Department of Mathematics, City University of Hong Kong, Kowloon, Hong Kong, China.\ \ Email: hongyu.liuip@gmail.com; hongyliu@cityu.edu.hk}
\and
Zhi-Qiang Miao\thanks{College of Mathematics, Hunan University, Changsha 410082, Hunan Province, China. Email: zhiqiang\_miao@hnu.edu.cn}
\and
Guang-Hui Zheng\thanks{Corresponding author. College of Mathematics, Hunan University, Changsha 410082, Hunan Province, China. Email: zhenggh2012@hnu.edu.cn; zhgh1980@163.com}
}
\date{}%
\maketitle
\begin{abstract}
In this paper, we develop a general mathematical framework for perfect and approximate hydrodynamic cloaking and shielding of electro-osmotic flow, which is governed by a coupled PDE system via the field-effect electro-osmosis. We first establish the representation formula of the solution of the coupled system using the layer potential techniques. Based on Fourier series, the perfect hydrodynamic cloaking and shielding conditions are derived for the control region with the cross-sectional shape being annulus or confocal ellipses. Then we further propose an optimization scheme for the design of approximate cloaks and shields within general geometries. The well-posedness of the optimization problem is proved. In particular, the condition that can ensure the occurrence of approximate cloaks and shields for general geometries are also established. Our theoretical findings are validated and supplemented by a variety of numerical results. The results in this paper also provide a mathematical foundation for more complex hydrodynamic cloaking and shielding.
\end{abstract}
\section{Introduction}
During the past 16 years, there has been rapid progress in rendering objects invisible by cloaking them with metamaterials in physics and mathematics subjects. One influential methodology to design metamaterials is the transformation-based approach. Since the pioneering works \cite{GLU,Pendry2006, Leonhardt2006}, many studies about cloaking based on the transformation theory appear in succession in various scientific areas, including acoustic waves \cite{ammari2013, deng2017,Kohn2010, Liu2009}, conductive heat flux  \cite{Craster2017, Narayana2012}, electromagnetic waves\cite{bao2014, deng2017(1), greenleaf2009}, stresses in an elastic medium  \cite{li2018, Stenger2012, li2016}, dc electric currents \cite{Yang2012}, and quantum mechanical matter waves \cite{Zhang2008}.
Effectively, these cloaks are shells composed of metamaterials with a tailor-made distribution of the effective material parameters, determined by the invariance of governing equations under a transformation of the spatial coordinates. However, the requirement for inhomogeneous and anisotropic parameters makes it difficult to fabricate devices designed by transformation optics. Consequently, the feasibility of fabricating cloaks using composite natural materials has attracted great interest. For this purpose, scattering-cancellation technology has been developed and successfully used in electromagnetism \cite{Alu2005} and other fields \cite{Chen2012}.
Generally speaking, this method can realize a similar function to transformation optics, while it only needs bilayer or monolayer structures and homogeneous isotropic bulk materials.
In addition to the bulk materials, the metasurface cloak has also received much attention with the advantages of low weight and thinness. Instead of employing a shell, the metasurface cloak uses an ultrathin frequency-selective surface which is designed so that the induced currents along the surface cancel the scattering from the object to a cloak  \cite{Alu2009, Alu2011, Chen2011, Padooru2012}. Such a strategy is particularly relevant at microwave frequencies at which frequency-selective surfaces are readily available and easy to fabricate \cite{Munk2005, Tretyakov2003}.

In fluid, hydrodynamic cloaking has been a subject of intense research all the time. There are reports in the physics literature. The first systematic analysis of hydrodynamic cloaking is due to Urzhumov and Smith \cite{Urzhumov2011}, based on transformation theory. Later, they further extended the analysis to the case of a two-dimensional flow around a cylinder in a medium with mixed positive and negative permeability \cite{Urzhumov2012}.  For the high Reynolds numbers, it was demonstrated how to confine the water waves in a certain area for cloaking regions \cite{Zou2019, Zhang2020}.
More recently, another hydrodynamic model has been used to control fluid motion, i.e., the creeping flow or Stokes flow (Reynolds number $Re \ll 1$ ) inside two parallel plates, and a series of experimental works have been reported \cite{Park2019a, Park2019b, Park2021, Boyko2021}. The gap between two plates is much smaller than the characteristic length of the other two spatial dimensions, so the model is also called the Hele-Shaw flow or Hele-Shaw cell \cite{Hele1898}.  By using these microfluidic structures, Park et al  \cite{Park2019a, Park2019b} have demonstrated by simulation that such anisotropic fluid media can be mimicked within the cloak, thereby producing the desired hydrodynamic cloaking effect. We should mention that a transformed fluid medium is not yet physically realizable; proof of concepts is so far by numerical simulation. The implementation of such a transformation-based fluid-flow cloak relied on 10 layers of metamaterial microstructures, as well as a fluid background filled with microcylinders to avoid impedance mismatch. Hence, there has been a growing interest in realizing metamaterial-less hydrodynamic cloaks.

In particular,  in \cite{Boyko2021} Boyko et al present a new theoretical approach and an experimental demonstration of hydrodynamic cloaking and shielding in a Hele-Shaw cell that does not rely on metamaterials. In analogy to optics, “hydrodynamic cloaking” refers to a state wherein the (flow) field external to some region around an object is unaffected by the presence of the object. “Hydrodynamic cloaking” refers to the elimination of hydrodynamic forces on the object.  The mechanism is based on the fact that flow fields on small scales are completely governed by momentum sources at boundaries and can be linearly superposed. An effective way to create such momentum sources is electro-osmotic flow (EOF)---an electrokinetic phenomenon that generates flow due to the interaction of an applied electric field with the native or induced net charge at liquid-solid interfaces \cite{Hunter2001}. The fluid pressure in the Hele-Shaw cell also satisfies a Laplacian-type equation, which has the analytical solutions for circular cylinder objects in the polar coordinate. With analytical solutions, Boyko et al first achieve both cloaking and shielding conditions for circular cylinder objects. Further, they establish approximate cloaking and shielding for the more complex shape of a slight deform from a perfectly cylindrical shape using shape perturbation theory.

In this paper, we greatly extend the results of Boyko et al \cite{Boyko2021} and establish a more general mathematical framework. For details, the contributions of this work are fourfold:
\begin{itemize}
\item{Based on the physics literature \cite{Boyko2021}, we give the rigorous mathematical definition of hydrodynamic shielding and cloaking.}
\item{The representation formula of solution of the coupled system is obtained firstly, which gives a quantitative description of the hydrodynamic model.}
\item{By using the uniform approach--layer potential theory, we establish sharp conditions that can ensure the occurrence of the hydrodynamic cloaking and shielding for annulus (radial case) and confocal ellipses (non-radial case). Especially, for the confocal ellipses case which is not considered in \cite{Boyko2021}, we introduce additional elliptic coordinates technique to overcome the difficulty caused by non-radial geometry.}
\item{For more general geometry, we further propose an optimization method to design the hydrodynamic shielding and cloaking, and prove the well-posedness of optimization problem. More important, the condition that can ensure the occurrence of approximate cloaks and shields for arbitrary-geometry are also established. A large amount of numerical experiments, which contains smooth objects, non-smooth objects and multiple objects, indicates that the optimized zeta potential can achieve the hydrodynamic shielding and cloaking effectively.}
\end{itemize}

The paper is organized in the following way. We begin with the mathematical setting of the problem and statement of the main results in Section \ref{sec-problem}. In Section \ref{sec-layer-potentials}, we first present some preliminary knowledge on boundary layer potentials and then establish the representation formula of solution of the governing equations.
Section \ref{sec-cloaking-shilding} is devoted to the study of the perfect and approximate cloaking and shielding conditions by the analytical method and the optimization method, respectively.
In Section \ref{sec-NumSim}, we conduct numerical experiments to corroborate our theoretical findings. The paper is concluded in Section~\ref{sect:6} with some relevant discussions. 

\section{Mathematical setting of the problem and statement of the main results}\label{sec-problem}
To begin, we consider the creeping flow of a viscous fluid of density $\rho$, viscosity $\mu$, and dielectric permittivity $\varepsilon$ within a
narrow gap $\tilde{h}$ between two parallel plates of length $\tilde{L}$ and width $\tilde{W}$ forming a Hele-Shaw configuration in $\R^3$, as shown
in Figure \ref{fig-schematic}(a). We employ a Cartesian coordinate system $\tilde{x} =(\tilde{x}_1, \tilde{x}_2, \tilde{x}_3) \in \R^3$, where the $\tilde{x}_1$ and $\tilde{x}_2$ axes lie at the lower plane and the $\tilde{x}_3$ axis is perpendicular thereto. Let $\tilde{\zeta}^L = \tilde{\zeta}^L(\tilde{x})$ and $\tilde{\zeta}^U = \tilde{\zeta}^U(\tilde{x})$ be an arbitrary zeta-potential distribution in the lower and upper plates. Let $\tilde{\bm{v}} = (\tilde{\bm{u}}(\tilde{x}, \tilde{t}),\tilde{w}(\tilde{x}, \tilde{t}))$ and $\tilde{p} = \tilde{p}(\tilde{x})$ denote the velocity field and  the pressure,  respectively, of the fluid. Here $\tilde{\bm{u}}(\tilde{x}, \tilde{t})$ is  the in-plane velocity field. When an electrostatic in-plane electric field $\tilde{\bm E}$ is applied
parallel to the plates, the fluid motion is then governed by the continuity and momentum equations
\begin{equation*}
  \tilde{\nabla}\cdot \tilde{\bm v} = 0, \qquad \rho\Big( \frac{\p \tilde{\bm v}}{\p \tilde{t}}+ \tilde{\bm v}\cdot  \tilde{\nabla}\tilde{\bm v}\Big)
 =  -  \tilde{\nabla} \tilde{p} + \mu \tilde{\Delta}  \tilde{\bm v},
\end{equation*}
with  the Helmholtz--Smoluchowski slip boundary conditions
\begin{equation*}
  \tilde{\bm{u}}|_{\tilde{x}_3=0}= -\frac{\varepsilon\tilde{\zeta}^L \tilde{\bm E}}{\mu}, \qquad   \tilde{\bm{u}}|_{\tilde{x}_3=\tilde{h}}= -\frac{\varepsilon\tilde{\zeta}^U \tilde{\bm E}}{\mu}.
\end{equation*}
These equations contain three independent dimensional parameters, the density $\rho$,  the viscosity $\mu$ and the dielectric permittivity $\varepsilon$, which are both constant and assumed to be known. For convenience in what follows, we mark the dimensional variables that appear in these equations with a tilde, for instance, $\tilde{\bm u}$, $\tilde{p}$, and so on. In addition, we also denote the dimensional gradient and Laplace operator as $\tilde{\nabla}$ and $\tilde{\Delta}$.

By making use of the Helmholtz--Smoluchowski slip condition, we have implicitly assumed that surface conduction
is negligible and thus consider asymptotically small Dukhin numbers. Under this assumption, the fluid in the bulk is electrically neutral and the electric field $\tilde{\bm E}$ is solenoidal, i.e., $\tilde{\nabla} \cdot \tilde{\bm E}  = 0$.  The electric field can be expressed through an electrostatic potential $\tilde{\varphi}$, $\tilde{\bm E} = -\tilde{\nabla} \tilde{\varphi} $, that is governed by the Laplace equation $\Delta \tilde{\varphi} =0$, which satisfies the insulation boundary conditions on the walls, $\frac{\p \tilde{\varphi} }{\p \tilde{x}_3}|_{\tilde{x}_3=0, \tilde{h}} = 0$.

In microscale flows, fluidic inertia is commonly negligible compared to viscous stresses. Further, the gap is narrow. Therefore, we restrict our analysis to shallow geometries ($\tilde{h}\ll \tilde{L}, \tilde{W}$) and neglect fluidic inertia, represented by a small reduced Reynolds number.
Applying the lubrication approximation, we average over the depth of the cell and reduce the analysis to a two-dimensional problem in $\R^2$. The governing equations for the depth-averaged velocity $\bm \tilde{\bm u}_{aver}$, the pressure $\tilde{p}$, and the electrostatic potential $\tilde{\varphi}$ in $\tilde{x}_1$--$\tilde{x}_2$ plane are (see \cite{Boyko2021} )
\begin{equation}\label{dimension-equations}
 \tilde{\bm u}_{aver} = - \frac{\tilde{h}^2}{12 \mu} \tilde{\nabla} \tilde{p} + \tilde{\bm u}_{slip},  \quad \tilde{\Delta}\tilde{p} = \frac{12 \varepsilon}{\tilde{h}^2}\tilde{\nabla}\tilde{\varphi}\cdot \tilde{\nabla}\tilde{\zeta}_{mean} \quad \mbox{and} \quad \tilde{\Delta} \tilde{\varphi} = 0,
\end{equation}
where $\bm \tilde{\bm u}_{aver}$ is the average value of $\tilde{\bm u}$ over $\tilde{x}_3$ by integration, $\tilde{\bm u}_{slip}=\frac{\tilde{\varepsilon}\langle \tilde{\zeta}\rangle}{\tilde{\mu}}\tilde{\nabla}\tilde{\varphi}$ is the depth-averaged Helmholtz-Smoluchowski slip velocity, ${\zeta}_{mean}$ is the arithmetic mean value of the zeta potential on the lower and upper plates, i.e., ${\zeta}_{mean}=(\tilde{\zeta}^L+\tilde{\zeta}^U)/2$. In the later analysis, we assume the zeta potential on the lower and upper plates is same.
Scaling by the characteristic dimensions, we introduce the following dimensionless equations:
\begin{equation}\label{dimensionless-equations}
 \bm u_{aver}  =- \frac{1}{12} \nabla p - \zeta_{mean}  \nabla \varphi,\quad \Delta p = -12 \nabla \varphi \cdot \nabla \zeta_{mean}  \quad \mbox{and} \quad \Delta \varphi = 0,
\end{equation}
where $\bm u_{aver}$, $p$, $\varphi$ and $\zeta_{mean}$ are non-dimensional normalized variables.
\begin{figure}[H]
	\centering  
	\subfigcapskip=-10pt 
	\subfigure[]{
		\includegraphics[width=0.5\linewidth]{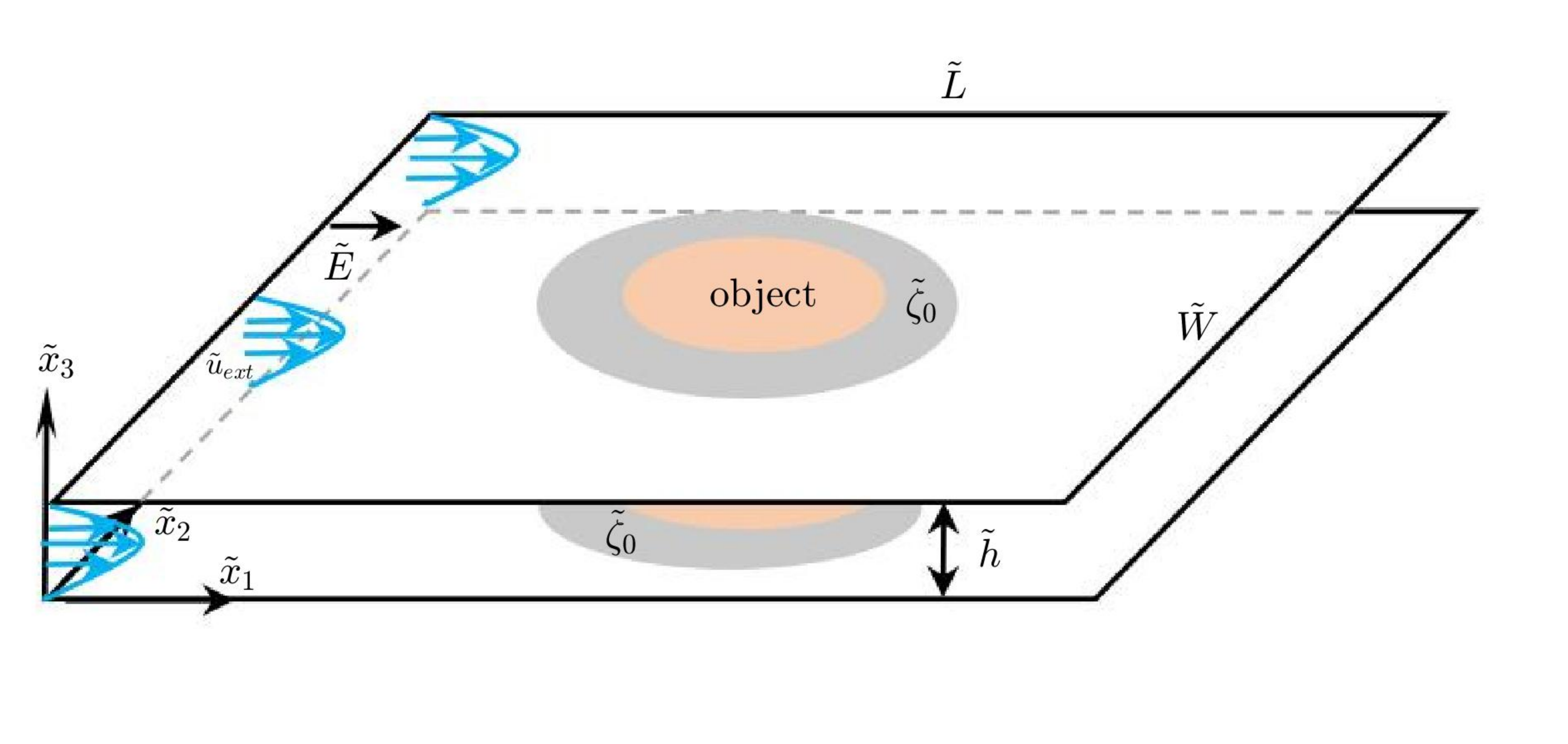}}
	\subfigure[]{
		\includegraphics[width=0.35\linewidth]{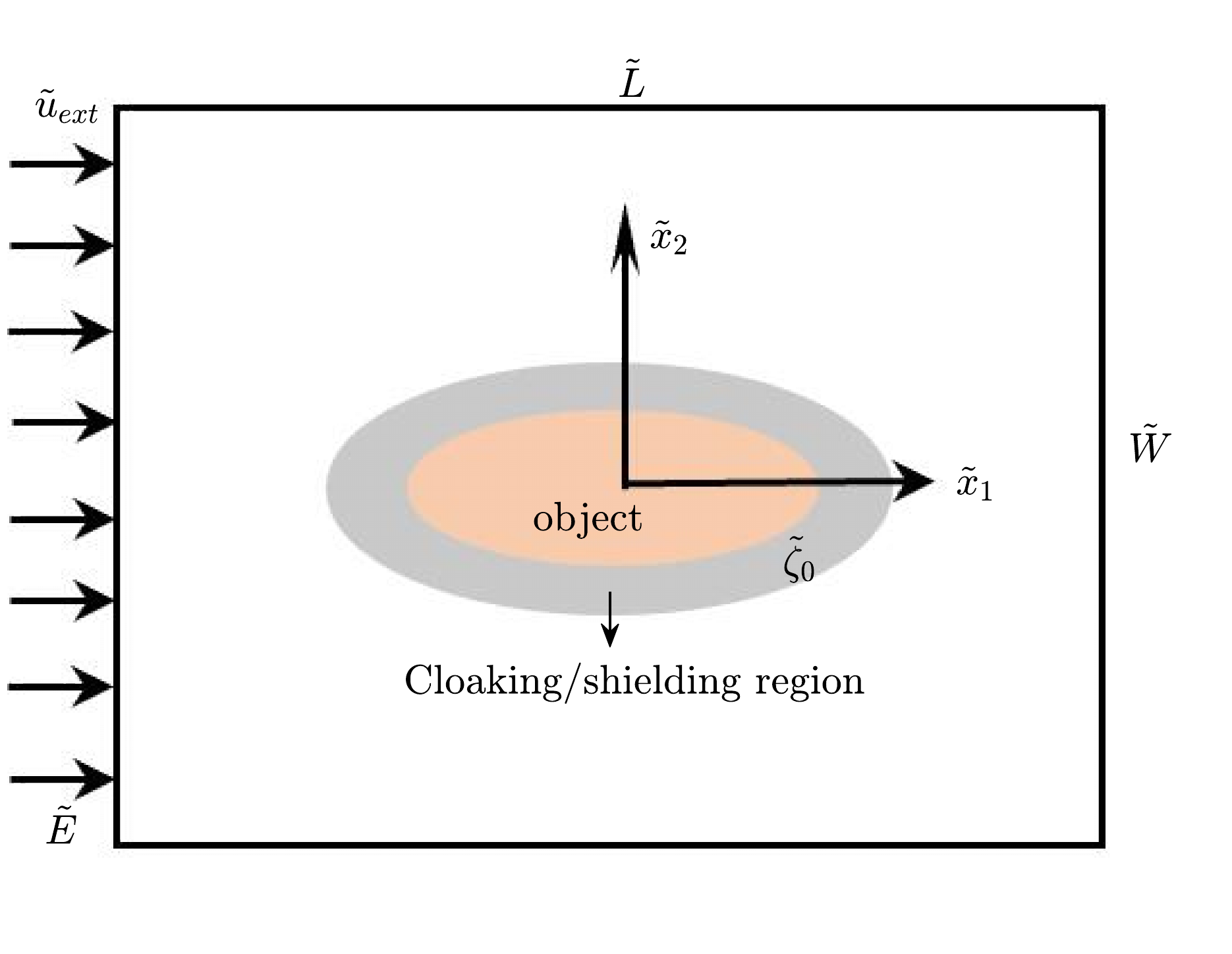}}
	\caption{Schematic illustration of the Hele-Shaw configuration. (a) the three-dimensional electro-osmosis model. (b) the reduced two-dimensional problem.
 }\label{fig-schematic}
\end{figure}

In addition to these equations and the geometry of the flow domain, we also require the boundary conditions that apply at the boundaries of the domain to completely characterize the flow. These boundary conditions include the form and magnitudes of the velocity on the left inlet and right outlet to the domain.
In this paper, we mainly consider a pillar-shaped object with arbitrary cross-sectional shape confined between the walls of a Hele-Shaw cell and subjected to a pressure-driven flow with an externally imposed mean velocity $\tilde{u}_{ext}$ and electric field $\tilde{E}$ along the $\tilde{x}_1$--axis, as shown in Figure \ref{fig-schematic}. The reduced two-dimensional problem is shown in Figure \ref{fig-schematic}(b).
To cloak and shield the object hydrodynamically, we are concerned with the scattering by the object surrounded by a region, i.e., with exterior boundary value problems for the Laplace equation. We solve the problem assuming an unbounded domain, enabled by the fact that the boundaries of the chamber are located far from the cloaking and shielding region.

To mathematically state the problem, let $\Omega$ be a bounded domain in $ \mathbb{R}^2$ and let $D$ (object) be a domain whose closure is contained in $\Omega$. Throughout this paper, we assume that $\Omega$ and $D$ are of class $C^{1,\alpha}$ for some $0<\alpha<1$.  Let $H(x)$ and $P(x)$ be the harmonic function  in $\mathbb{R}^2$, denoting the background electrostatic potential and pressure field. For a given constant parameter $\zeta_0 \in \R$, the  zeta potential distribution in
$\mathbb{R}^2\setminus\overline{D}$ is given by
\begin{align*}
          \zeta_{mean}   =
            \begin{cases}
            \ds \zeta_0 &  \mbox{in } \Omega \setminus\overline{D},\ms\\
            \ds 0 & \mbox{in } \mathbb{R}^2\setminus\overline{\Omega}.
            \end{cases}
\end{align*}
We may consider the configuration as an insulation and no-penetration core coated by the shell (control region) $\Omega \setminus\overline{D}$ with zeta potential $ \zeta_0$.  From the equations \eqref{dimensionless-equations} and the assumption of unbounded domain, the governing equations for non-uniform electro-osmotic flow via a Hele-Shaw configuration is modeled as follows:
\begin{align}\label{electro-osmotic equation}
\begin{cases}
\ds \Delta \varphi = 0  & \mbox{in } \mathbb{R}^2\setminus\overline{D}, \ms \\
\ds \frac{\partial \varphi}{\partial \nu} = 0  & \mbox{on } \partial D, \ms \\
\ds  \varphi = H(x) + O(|x|^{-1}) & \mbox{as } |x|\rightarrow + \infty,\ms\\
\ds \Delta p= 0 & \mbox{in }  \mathbb{R}^2\setminus\overline{D}, \ms\\
\ds \frac{\partial p}{\partial \nu} = 0  & \mbox{on } \partial D, \ms\\
\ds p|_{+}=p|_{-} & \mbox{on }  \partial \Omega,\ms\\
\ds \frac{\partial p}{\partial \nu} \Big|_{+} - \frac{\partial p}{\partial \nu} \Big|_{-} = 12 \zeta_0\frac{\partial \varphi}{\partial \nu} & \mbox{on } \partial \Omega,\ms \\
\ds  p = P(x) + O(|x|^{-1}) & \mbox{as } |x|\rightarrow +\infty,
\end{cases}
\end{align}
where $ \frac{\partial }{\partial \nu}$ denote the outward normal derivative and we use the notation $\frac{\partial p}{\partial \nu}\big|_{\pm}$ indicating
$$
\frac{\partial p}{\partial \nu}\bigg|_{\pm}(x):=\lim_{t\rightarrow 0^+}\langle \nabla p(x\pm t\nu(x)),\nu(x) \rangle, \ \ x\in \p \Omega,
$$
where $\nu$ is the outward unit normal vector to $\p \Omega$.

We are now in a position to introduce the definition of cloaking and shielding which play a central role in this paper.
\begin{defn}\label{def-cloaking}
The triples $\{D, \Omega; \zeta_0\}$ is said to be a perfect hydrodynamic cloaking if
\begin{equation}\label{cond-cloaking}
\bm u_{aver}  = - \nabla P / 12 \quad  \mbox{in }  \mathbb{R}^2\setminus\overline{\Omega},
\end{equation}
where $\bm u_{aver}$ is the dimensionless version of  $\tilde{\bm u}_{aver}$ defined in \eqref{dimension-equations}.
If the notation $"="$ is replaced by $"\approx"$, then it is called a near/approximate hydrodynamic cloaking.
\end{defn}
Let the depth-averaged hydrodynamic force acting on the object be given by
\begin{equation}\label{force}
  \bm F =\int_{\p D}\bm \sigma \cdot \nu \d s(y),
\end{equation}
where $\bm \sigma = -p I$ is  the normalized stress tensor.
\begin{defn}\label{def-shielding}
The triples $\{D, \Omega; \zeta_0\}$ is said to be a perfect  hydrodynamic shielding if
\begin{equation}\label{cond-shielding}
  \bm F = 0 \quad  \mbox{on }  \p D.
\end{equation}
If the notation $"="$ is replaced by $"\approx"$, then it is called a near/approximate hydrodynamic shielding.
\end{defn}

Outside the cloaking region, the pressure is related to the velocity field through $\langle \bm{u} \rangle = -\nabla p/12 $ subjected to the
boundary condition $p(x)=P(x)$ as $|x|\rightarrow\infty$, and therefore, according to the Definition \ref{def-cloaking}, the condition (\ref{cond-cloaking}) can be expressed in terms of the
pressure as
\begin{align}\label{cond-cloaking-p}
p(x)=P(x), \quad x\in \mathbb{R}^2\setminus\overline{\Omega}.
\end{align}
Similarly, from (\ref{force}) it follows that the depth-averaged hydrodynamic force $\bm F$ vanishes provided
\begin{equation}\label{cond-shileding-p}
  p(x) = 0, \quad x\in \Omega\setminus\overline{D}.
\end{equation}
 According to Definition \ref{def-shielding}, hydrodynamic shielding occurs. In this paper, we assume $D$ and $\Omega$ are known, and want to find appropriate zeta potential $\zeta_0$ to achieve the hydrodynamic shielding and cloaking effectively.

Our main results in this paper are given in the following theorems. The proofs are given in Section \ref{subsec-cloaking-annulus} \ref{subsec-cloaking-ellipses} and \ref{subsec-optimal}, respectively.
\begin{thm}\label{main-thm-annulus}
Let the domains $D$ and $\Omega$ be concentric disks of radii $r_i$ and $r_e$, where $r_e>r_i$. Let $H(x) = r^n\e^{\i n \theta}$ and $P(x) = 12 r^n\e^{\i n \theta}$ for $n\geq 1$. If
\begin{equation}\label{annulus-cloaking-zeta}
    \zeta_0=\frac{2r_i^{2n}r_e^{2n}}{r_e^{4n}-r_i^{4n}},
\end{equation}
then the perfect hydrodynamic cloaking condition  (\ref{cond-cloaking}) is satisfied. And if
\begin{equation}\label{annulus-shielding-zeta}
\zeta_0=\frac{2 r_e^{2n}}{r_e^{2n}-r_i^{2n}},
\end{equation}
then the perfect hydrodynamic shielding condition  (\ref{cond-shielding}) is satisfied.
\end{thm}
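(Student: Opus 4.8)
The plan is to exploit the rotational symmetry of the concentric-disk configuration together with the fact that $H$ and $P$ are single Fourier harmonics. Since the $\varphi$-equations in $\eqref{electro-osmotic equation}$ decouple from $p$, one solves for $\varphi$ first; and by uniqueness of solutions to $\eqref{electro-osmotic equation}$ (which follows from the representation formula of Section~\ref{sec-layer-potentials}) and the orthogonality of $\{\e^{\i k\theta}\}_{k\in\mathbb{Z}}$, the fields $\varphi$ and $p$ must carry only the $n$-th angular mode. The exterior transmission problem then collapses to determining a few radial coefficients, and the cloaking/shielding requirements become explicit scalar equations for $\zeta_0$.

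First I would solve the decoupled electrostatic part. Seeking $\varphi(r,\theta) = \bigl(r^{n} + a\,r^{-n}\bigr)\e^{\i n\theta}$ on $\mathbb{R}^2\setminus\overline{D}$ --- the growing term matching $H$ at infinity, the decaying term the only homogeneous harmonic correction compatible with the $O(|x|^{-1})$ decay --- the no-flux condition $\p_\nu\varphi = \p_r\varphi = 0$ at $r = r_i$ forces $a = r_i^{2n}$. Hence $\p_\nu\varphi = n\,r_e^{-n-1}\bigl(r_e^{2n} - r_i^{2n}\bigr)\e^{\i n\theta}$ on $\p\Omega$, which is exactly the datum that drives $p$ through the transmission condition on $\p\Omega$.

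Next I would set up the pressure: write $p = \bigl(\alpha r^{n} + \beta r^{-n}\bigr)\e^{\i n\theta}$ in the shell $\Omega\setminus\overline{D}$ and $p = \bigl(12\,r^{n} + \gamma r^{-n}\bigr)\e^{\i n\theta}$ in $\mathbb{R}^2\setminus\overline{\Omega}$ (the $12\,r^n$ matching $P$). Imposing $\p_\nu p = 0$ at $r = r_i$ gives $\beta = \alpha r_i^{2n}$; continuity of $p$ across $\p\Omega$ gives $\gamma = \alpha\bigl(r_e^{2n} + r_i^{2n}\bigr) - 12\,r_e^{2n}$; and the jump relation $\p_\nu p|_+ - \p_\nu p|_- = 12\zeta_0\,\p_\nu\varphi$ at $r = r_e$, after eliminating $\beta$ and $\gamma$, reduces to the single identity $2\,r_e^{2n}(12 - \alpha) = 12\,\zeta_0\bigl(r_e^{2n} - r_i^{2n}\bigr)$.

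Finally I would read off $\zeta_0$ from the two targets. By $\eqref{cond-cloaking-p}$ perfect cloaking means $\gamma = 0$, i.e.\ $\alpha = 12\,r_e^{2n}/(r_e^{2n} + r_i^{2n})$; substituting into the identity above and simplifying yields $\eqref{annulus-cloaking-zeta}$. By $\eqref{cond-shileding-p}$ perfect shielding means $p \equiv 0$ on $\Omega\setminus\overline{D}$, i.e.\ $\alpha = \beta = 0$; setting $\alpha = 0$ in the identity yields $\eqref{annulus-shielding-zeta}$, and one checks that the resulting exterior field $p = 12\bigl(r^{n} - r_e^{2n} r^{-n}\bigr)\e^{\i n\theta}$ is consistent with every condition in $\eqref{electro-osmotic equation}$. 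The algebra is elementary; the only point that genuinely needs care --- as opposed to formal separation of variables --- is the single-mode reduction, which I expect to dispatch quickly via uniqueness for $\eqref{electro-osmotic equation}$ and Fourier orthogonality, so I would not regard it as a real obstacle.
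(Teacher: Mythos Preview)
Your argument is correct and the algebra checks out; the single-mode reduction is indeed justified by the uniqueness in Theorem~\ref{well-posedness}. However, your route differs from the paper's. The paper does not separate variables directly but instead works through the layer-potential representation of Theorem~\ref{well-posedness}: it computes the densities $\phi$, $\psi_i$, $\psi_e$ explicitly using the known action of $\mathcal{S}_\Gamma$ and $\mathcal{K}^*_\Gamma$ on the Fourier basis (formulas \eqref{S-ra}--\eqref{K-ra}), substitutes these back into \eqref{sol-ep}--\eqref{sol-p}, and obtains the closed-form pressure \eqref{annulus-p} from which the cloaking and shielding values of $\zeta_0$ are read off. Your approach is more elementary and self-contained---no integral operators are needed---while the paper's approach is chosen for uniformity: the same layer-potential machinery is reused verbatim for the confocal-ellipse case in Section~\ref{subsec-cloaking-ellipses}, where a naive separation-of-variables ansatz would require first introducing elliptic coordinates and the associated harmonics anyway. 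In the radial case the two methods are essentially equivalent in difficulty, and your piecewise ansatz with coefficients $\alpha,\beta,\gamma$ is arguably the quicker path to the answer.
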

\begin{rem}
In \cite{Boyko2021}, the authors consider the special case where the background electrostatic potential and pressure field are given by $H(x) = r \cos(\theta)$ and $P(x) = 12r\cos(\theta)$. In fact, the special case is included in Theorem \ref{main-thm-annulus} when $n=1$. Compared with the linear background fields in \cite{Boyko2021}, we extend the background electrostatic potential and pressure field to a more general harmonic function in Theorem \ref{main-thm-annulus}.
\end{rem}

\begin{thm}\label{main-thm-ellipses}
Let the domains $D$ and $\Omega$ be confocal ellipses of elliptic radii $\xi_i$ and $\xi_e$, where $\xi_e>\xi_i$.
\begin{itemize}
    \item Let $H(x) = \cosh (n\xi) \cos (n\eta)$ and $P(x) = 12  \cosh (n\xi) \cos (n\eta)$ for $n\geq 1$. If
    \begin{equation}\label{ellipse-cloaking-zeta-x}
    \zeta_0=\frac{ \sinh (n\xi_i)}{\big(\sinh (n\xi_e) - \e^{n(\xi_i-\xi_e)}\sinh (n\xi_i)\big)\cosh (n(\xi_e -\xi_i))},
    \end{equation}
    then the perfect hydrodynamic cloaking condition  (\ref{cond-cloaking}) is satisfied. If
    \begin{equation}\label{ellipse-shielding-zeta-x}
    \zeta_0=\frac{\e^{n\xi_e}}{\sinh (n\xi_e) - \e^{n(\xi_i-\xi_e)}\sinh (n\xi_i)},
    \end{equation}
    then the perfect hydrodynamic shielding condition  (\ref{cond-shielding}) is satisfied.
    \item Let $H(x) = \sinh (n\xi) \sin (n\eta)$ and $P(x) = 12  \sinh (n\xi) \sin (n\eta)$ for $n\geq 1$. If
    \begin{equation}\label{ellipse-cloaking-zeta-y}
    \zeta_0=\frac{ \cosh (n\xi_i)}{\big(\cosh (n\xi_e) - \e^{n(\xi_i-\xi_e)}\cosh (n\xi_i)\big)\cosh (n(\xi_e -\xi_i))},
    \end{equation}
    then the perfect hydrodynamic cloaking condition  (\ref{cond-cloaking}) is satisfied. If
    \begin{equation}\label{ellipse-shielding-zeta-y}
    \zeta_0=\frac{ \e^{n\xi_e}}{\cosh (n\xi_e) - \e^{n(\xi_i-\xi_e)}\cosh (n\xi_i)},
    \end{equation}
    then the perfect hydrodynamic shielding condition  (\ref{cond-shielding}) is satisfied.
  \end{itemize}
\end{thm}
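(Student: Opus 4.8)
The plan is to transplant the annulus argument into elliptic coordinates. Recall that the map $x_1 + \i x_2 = c\cosh(\xi + \i\eta)$ sends the strip $\{\xi = \text{const}\}$ to confocal ellipses, and that the natural harmonics in this coordinate are $\cosh(n\xi)\cos(n\eta)$, $\sinh(n\xi)\sin(n\eta)$ (bounded-type, the analogues of $r^n$) and $\e^{-n\xi}\cos(n\eta)$, $\e^{-n\xi}\sin(n\eta)$ (decaying-type, the analogues of $r^{-n}$), since in these coordinates the Laplacian separates exactly as in the polar case. I would first solve the electrostatic subproblem: $\varphi$ is harmonic in $\R^2\setminus\overline D$, satisfies $\partial_\nu\varphi = 0$ on $\partial D = \{\xi = \xi_i\}$, and behaves like $H$ at infinity. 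Writing $\varphi = H + (\text{decaying harmonic})$ and matching the single Fourier mode $\cos(n\eta)$ (resp. $\sin(n\eta)$), the Neumann condition on the ellipse $\xi = \xi_i$ — which in elliptic coordinates is just $\partial_\xi\varphi = 0$ there, because the scale factors are equal in $\xi$ and $\eta$ — pins down the coefficient of the decaying mode uniquely. This gives an explicit closed form for $\varphi$ and in particular for $\partial_\nu\varphi\big|_{\partial\Omega}$ on $\xi = \xi_e$.

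Next I would solve the pressure subproblem. Here $p$ is harmonic separately in $\Omega\setminus\overline D$ and in $\R^2\setminus\overline\Omega$, continuous across $\partial\Omega$, with a jump in the normal derivative equal to $12\zeta_0\,\partial_\nu\varphi$ on $\partial\Omega$, with $\partial_\nu p = 0$ on $\partial D$, and with $p \to P$ at infinity. Because $H$ and $P = 12H$ excite only the single mode $n$, the same mode structure propagates: write $p = a\cosh(n\xi)\cos(n\eta) + b\,\e^{-n\xi}\cos(n\eta)$ in the shell and $p = P + d\,\e^{-n\xi}\cos(n\eta)$ in the exterior (resp. with $\sin$ and the $\sinh/\cosh$ swap for the second item). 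The three interface/boundary relations — Neumann on $\xi = \xi_i$, continuity on $\xi = \xi_e$, and the prescribed jump on $\xi = \xi_e$ involving the already-computed $\partial_\nu\varphi$ — form a $3\times 3$ linear system for $(a,b,d)$. Perfect cloaking (\ref{cond-cloaking-p}) is exactly $d = 0$; imposing $d = 0$ overdetermines the system, and solving for the value of $\zeta_0$ that makes it consistent yields (\ref{ellipse-cloaking-zeta-x}) (resp. (\ref{ellipse-cloaking-zeta-y})). Perfect shielding (\ref{cond-shileding-p}) is $p \equiv 0$ in the shell, i.e. $a = b = 0$; the continuity and jump conditions then force a specific $\zeta_0$, namely (\ref{ellipse-shielding-zeta-x}) (resp. (\ref{ellipse-shielding-zeta-y})). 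One should also double-check that the force integral (\ref{force}) indeed vanishes when $p\equiv 0$ on $\partial D$, which is immediate from $\bm\sigma = -pI$.

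The routine part is the linear algebra; the genuine care is needed in two places. First, the boundary conditions are most naturally expressed via layer potentials (as the representation formula of Section \ref{sec-layer-potentials} suggests), so I would want to confirm that on confocal ellipses the single- and double-layer operators act diagonally on the elliptic harmonics $\e^{\pm n\xi}\cos(n\eta)$, $\e^{\pm n\xi}\sin(n\eta)$ — this is the elliptic analogue of the classical fact that on a disk they diagonalize on $r^{\pm n}\e^{\i n\theta}$, and it is what guarantees that no other Fourier modes are generated, so the ansatz above is complete. Establishing this diagonalization (or, equivalently, justifying directly that the unique decaying harmonic completion stays in a single mode) is the main obstacle. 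Second, one must be careful that the outward normal derivative $\partial_\nu$ on the physical ellipse equals $h_\xi^{-1}\partial_\xi$ with the elliptic scale factor $h_\xi = h_\xi(\xi,\eta)$, which is \emph{not} constant along the ellipse; however, since the conditions $\partial_\nu\varphi = 0$, $\partial_\nu p = 0$ and the jump condition are all \emph{homogeneous} in $h_\xi$ or match a common factor on the two sides (both one-sided normal derivatives on $\partial\Omega$ carry the same $h_\xi^{-1}$), the scale factor cancels throughout and the mode-by-mode reduction goes through cleanly. With those two points checked, the explicit formulas follow by the same elimination as in Theorem \ref{main-thm-annulus}.
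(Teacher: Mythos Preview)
Your proposal is correct and the algebra you outline will indeed produce the formulas (\ref{ellipse-cloaking-zeta-x})--(\ref{ellipse-shielding-zeta-y}). The route, however, differs in presentation from the paper's. The paper does not set up a piecewise harmonic ansatz with undetermined coefficients and a $3\times 3$ system; instead it invokes the global single-layer representation of Theorem~\ref{well-posedness}, writing $\varphi = H + \mathcal{S}_{\Gamma_i}[\phi]$ and $p = P + \mathcal{S}_{\Gamma_i}[\psi_i] + \mathcal{S}_{\Gamma_e}[\psi_e]$, and then solves for the densities $\phi,\psi_i,\psi_e$ using the known diagonal action of $\mathcal{S}_\Gamma$ and $\mathcal{K}^*_\Gamma$ on the elliptic harmonics $\beta_n^{c,a},\beta_n^{s,a}$ (the formulas (\ref{S-ellipse-cos})--(\ref{K-ellipse}), quoted from the literature). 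That diagonalization is precisely the ``main obstacle'' you flagged, and the paper disposes of it by citation rather than derivation.

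What each approach buys: your direct separation-of-variables argument is more elementary and self-contained --- it needs no layer-potential machinery and no appeal to the NP-operator spectrum, only the observation that the scale factor $\gamma(\xi,\eta)$ cancels across all the boundary and transmission conditions (which you correctly isolate). The paper's layer-potential route is less direct for this concrete computation but ties the confocal-ellipse case into the same representation formula (\ref{sol-ep})--(\ref{density-equation}) used for the annulus and, in principle, for general $C^{1,\alpha}$ domains, giving a uniform framework. Both collapse to the same single-mode linear algebra and the same closed-form $p$ in (\ref{ellipse-p-dir-x})--(\ref{ellipse-p-dir-y}).
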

\begin{rem}
If we change simultaneously the sign of the background field $H(x)$ and $P(x)$, then the cloaking and shielding structures will not be changed. However, if we only change the sign of one of them, then we need to change the sign of $\zeta_0$ such that the cloaking and shielding occur.
Further, we notice that
\begin{equation*}
  \sinh (n\xi_e) - \e^{n(\xi_i-\xi_e)}\sinh (n\xi_i) =\cosh (n\xi_e) - \e^{n(\xi_i-\xi_e)}\cosh (n\xi_i),
\end{equation*}
which means the shielding structure is not changed in these cases. In Section \ref{sec-NumSim}, the observation is verified numerically when $n=1$.
\end{rem}

For the arbitrary-geometry for $D$ and $\Omega$, by using the optimization approach, we shall establish a sufficient condition for the occurrence of approximate hydrodynamic cloaking (or shielding) in the following theorem.
\begin{thm}\label{thm-estimation}
Let $p$ be the solution to (\ref{electro-osmotic equation}) with $p|_{+}=P$ on $\p \Omega$ (or $p|_{-}=0$ on $\p \Omega$), and the optimization functional $\mathcal{F}(\zeta_{0,opt})$, $\mathcal{G}(\zeta_{0,opt})$ are defined by
(\ref{cost-functional-cloaking}), (\ref{cost-functional-shielding}) respectively.
If there exists an optimal zeta potential  $\zeta_{0,opt}$ such that $\mathcal{F}(\zeta_{0,opt})<\epsilon^2$ (or $\mathcal{G}(\zeta_{0,opt})<\epsilon^2$) where $\epsilon\ll 1$, then the approximate hydrodynamic cloaking (or shielding) occurs, that is, $|p - P|<\epsilon$ in $\R^2 \setminus \overline{\Omega}$ (or $|p|<\epsilon$ in $\Omega \setminus \overline{D}$).
\end{thm}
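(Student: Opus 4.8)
The plan is to establish \emph{a priori} estimates that convert smallness of the least-squares mismatch on the boundary $\partial\Omega$ into smallness of $p-P$ (respectively $p$) in the relevant exterior (respectively interior) region. The starting point is that, by construction, the optimization functionals $\mathcal{F}$ and $\mathcal{G}$ measure exactly the residual in the boundary conditions on $\partial\Omega$ that would force perfect cloaking (namely $p|_+ = P$ via \eqref{cond-cloaking-p}) or perfect shielding (namely $p|_- = 0$ via \eqref{cond-shileding-p}). Concretely, with $p$ the solution of \eqref{electro-osmotic equation}, the difference $v := p - P$ satisfies $\Delta v = 0$ in $\R^2\setminus\overline{\Omega}$, decays like $O(|x|^{-1})$ at infinity, and has boundary trace $v|_{\partial\Omega}$ whose $L^2(\partial\Omega)$-norm (or the norm used to define $\mathcal{F}$) is precisely $\mathcal{F}(\zeta_{0,opt})^{1/2} < \epsilon$. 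First I would invoke the standard well-posedness and trace theory for the exterior Dirichlet problem for the Laplacian in the plane with decay at infinity — this is available from the layer-potential machinery set up in Section~\ref{sec-layer-potentials}, since $v$ can be represented through a single-layer potential on $\partial\Omega$ whose density is controlled by the Dirichlet data. The key inequality is a continuity bound of the form $\|v\|_{L^\infty(\R^2\setminus\overline{\Omega})} \le C\,\|v\|_{H^{1/2}(\partial\Omega)} \le C'\,\|v\|_{L^2(\partial\Omega)}$ on the $C^{1,\alpha}$ boundary, with $C'$ depending only on $\Omega$; choosing the functional $\mathcal{F}$ so that it dominates $\|v\|_{L^2(\partial\Omega)}^2$ (or adjusting $\epsilon$ by the constant $C'$) then gives $|p-P|<\epsilon$ in $\R^2\setminus\overline{\Omega}$.

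The shielding case is handled symmetrically but now on the bounded annular region $\Omega\setminus\overline{D}$: here $p$ itself is harmonic in $\Omega\setminus\overline{D}$, satisfies the homogeneous Neumann condition $\partial_\nu p = 0$ on $\partial D$, and has trace on $\partial\Omega$ whose norm is $\mathcal{G}(\zeta_{0,opt})^{1/2}<\epsilon$. I would apply the maximum principle (or, equivalently, the energy estimate combined with the Neumann condition on the inner boundary, then a Sobolev/Poincaré-type bound on the annulus) to conclude that $p$ is controlled throughout $\Omega\setminus\overline{D}$ by its Dirichlet data on $\partial\Omega$ alone — the no-flux condition on $\partial D$ is exactly what prevents the inner boundary from contributing, so one obtains $\|p\|_{L^\infty(\Omega\setminus\overline{D})} \le C\,\|p\|_{L^2(\partial\Omega)} < C\epsilon$. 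A clean route is: multiply $\Delta p = 0$ by $p$, integrate by parts over $\Omega\setminus\overline{D}$, use $\partial_\nu p = 0$ on $\partial D$ to kill that boundary term, bound the remaining $\partial\Omega$ term by the trace and a duality/Rellich estimate, obtain an $H^1$ bound, and upgrade to $L^\infty$ by elliptic regularity and the $C^{1,\alpha}$ regularity of the boundaries. Since \eqref{force} shows $\bm F$ depends only on $p|_{\partial D}$, smallness of $p$ in $\Omega\setminus\overline{D}$ immediately yields smallness of $\bm F$, i.e.\ approximate shielding in the sense of Definition~\ref{def-shielding}.

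I expect the only genuine subtlety to be \textbf{making the constants in these trace/regularity inequalities explicit and geometry-independent enough} that the implication "$\mathcal{F}(\zeta_{0,opt})<\epsilon^2 \Rightarrow |p-P|<\epsilon$" holds with the same $\epsilon$ rather than $C\epsilon$ — this is really just a matter of how the functionals $\mathcal{F}, \mathcal{G}$ are normalized in \eqref{cost-functional-cloaking}--\eqref{cost-functional-shielding}, and I would state the theorem so that the normalization absorbs the stability constant, or equivalently phrase the conclusion as $|p-P| \le C\epsilon$. A secondary point requiring a line of care is the behavior at infinity in the cloaking case: one must confirm that the exterior harmonic function $v=p-P$ with the prescribed $O(|x|^{-1})$ decay and given Dirichlet trace is unique and represented by a layer potential whose $L^\infty$ norm is governed by the trace — in two dimensions this uses the decay condition to rule out the additive-constant/logarithmic ambiguity, which is precisely why the problem \eqref{electro-osmotic equation} was posed with the $O(|x|^{-1})$ normalization in the first place. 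Everything else — the energy identity, the maximum principle, elliptic regularity up to a $C^{1,\alpha}$ boundary — is standard and can be quoted.
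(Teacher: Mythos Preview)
Your proposal rests on a misreading of what $\mathcal{F}$ and $\mathcal{G}$ actually measure. By \eqref{cost-functional-cloaking} and \eqref{cost-functional-shielding} these functionals are $L^2(\partial\Omega)$-norms of \emph{normal-derivative} mismatches, not of Dirichlet traces: $\mathcal{F}(\zeta_0)=\|\partial_\nu P-\partial_\nu p-12\zeta_0\,\partial_\nu\varphi\|^2_{L^2(\partial\Omega)}$ with $p$ the solution of the interior mixed problem \eqref{p-cloaking-Dirichlet}, and similarly $\mathcal{G}$ involves $\partial_\nu p$ for the exterior problem \eqref{press-shielding-Dirichlet}. Moreover the theorem's hypotheses already force $v|_{\partial\Omega}=(p-P)|_{+}=0$ in the cloaking case and $p|_{-}=0$ in the shielding case, so the Dirichlet traces you propose to control are identically zero; there is nothing to estimate there, and your exterior Dirichlet stability / maximum-principle argument as written is vacuous.

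What actually needs to be done is one step you skipped: use the transmission condition $\partial_\nu p|_{+}-\partial_\nu p|_{-}=12\zeta_0\,\partial_\nu\varphi$ from \eqref{electro-osmotic equation} to convert the smallness of $\mathcal{F}(\zeta_{0,opt})$ into smallness of the \emph{Neumann} datum $\|\partial_\nu P-\partial_\nu p|_{+}\|_{L^2(\partial\Omega)}$ (and of $\|\partial_\nu p|_{-}\|_{L^2(\partial\Omega)}$ for shielding). With the Dirichlet trace already zero by hypothesis, the paper then writes $p-P$ (resp.\ $p$) via the Green representation formula in $\R^2\setminus\overline{\Omega}$ (resp.\ in $\Omega\setminus\overline{D}$): the double-layer term vanishes and only a single-layer term against the small Neumann datum remains, which is bounded pointwise by Cauchy--Schwarz. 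Your energy/maximum-principle route could be adapted once you have the correct Neumann smallness in hand, but as stated it targets the wrong boundary quantity; in particular, a harmonic function with zero Dirichlet trace on $\partial\Omega$ and zero Neumann trace on $\partial D$ is not forced to be small without also controlling $\partial_\nu p$ on $\partial\Omega$, which is precisely what $\mathcal{G}$ delivers through the transmission identity.
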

\begin{rem}
It's worth noting that, for general geometry, the perfect hydrodynamic cloaking and shielding are usually difficult to achieve. It is more closely related to the corresponding overdetermined boundary value problem (see Section \ref{subsec-optimal} for details).
\end{rem}

\section{Layer potentials formulation}\label{sec-layer-potentials}
In this section, we first collect some preliminary knowledge on boundary layer potentials and then establish the representation formula of solution of the governing equations.
Let $\Gamma_i := \p D$ and $\Gamma_e := \p \Omega$. For $\Gamma = \Gamma_i$  or $\Gamma_e$, let us now introduce the single-layer potential by
\begin{align*}
\mathcal{S}_\Gamma[\vartheta](x) :=\int_{\Gamma}G(x,y)\vartheta(y)\d s(y), \quad  x\in \mathbb{R}^2,
\end{align*}
where $\vartheta\in L^2(\Gamma)$ is the density function, and the Green function $G(x,y)$ to the Laplace in  $\mathbb{R}^2$ is given by
\begin{align*}
G(x,y)=\frac{1}{2\pi}\ln|x-y|.
\end{align*}
Then the following jump relation holds :
\begin{align}
\label{jump-relation}
\frac{\partial\mathcal{S}_\Gamma[\vartheta]}{\partial \nu}\bigg|_{\pm}(x)&=\Big(\pm\frac{1}{2}I+\mathcal{K}^*_\Gamma\Big)[\vartheta](x), \ \ x\in \Gamma,
\end{align}
where $\mathcal{K}^*_{\Gamma}$ denote Neumann-Poincar$\acute{e}$ (NP) operator defined by
\begin{align*}
\mathcal{K}_\Gamma^*[\vartheta](x)=\int_{\Gamma}\frac{\partial G(x,y)}{\partial \nu (x)}\vartheta(y)\d s(y).
\end{align*}

To establish the representation formula of the solution, we make use of the following lemma.
\begin{lem}\label{I+K-invertible}\cite{Ammari2007}
The operator $\frac{1}{2} I+\mathcal{K}^*_{\Gamma_i}: L_0^2(\Gamma_i)\rightarrow L_0^2(\Gamma_i)$ is invertible. Here $L_0^2 := \{f\in L^2(\Gamma_i); \int_{\Gamma_i} f \d s=0\}$.
\end{lem}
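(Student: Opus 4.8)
The plan is to combine elementary Fredholm theory with a uniqueness argument for the exterior Neumann problem. Since $\Gamma_i=\partial D$ is of class $C^{1,\alpha}$, the kernel of the Neumann--Poincar\'e operator, $\partial G(x,y)/\partial\nu(x)=\frac{1}{2\pi}\langle x-y,\nu(x)\rangle/|x-y|^2$, obeys $|\langle x-y,\nu(x)\rangle|\le C|x-y|^{1+\alpha}$ on $\Gamma_i$, so it is $O(|x-y|^{\alpha-1})$, i.e.\ weakly singular; hence $\mathcal{K}^*_{\Gamma_i}$ is compact on $L^2(\Gamma_i)$ and $\frac12 I+\mathcal{K}^*_{\Gamma_i}$ is Fredholm of index zero on $L^2(\Gamma_i)$. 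I would also record the mean-value identity $\int_{\Gamma_i}\mathcal{K}^*_{\Gamma_i}[\vartheta]\,\d s=\frac12\int_{\Gamma_i}\vartheta\,\d s$, obtained by Fubini together with the divergence-theorem computation $\int_{\Gamma_i}\partial G(x,y)/\partial\nu(x)\,\d s(x)=\frac12$ for $y\in\Gamma_i$. This shows at once that $\int_{\Gamma_i}(\frac12 I+\mathcal{K}^*_{\Gamma_i})[\vartheta]\,\d s=\int_{\Gamma_i}\vartheta\,\d s$, so the operator maps $L^2_0(\Gamma_i)$ into itself and every element of its kernel automatically lies in $L^2_0(\Gamma_i)$. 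Since the operator has Fredholm index zero, it then suffices to prove injectivity on $L^2(\Gamma_i)$.

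For injectivity, suppose $(\frac12 I+\mathcal{K}^*_{\Gamma_i})[\vartheta]=0$ and set $u:=\mathcal{S}_{\Gamma_i}[\vartheta]$. Then $u$ is harmonic in $\mathbb{R}^2\setminus\overline D$ and, by the jump relation \eqref{jump-relation}, $\partial u/\partial\nu|_{+}=0$ on $\Gamma_i$. By the identity above $\int_{\Gamma_i}\vartheta\,\d s=0$, so the leading $\frac{1}{2\pi}\big(\int_{\Gamma_i}\vartheta\big)\ln|x|$ term in the far-field expansion of the single-layer potential vanishes and $u(x)=O(|x|^{-1})$, $\nabla u(x)=O(|x|^{-2})$ as $|x|\to\infty$. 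Applying Green's identity to $|\nabla u|^2$ on $B_R\setminus\overline D$, the boundary term on $\Gamma_i$ drops because $\partial u/\partial\nu|_{+}=0$ and the term on $\partial B_R$ is $O(R^{-1})$, so letting $R\to\infty$ gives $\nabla u\equiv0$, hence $u\equiv0$ in $\mathbb{R}^2\setminus\overline D$. Continuity of the single-layer potential across $\Gamma_i$ gives $u=0$ on $\Gamma_i$; since $u$ is harmonic in $D$ with zero Dirichlet data, $u\equiv0$ in $D$ and thus $\partial u/\partial\nu|_{-}=0$. The jump formula $\partial u/\partial\nu|_{+}-\partial u/\partial\nu|_{-}=\vartheta$ then yields $\vartheta=0$. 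Hence $\frac12 I+\mathcal{K}^*_{\Gamma_i}$ is injective, therefore invertible on $L^2(\Gamma_i)$, and restricting to the invariant subspace $L^2_0(\Gamma_i)$ finishes the proof.

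I expect the only genuinely delicate point to be the behaviour at infinity in two dimensions: the single-layer potential carries a logarithmic term with coefficient $\frac{1}{2\pi}\int_{\Gamma_i}\vartheta$, so the zero-mean reduction must be invoked \emph{before} the energy estimate, and one needs the correct uniqueness statement for the exterior Neumann problem in $\mathbb{R}^2$ (uniqueness within the class of solutions that decay at infinity). All remaining ingredients — compactness of $\mathcal{K}^*_{\Gamma_i}$ on a $C^{1,\alpha}$ curve, the jump relations, the mapping properties of $\mathcal{S}_{\Gamma_i}$, and interior Dirichlet uniqueness via the maximum principle — are standard layer-potential facts, so I would simply cite \cite{Ammari2007} for them rather than reprove them.
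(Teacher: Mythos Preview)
Your argument is correct and is essentially the classical proof of this result as found in the cited reference \cite{Ammari2007}; note, however, that the paper itself does not prove Lemma~\ref{I+K-invertible} at all---it is quoted directly from \cite{Ammari2007} without argument, so there is no ``paper's own proof'' to compare against. Your Fredholm-plus-uniqueness approach (compactness of $\mathcal{K}^*_{\Gamma_i}$ from the weak singularity on a $C^{1,\alpha}$ curve, the mean-value identity to reduce to $L^2_0$, and injectivity via the exterior Neumann and interior Dirichlet problems for $\mathcal{S}_{\Gamma_i}[\vartheta]$) is exactly the standard route in that reference, and your handling of the two-dimensional logarithmic tail is done in the right order.
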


By the layer potential theory, we can obtain the following theorem using Lemma \ref{I+K-invertible}.
\begin{thm}\label{well-posedness}
Let $\varphi, p\in C^2(\mathbb{R}^2\setminus\overline{D})\bigcap C(\mathbb{R}^2\setminus D)$ be the classical solution to (\ref{electro-osmotic equation}).
Then $\varphi$ can be represented as
\begin{equation}\label{sol-ep}
\varphi =
H(x) + \mathcal{S}_{\Gamma_i}[\phi](x),\quad x\in\mathbb{R}^2\setminus\overline{D},
\end{equation}
where  density function $ \phi \in L_0^2(\Gamma_i )$ satisfies
\begin{align}\label{e-potential-density}
\Big(\frac{1}{2} I+\mathcal{K}^*_{\Gamma_i}\Big)[\phi]=-\frac{\partial H}{\partial \nu}\Big|_{\Gamma_i}.
\end{align}
And $p$ can be represented using the single-layer potentials $S_{\Gamma_i}$ and $S_{\Gamma_e}$ as follows:
 \begin{equation}\label{sol-p}
p = P(x) +\mathcal{S}_{\Gamma_i}[\psi_i](x) + \mathcal{S}_{\Gamma_e}[\psi_e](x),\quad  x\in  \mathbb{R}^2\setminus\overline{D},
\end{equation}
where the pair $(\psi_i, \psi_e)\in L_0^2(\Gamma_i)\times L_0^2(\Gamma_e)$ satisfy
\begin{align}
\label{density-equation}
\begin{cases}
\ds \Big(\frac{1}{2} I+\mathcal{K}^*_{\Gamma_i}\Big)[\psi_i]
+\frac{\partial\mathcal{S}_{\Gamma_e}[\psi_e]}{\partial\nu_i}= -\frac{\p P}{\p \nu_i} &\quad  \mbox{on }\Gamma_i, \ms\\
\ds \psi_e =12 \zeta_0\frac{\partial \varphi}{\partial \nu_e} &\quad  \mbox{on }\Gamma_e. \
\end{cases}
\end{align}
Furthermore, there exists a constant $C=C(\zeta_0,D,\Omega)$ such that
\begin{align}
\label{stability}
\|\psi_i\|_{L^2(\Gamma_i)}+\|\psi_e\|_{L^2(\Gamma_e )}
\leq C \big(\| \nabla P \|_{L^2(\Gamma_i)}+\left\|\nabla \varphi \right\|_{L^2(\Gamma_e)}\big).
\end{align}
\end{thm}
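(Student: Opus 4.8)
The plan is to construct the solution via layer potentials and then invoke uniqueness for the exterior problem. First I would treat the electrostatic potential $\varphi$: the ansatz $\varphi = H(x) + \mathcal{S}_{\Gamma_i}[\phi](x)$ is harmonic in $\mathbb{R}^2\setminus\overline{D}$ for any $\phi\in L^2(\Gamma_i)$, and since $H$ is harmonic in all of $\mathbb{R}^2$ while $\mathcal{S}_{\Gamma_i}[\phi](x) = O(|x|^{-1})$ as $|x|\to\infty$ precisely when $\phi\in L_0^2(\Gamma_i)$, the decay condition $\varphi = H + O(|x|^{-1})$ is automatic in that class. Imposing the Neumann condition $\partial\varphi/\partial\nu = 0$ on $\partial D$ and using the interior-trace jump relation \eqref{jump-relation} from the exterior side (normal pointing out of $D$, so the $+$ trace) gives $\bigl(\tfrac12 I + \mathcal{K}^*_{\Gamma_i}\bigr)[\phi] = -\partial H/\partial\nu|_{\Gamma_i}$. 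The right-hand side lies in $L_0^2(\Gamma_i)$ because $\int_{\Gamma_i}\partial H/\partial\nu\,ds = \int_D \Delta H\,dx = 0$, so Lemma \ref{I+K-invertible} yields a unique $\phi\in L_0^2(\Gamma_i)$; this settles \eqref{sol-ep}--\eqref{e-potential-density}, and uniqueness of $\varphi$ itself follows from the standard maximum-principle argument for the exterior Neumann problem with prescribed behavior at infinity.

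Next I would handle the pressure $p$ with the two-interface ansatz \eqref{sol-p}. Again $p - P$ is harmonic away from $\Gamma_i\cup\Gamma_e$ and decays like $O(|x|^{-1})$ when $(\psi_i,\psi_e)\in L_0^2(\Gamma_i)\times L_0^2(\Gamma_e)$. The transmission condition on $\Gamma_e$ is the easy one: $\mathcal{S}_{\Gamma_i}[\psi_i]$ is $C^1$ across $\Gamma_e$, so $p|_+ = p|_-$ holds automatically, while the flux jump $\partial p/\partial\nu|_+ - \partial p/\partial\nu|_- = 12\zeta_0\,\partial\varphi/\partial\nu$ on $\Gamma_e$ reduces, via \eqref{jump-relation} for $\mathcal{S}_{\Gamma_e}$ (the $H$ and $\mathcal{S}_{\Gamma_i}$ pieces being smooth there), to the algebraic identity $\psi_e = 12\zeta_0\,\partial\varphi/\partial\nu_e$, which is exactly the second line of \eqref{density-equation} and simply \emph{defines} $\psi_e$ in terms of the already-determined $\varphi$. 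One checks $\psi_e\in L_0^2(\Gamma_e)$ since $\int_{\Gamma_e}\partial\varphi/\partial\nu\,ds = 0$ ($\varphi$ being harmonic in $\Omega\setminus\overline{D}$ with vanishing flux on $\Gamma_i$). Substituting this known $\psi_e$ into the Neumann condition on $\Gamma_i$ and using \eqref{jump-relation} once more gives the first line of \eqref{density-equation}: $\bigl(\tfrac12 I + \mathcal{K}^*_{\Gamma_i}\bigr)[\psi_i] = -\partial P/\partial\nu_i - \partial\mathcal{S}_{\Gamma_e}[\psi_e]/\partial\nu_i$. The right-hand side is in $L_0^2(\Gamma_i)$ (the $\partial P$ term integrates to $0$ as before, and $\mathcal{S}_{\Gamma_e}[\psi_e]$ is harmonic in a neighborhood of $\overline{D}$ so its normal flux over $\Gamma_i$ vanishes), and Lemma \ref{I+K-invertible} delivers a unique $\psi_i\in L_0^2(\Gamma_i)$.

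Finally, the stability estimate \eqref{stability}. From the second equation, $\|\psi_e\|_{L^2(\Gamma_e)} \le 12|\zeta_0|\,\|\nabla\varphi\|_{L^2(\Gamma_e)}$ directly. For $\psi_i$, I would apply the bounded inverse $\bigl(\tfrac12 I + \mathcal{K}^*_{\Gamma_i}\bigr)^{-1}$ on $L_0^2(\Gamma_i)$ to the first equation, bounding $\|\partial P/\partial\nu_i\|_{L^2(\Gamma_i)} \le \|\nabla P\|_{L^2(\Gamma_i)}$ and using the smoothing/mapping bound $\|\partial\mathcal{S}_{\Gamma_e}[\psi_e]/\partial\nu_i\|_{L^2(\Gamma_i)} \le C\|\psi_e\|_{L^2(\Gamma_e)}$, which holds because $\mathrm{dist}(\Gamma_i,\Gamma_e)>0$ makes the kernel $\partial G(x,y)/\partial\nu(x)$ smooth for $x\in\Gamma_i$, $y\in\Gamma_e$; chaining these and absorbing constants into $C=C(\zeta_0,D,\Omega)$ gives \eqref{stability}. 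The only genuinely delicate point is the interplay of the $L_0^2$ constraints — one must verify at each step that the data fed into Lemma \ref{I+K-invertible} indeed has zero mean, and that the single-layer decay $O(|x|^{-1})$ is consistent with (not over-constrained by) the prescribed far-field terms $H$ and $P$; everything else is bookkeeping with the jump relations and the fixed positive separation of the two boundaries.
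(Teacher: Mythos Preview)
Your proposal is correct and follows essentially the same route as the paper: layer-potential ansatz, jump relations \eqref{jump-relation} to reduce the boundary/transmission conditions to \eqref{e-potential-density} and \eqref{density-equation}, and Lemma \ref{I+K-invertible} for unique solvability. The only substantive difference is in justifying \eqref{stability}: the paper invokes solvability plus the closed graph theorem, whereas you give a direct quantitative bound using the explicit form of $\psi_e$, the bounded inverse of $\tfrac12 I+\mathcal{K}^*_{\Gamma_i}$ on $L_0^2$, and smoothness of the off-diagonal kernel from $\mathrm{dist}(\Gamma_i,\Gamma_e)>0$; your argument is more explicit and also supplies the $L_0^2$ verifications that the paper leaves implicit.
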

\begin{proof}
Using the jump formula \eqref{jump-relation} for the normal derivative of the single layer potential, the boundary condition on the boundary $\Gamma_i$ satisfied by \eqref{sol-ep} reads \eqref{e-potential-density}.
Thus by Lemma \ref{I+K-invertible}, the density function $\phi$ exists uniquely. Similarly, the boundary and transmission conditions along the boundary $\Gamma_i$ and interface $\Gamma_e$ satisfied by \eqref{sol-p} read \eqref{density-equation}. The density function $\psi_e$ was already expressed by the normal derivative of the electrostatic potential. Hence the density function $\psi_e$ exists uniquely. By Lemma \ref{I+K-invertible}, the density function $\psi_i$ also exists uniquely. The estimate \eqref{stability} is a consequence of the solvability and the closed graph theorem.
The proof is complete.
\end{proof}

The following corollary is a direct consequence of Theorem \ref{well-posedness}, and thus its proof is skipped.
\begin{cor}\label{cor-stability}
Let $\varphi$ and $p$ be the classical solution to (\ref{electro-osmotic equation}). Then there exists a positive constant $C=C(\zeta_0,D,\Omega)$ such that
\begin{align*}
  \|\varphi\|_{H_{loc}^1(\R^2\setminus\overline{D})}\leq C\|H\|_{H^1(\R^2\setminus\overline{D})},
 \end{align*}
 and
\begin{align*}
  \|p\|_{H_{loc}^1(\R^2\setminus\overline{D})}\leq C\big(\|P\|_{H^1(\R^2\setminus\overline{D})} + \|H\|_{H^1(\R^2\setminus\overline{D})}\big).
 \end{align*}
\end{cor}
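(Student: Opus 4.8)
The plan is to read off both estimates from Theorem~\ref{well-posedness} by bookkeeping: combine the representation formulas \eqref{sol-ep}, \eqref{sol-p} with the $L^2_0$-solvability in Lemma~\ref{I+K-invertible} and the stability bound \eqref{stability}. The only analytic inputs are the standard mapping properties of the single-layer potential and a trace bound for the (smooth) background fields, which I record first. For $\Gamma=\Gamma_i$ or $\Gamma_e$ and $\vartheta\in L^2_0(\Gamma)$ one has $\|\mathcal{S}_\Gamma[\vartheta]\|_{H^1_{loc}(\R^2\setminus\overline D)}\le C\|\vartheta\|_{L^2(\Gamma)}$ with $C=C(D,\Omega)$ (and, on each fixed bounded region, $C$ also depending on that region): on bounded pieces this is the classical $L^2(\Gamma)\to H^{3/2}$ mapping property of $\mathcal{S}_\Gamma$, while near infinity the mean-zero condition forces $\mathcal{S}_\Gamma[\vartheta](x)=O(|x|^{-1})$ and $\nabla\mathcal{S}_\Gamma[\vartheta](x)=O(|x|^{-2})$, so the exterior tail also carries a finite $H^1$-norm. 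Since $H$ and $P$ are harmonic, they are $C^\infty$ (in particular $H^2_{loc}$), so their gradients restrict to $\Gamma_i$ and $\Gamma_e$, and by the trace theorem together with interior elliptic estimates one has $\|\nabla H\|_{L^2(\Gamma_i)}+\|\nabla H\|_{L^2(\Gamma_e)}\le C\|H\|_{H^1(\R^2\setminus\overline D)}$, and similarly for $P$.

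\emph{The bound on $\varphi$.} By \eqref{sol-ep}, $\varphi=H+\mathcal{S}_{\Gamma_i}[\phi]$, so the triangle inequality and the mapping property give $\|\varphi\|_{H^1_{loc}(\R^2\setminus\overline D)}\le\|H\|_{H^1(\R^2\setminus\overline D)}+C\|\phi\|_{L^2(\Gamma_i)}$. By \eqref{e-potential-density} and Lemma~\ref{I+K-invertible}, $\phi=(\tfrac12 I+\mathcal{K}^*_{\Gamma_i})^{-1}\big[-\partial_\nu H|_{\Gamma_i}\big]$, whence $\|\phi\|_{L^2(\Gamma_i)}\le C\|\partial_\nu H\|_{L^2(\Gamma_i)}\le C\|\nabla H\|_{L^2(\Gamma_i)}\le C\|H\|_{H^1(\R^2\setminus\overline D)}$ by the trace estimate. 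This is the first inequality.

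\emph{The bound on $p$.} By \eqref{sol-p}, $p=P+\mathcal{S}_{\Gamma_i}[\psi_i]+\mathcal{S}_{\Gamma_e}[\psi_e]$, so as above $\|p\|_{H^1_{loc}(\R^2\setminus\overline D)}\le\|P\|_{H^1(\R^2\setminus\overline D)}+C\big(\|\psi_i\|_{L^2(\Gamma_i)}+\|\psi_e\|_{L^2(\Gamma_e)}\big)$. Invoking \eqref{stability} bounds the density term by $C\big(\|\nabla P\|_{L^2(\Gamma_i)}+\|\nabla\varphi\|_{L^2(\Gamma_e)}\big)$; the first summand is $\le C\|P\|_{H^1(\R^2\setminus\overline D)}$ by the trace estimate. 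For the second, differentiate $\varphi=H+\mathcal{S}_{\Gamma_i}[\phi]$ on $\Gamma_e$: since $\operatorname{dist}(\Gamma_e,\Gamma_i)>0$ the kernel $\nabla_x G(x,y)$ is smooth and bounded for $x\in\Gamma_e$, $y\in\Gamma_i$, so $\|\nabla\varphi\|_{L^2(\Gamma_e)}\le\|\nabla H\|_{L^2(\Gamma_e)}+C\|\phi\|_{L^2(\Gamma_i)}\le C\|H\|_{H^1(\R^2\setminus\overline D)}$ by the previous step (alternatively, combine the $\varphi$-estimate with an interior elliptic bound near $\Gamma_e$). Assembling these gives $\|p\|_{H^1_{loc}(\R^2\setminus\overline D)}\le C\big(\|P\|_{H^1(\R^2\setminus\overline D)}+\|H\|_{H^1(\R^2\setminus\overline D)}\big)$; as $\zeta_0$ enters only through \eqref{stability}, the constant is of the claimed form $C(\zeta_0,D,\Omega)$.

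\emph{Main obstacle.} The argument is essentially pure assembly; the only genuinely technical points are the two facts isolated at the start, namely that single-layer potentials of mean-zero $L^2$-densities have finite $H^1$-norm all the way to infinity (not merely on compacta), and that the boundary-gradient traces of the harmonic background fields are controlled by their interior $H^1$-norm. Making the latter precise requires a little care with the trace and interior-regularity estimates near $\Gamma_i$ and $\Gamma_e$, but uses no idea beyond those already underlying Theorem~\ref{well-posedness}.
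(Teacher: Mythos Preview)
Your proposal is correct and follows precisely the route the paper implicitly intends: the paper states that the corollary is ``a direct consequence of Theorem~\ref{well-posedness}'' and omits the proof entirely, and what you have written is exactly the natural unpacking of that phrase---combine the representation formulas \eqref{sol-ep}, \eqref{sol-p} with the invertibility in Lemma~\ref{I+K-invertible}, the stability bound \eqref{stability}, and the standard $L^2\to H^1_{loc}$ mapping properties of the single-layer potential. There is no alternative argument in the paper to compare against.
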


According to Theorem \ref{well-posedness} and Corollary \ref{cor-stability}, we obtain the representation formula and quantitative estimation of solution of the electro-osmosis model. It will be used to deduce the hydrodynamic cloaking and shielding conditions in the following sections.

\section{Hydrodynamic cloaking and shielding }\label{sec-cloaking-shilding}
This section is devoted to the proofs of Theorems \ref{main-thm-annulus} and \ref{main-thm-ellipses}, which determine the conditions for hydrodynamic cloaking and shielding.
We first consider the microscale hydrodynamic cloaking via electro-osmosis when the domains $D$ and $\Omega$ are concentric disks and confocal ellipses in Subsections \ref{subsec-cloaking-annulus} and \ref{subsec-cloaking-ellipses}, respectively. We calculate the explicit form of the solution to \eqref{electro-osmotic equation} for this two special cases. In Subsection \ref{subsec-optimal}, we then consider hydrodynamic cloaking and shielding for general shapes via an optimization method.

\subsection{ Perfect hydrodynamic cloaking and shielding on the annulus}\label{subsec-cloaking-annulus}
Throughout this subsection, we set
$D :=\{|x| < r_i\}$ and $\Omega :=\{|x| < r_e\}$, where $r_e > r_i$.
For each integer $n$ and $a=i, e$, one can easily see that (cf. \cite{Ammari2013})
\begin{align}\label{S-ra}
\mathcal{S}_{\Gamma}[e^{\i n\theta}](x) = \begin{cases}
\ds -\frac{r_a}{2n}\Big( \frac{r}{r_a}\Big)^n e^{\i n\theta},\quad & |x|=r  < r_a,\ms\\
\ds -\frac{r_a}{2n}\Big( \frac{r_a}{r}\Big)^n e^{\i n\theta},\quad & |x|=r > r_a,
\end{cases}
\end{align}
and
\begin{equation}\label{K-ra}
\mathcal{K}_{\Gamma}^*[e^{\i n\theta}](x)=0, \quad \forall \  n\neq 0.
\end{equation}

We begin with the proof of Theorem \ref{main-thm-annulus}.
\renewcommand{\proofname}{\indent Proof of  Theorem \ref{main-thm-annulus}}
\begin{proof}
Let $H(x) = r^n\e^{\i n\theta}$ for $n\geq 1$. From Theorem \ref{well-posedness}, we have
\begin{equation*}
\varphi=
H(x) + \mathcal{S}_{\Gamma_i}[\phi](x),\quad x\in\mathbb{R}^2\setminus\overline{D},
\end{equation*}
where
\begin{align*}
\Big(\frac{1}{2} I+\mathcal{K}^*_{\Gamma_i}\Big)[\phi]=-\frac{\partial H}{\partial r}\Big |_{r=r_i}.
\end{align*}
By straightforward calculations and using  (\ref{K-ra}), we can obtain
\begin{align}\label{density-annulus-ep}
\phi = -2n r_i^{n-1}e^{\i n\theta}.
\end{align}
Substituting (\ref{density-annulus-ep}) into (\ref{sol-ep}) and using (\ref{S-ra}), we have the solution to \eqref{electro-osmotic equation}
\begin{align}\label{disk-varphi}
\varphi = \Big(r^n+\frac{r_i^{2n}}{r^n}\Big)e^{\i n\theta}.
\end{align}

Let $P(x)=12 r^n\e^{\i n\theta}$ for $n\geq 1$. By (\ref{density-equation}),  (\ref{S-ra}), (\ref{K-ra}), and (\ref{disk-varphi}), if
\begin{align*}
\left[
  \begin{array}{ccc}
    \psi_{i} \ms \\
    \psi_{e}
  \end{array}
\right]
=
\left[
  \begin{array}{cc}
    \psi_{i}^n \ms \\
    \psi_{e}^n
  \end{array}
\right]
\e^{\i n\theta},
\end{align*}
then we have
\begin{align}
\label{density-annulus}
\begin{cases}
\ds \psi_i = 12n\frac{r_i^{n-1}}{r_e^{2n}}\Big((r_e^{2n}-r_i^{2n})\zeta_0-2r_e^{2n}\Big)\e^{\i n\theta}, \ms \\
\ds \psi_e = 12 n \zeta_0 \Big(r_e^{n-1}-\frac{r_i^{2n}}{r_e^{n+1}}\Big) \e^{\i n\theta}.
\end{cases}
\end{align}
 Substituting (\ref{density-annulus}) into (\ref{sol-p}) and using (\ref{S-ra}), we can find that the solution to \eqref{electro-osmotic equation} is
 \begin{align}\label{annulus-p}
 p=
  \begin{cases}
\ds -\frac{6}{r_e^{2n}}\Big((r_e^{2n}-r_i^{2n})\zeta_0-2r_e^{2n}\Big)\Big(r^n+\frac{r_i^{2 n}}{r^n}\Big)\e^{\i n\theta},\quad r_i<r<r_e,
\vspace{1em}\\
\ds 12 r^n\e^{\i n\theta}-\frac{6}{r_e^{2n}}\Big((r_e^{4 n} - r_i^{4 n})\zeta_0 - 2r_i^{2n} r_e^{2n}\Big)\frac{1}{r^n}\e^{\i n\theta},\quad r>r_e.
\end{cases}
 \end{align}

From the second equation in (\ref{annulus-p}), it follows that the outer flow and pressure satisfy the cloaking conditions (\ref{cond-cloaking-p}) and (\ref{cond-cloaking}) provided
\begin{align*}
\zeta_0=\frac{2r_i^{2n}r_e^{2n}}{r_e^{4n}-r_i^{4n}}.
\end{align*}
From the first equation in (\ref{annulus-p}), it follows that the inner pressure vanishes for $r_i<r<r_e$  provided
\begin{align*}
\zeta_0=\frac{2 r_e^{2n}}{r_e^{2n}-r_i^{2n}}.
\end{align*}
Hence the depth-averaged hydrodynamic force $\bm F$ vanishes.
The proof is complete.
\end{proof}

\subsection{Perfect hydrodynamic cloaking and shielding on the confocal ellipses}\label{subsec-cloaking-ellipses}
To consider the perfect hydrodynamic cloaking and shielding on the confocal ellipses we introduce the elliptic coordinates $(\xi, \eta)$ so that $x=(x_1,x_2)$ in Cartesian coordinates are defined by
\begin{align*}
  x_1=l \cosh \xi \cdot \cos \eta, \quad x_2=l \sinh \xi \cdot \sin \eta,\quad \xi \geq 0, \quad 0\leq \eta \leq 2\pi,
\end{align*}
where $2l$ is the focal distance.
Suppose that $\p D=\Gamma_i$ and $\p \Omega=\Gamma_e$ are given by
$$
\Gamma_i = \{ (\xi, \eta) : \xi = \xi_i\} \quad \mbox{and} \quad \Gamma_e = \{ (\xi, \eta) : \xi = \xi_e\},
$$
where the number $\xi_i$ and $\xi_e$ are called the elliptic radius $\Gamma_i$ and $\Gamma_e$, respectively.

Let $\Gamma = \{ (\xi, \eta) : \xi = \xi_a\}$ for $a=i, e$. One can see easily that the length element $\d s$ and the outward normal derivative $\frac{\p}{\p \nu}$ on $\Gamma$ are given in terms of the elliptic coordinates by
\begin{equation*}
  \d s = \gamma \d \eta \quad \mbox{and} \quad \frac{\p}{\p \nu} = \gamma^{-1}\frac{\p}{\p \xi},
\end{equation*}
where
\begin{equation*}
\gamma =  \gamma (\xi_a, \eta) = l \sqrt{\sinh^2\xi_a+\sin^2\eta}.
\end{equation*}

To proceed, it is convenient to use the following notations: for $a=i, e$ and $n=1,2,\dots$,
\begin{align*}
  \beta_n^{c,a} := \gamma (\xi_a, \eta)^{-1} \cos (n\eta) \quad \mbox{and} \quad \beta_n^{s,a} := \gamma (\xi_a, \eta)^{-1} \sin (n\eta).
\end{align*}
For a nonnegative integer $n$ and $a=i, e$, it is proved in \cite{Chung2014, Ando2016} that
\begin{align}\label{S-ellipse-cos}
\mathcal{S}_{\Gamma}[ \beta_n^{c,a}](x) = \begin{cases}
\ds -\frac{\cosh (n\xi)}{n\e^{n\xi_a}}\cos (n\eta),\quad & \xi  < \xi_a,\ms\\
\ds -\frac{\cosh (n\xi_a)}{n\e^{n\xi}}\cos (n\eta),\quad & \xi  > \xi_a,
\end{cases}
\end{align}
and
\begin{align*}
\mathcal{S}_{\Gamma}[\beta_n^{s,a}](x) = \begin{cases}
\ds -\frac{\sinh (n\xi)}{n\e^{n\xi_a}}\sin (n\eta),\quad & \xi  < \xi_a,\ms\\
\ds -\frac{\sinh (n\xi_a)}{n\e^{n\xi}}\sin (n\eta),\quad & \xi  > \xi_a.
\end{cases}
\end{align*}
Moreover, we also have
\begin{align}\label{K-ellipse}
\mathcal{K}^*_\Gamma [\beta_n^{c,a}] = \frac{1}{2 \e^{2n\xi_a}}\beta_n^{c,a}\quad \mbox{and} \quad \mathcal{K}^*_\Gamma [\beta_n^{s,a}] = -\frac{1}{2 \e^{2n\xi_a}}\beta_n^{s,a}.
\end{align}

We are ready to present the proof of Theorem \ref{main-thm-ellipses}.
\renewcommand{\proofname}{\indent Proof of  Theorem \ref{main-thm-ellipses}}
\begin{proof}
Let $H(x)=\cosh (n\xi) \cos (n\eta)$ for $n\geq 1$. From the Theorem \ref{well-posedness} in Section \ref{sec-layer-potentials} and (\ref{K-ellipse}), if $\phi = \phi_n \beta_n^{c,i}$, we have
\begin{equation}\label{ep-density-ellipse}
  \Big(\frac{1}{2} + \frac{1}{2 \e^{2n\xi_i}}\Big) \phi = - n \sinh (n\xi_i) \;\beta_n^{c,i}.
\end{equation}
It is readily seen that the solution to (\ref{ep-density-ellipse}) is given by
\begin{equation}\label{density-ellipse-ep}
  \phi = - n \e^{n\xi_i} \tanh (n\xi_i) \;\beta_n^{c,i}.
\end{equation}
 Substituting (\ref{density-ellipse-ep}) into (\ref{sol-ep}) and using (\ref{S-ellipse-cos}), we have the solution to \eqref{electro-osmotic equation}
\begin{equation}\label{ellipse-varphi}
  \varphi =\big(\cosh (n\xi) + \e^{n\xi_i} \sinh (n\xi_i) \;\e^{-n\xi}\big)\cos (n\eta).
\end{equation}

Let $P(x)=12 \cosh (n\xi) \cos (n\eta)$ for $n\geq 1$. From (\ref{density-equation}) and (\ref{ellipse-varphi}) we first have
\begin{equation}\label{ellipse-phi-e}
     \psi_{e} = 12 n \zeta_0 \big(\sinh (n\xi_e) - \e^{n(\xi_i-\xi_e)} \sinh (n\xi_i) \big)\beta_n^{c,e}.
\end{equation}
If $ \psi_{i} =  \psi_{i}^n \beta_n^{c,i}$, then the first equation in (\ref{density-equation}) is equivalent to
\begin{equation*}
  \Big(\frac{1}{2} + \frac{1}{2 \e^{2n\xi_i}}\Big) \psi_i = 12 n\Big( \big(\sinh (n\xi_e) - \e^{n(\xi_i-\xi_e)}\sinh (n\xi_i)\big)\frac{\sinh (n\xi_i)}{\e^{n\xi_e}}\zeta_0- \sinh (n\xi_i) \Big)\beta_n^{c,i}.
\end{equation*}
By the simple calculation, we can obtain
\begin{equation}\label{ellipse-phi-i}
\psi_i = 12 n\Big( \big(\sinh (n\xi_e) - \e^{n(\xi_i-\xi_e)}\sinh (n\xi_i)\big) \e^{n(\xi_i-\xi_e)} \tanh (n\xi_i) \;\zeta_0- \e^{n\xi_i} \tanh (n\xi_i) \Big)\beta_n^{c,i}.
\end{equation}
Substituting (\ref{ellipse-phi-e}) and (\ref{ellipse-phi-i}) into (\ref{sol-ep}) and using (\ref{S-ellipse-cos}), we can obtain the solution to \eqref{electro-osmotic equation}
{\small
 \begin{align}\label{ellipse-p-dir-x}
 p=
  \begin{cases}
\ds -\frac{12}{\e^{n\xi_e}}\Big(\big(\sinh (n\xi_e) - \e^{n(\xi_i-\xi_e)}\sinh (n\xi_i)\big)\zeta_0 -\e^{n\xi_e}\Big)\Big(\cosh (n\xi) + \e^{n\xi_i} \sinh (n\xi_i) \, \e^{-n\xi}\Big)\cos (n\eta),\ &\xi_i<\xi<\xi_e,
\vspace{1em}\\
\ds \Big( 12 \cosh (n\xi) - 12\Big(\zeta_0\big(\sinh (n\xi_e) - \e^{n(\xi_i-\xi_e)}\sinh (n\xi_i) \big)\cosh (n(\xi_e -\xi_i) - \sinh (n\xi_i)) \Big)\frac{\e^{n\xi_i}}{\e^{n\xi}}\Big)\cos (n\eta),\ &\xi>\xi_e.
\end{cases}
 \end{align}}
 If $H(x) = \sinh (n\xi) \sin (n\eta)$ and $P = 12 \sinh (n\xi) \sin (n\eta)$ for $n\geq 1$, in a similar way we can find the solution to \eqref{electro-osmotic equation}
 \begin{equation*}
  \varphi =\big(\sinh (n\xi) + \e^{n\xi_i} \cosh (n\xi_i) \;\e^{-n\xi}\big)\sin (n\eta),
\end{equation*}
 and the solution to \eqref{electro-osmotic equation}
 {\small
 \begin{align}\label{ellipse-p-dir-y}
 p=
  \begin{cases}
\ds -\frac{12}{\e^{n\xi_e}}\Big(\big(\cosh (n\xi_e) - \e^{n(\xi_i-\xi_e)}\cosh (n\xi_i)\big)\zeta_0 -\e^{n\xi_e}\Big)\Big(\sinh (n\xi) + \e^{n\xi_i} \cosh (n\xi_i) \, \e^{-n\xi}\Big)\sin (n\eta),\quad & \xi_i<\xi<\xi_e,
\vspace{1em}\\
\ds \Big( 12 \sinh (n\xi) - 12\Big(\zeta_0\big(\cosh (n\xi_e) - \e^{n(\xi_i-\xi_e)}\cosh (n\xi_i)\big)\cosh(\xi_e -\xi_i) - \cosh (n\xi_i) \Big)\frac{\e^{n\xi_i}}{\e^{n\xi}}\Big)\sin (n\eta),\quad &\xi>\xi_e.
\end{cases}
 \end{align}}
The cloaking and shielding conditions  immediately follow from the equations (\ref{ellipse-p-dir-x}) and (\ref{ellipse-p-dir-y}).

The proof is complete.
\end{proof}

\subsection{Approximate hydrodynamic cloaking using optimization method}\label{subsec-optimal}
In this subsection, we discuss a general framework for hydrodynamic cloaking and shielding. According to Definition \ref{def-cloaking} and equation (\ref{cond-cloaking-p}) and in order to ensure the cloaking occurs, we only need to solve the following overdetermined boundary value problem with $\{D,\Omega;\zeta_0\}$:
\begin{align}\label{press-cloaking}
\begin{cases}
\ds \Delta p= 0 & \mbox{in }  \Omega\setminus\overline{D}, \ms\\
\ds \frac{\partial p}{\partial \nu} = 0  & \mbox{on } \partial D, \ms\\
\ds p=P & \mbox{on }  \partial \Omega,\ms\\
\ds \frac{\partial P}{\partial \nu} - \frac{\partial p}{\partial \nu}  = 12 \zeta_0\frac{\partial \varphi}{\partial \nu} & \mbox{on } \partial \Omega.
\end{cases}
\end{align}
and $\varphi$ satisfies:
\begin{align}\label{electro equation}
\begin{cases}
\ds \Delta \varphi = 0  & \mbox{in } \mathbb{R}^2\setminus\overline{D}, \ms \\
\ds \frac{\partial \varphi}{\partial \nu} = 0  & \mbox{on } \partial D, \ms \\
\ds  \varphi = H(x) + O(|x|^{-1}) & \mbox{as } |x|\rightarrow + \infty.
\end{cases}
\end{align}

In fact, from the uniqueness of exterior Dirichlet problem for the Laplace equation with $p|_{\partial \Omega}=P$, we obtain $p=P$, in $\mathbb{R}^2\setminus\overline{\Omega}$, i.e., the perfect hydrodynamic cloaking occurs.

Similarly, according to Definition \ref{def-shielding} and equation (\ref{cond-shileding-p}), and in order to ensure the shielding occurs, the following overdetermined exterior boundary value problem is needed to solve:
\begin{align}\label{press-shielding}
\begin{cases}
\ds \Delta p= 0 & \mbox{in }  \mathbb{R}^2\setminus\overline{\Omega}, \ms\\
\ds p=0 & \mbox{on }  \partial \Omega,\ms\\
\ds \frac{\partial p}{\partial \nu}  = 12 \zeta_0\frac{\partial \varphi}{\partial \nu} & \mbox{on } \partial \Omega,\ms \\
\ds  p = P(x) + O(|x|^{-1}) & \mbox{as } |x|\rightarrow +\infty.
\end{cases}
\end{align}

In Sections \ref{subsec-cloaking-annulus} and \ref{subsec-cloaking-ellipses}, we know that perfect hydrodynamic cloaking and shielding occur in two special cases by using explicit expressions. One can easily verify that these cloaking and shielding structures $\{D,\Omega;\zeta_0\}$ satisfy equations (\ref{press-cloaking}) and (\ref{press-shielding}), respectively. However, for the arbitrary shape, we can not demonstrate the cloaking and shielding structures $\{D,\Omega;\zeta_0\}$ exists since the analytical solutions do not exist for the general shape. In particular, the overdetermined problem (\ref{press-cloaking}) and (\ref{press-shielding}) with general domain have no solution.  Fortunately, the numerical experiments show that the approximate hydrodynamic cloaking and shielding exist for the general shape.

To find the approximate hydrodynamic cloaking structure $\{D,\Omega;\zeta_0\}$ for general shape, we first solve the electrostatic potential equation in \eqref{electro-osmotic equation} and then the following interior mixed boundary value problem
\begin{align}\label{p-cloaking-Dirichlet}
\begin{cases}
\ds \Delta p= 0 & \mbox{in }  \Omega\setminus\overline{D}, \ms\\
\ds \frac{\partial p}{\partial \nu} = 0  & \mbox{on } \partial D, \ms\\
\ds p=P & \mbox{on }  \partial \Omega.
\end{cases}
\end{align}
Hence we can obtain $\zeta_0$ that satisfies the cloaking condition (\ref{cond-cloaking-p}) by solving the following equation
\begin{equation}\label{p-cloaking-zeta}
  \ds \frac{\partial P}{\partial \nu} - \frac{\partial p}{\partial \nu} = 12 \zeta_0\frac{\partial \varphi}{\partial \nu} \quad \mbox{on } \partial \Omega.
\end{equation}
In order to solve equation (\ref{p-cloaking-Dirichlet}), one can use the standard Nystr\"{o}m method or finite element method. It is worth noting that the solution to (\ref{p-cloaking-zeta}) does not exist for the general shape. Hence we need to choose optimal $\zeta_0$ by optimization method.
Define a cost functional to be
\begin{equation}\label{cost-functional-cloaking}
  \mathcal{F}(\zeta_0)=\Big\|\frac{\partial P}{\partial \nu} - \frac{\partial p}{\partial \nu}
-12 \zeta_0 \frac{\partial \varphi}{\partial \nu}\Big\|^2_{L^2(\partial \Omega)}, \quad \zeta_0\in[a,b]\subset\R.
\end{equation}
Then the optimal zeta potential is defined by
\begin{equation}\label{zeta-opt-cloaking}
  \zeta_{0,opt} :=\mathop{\arg\min}_{\zeta_0\in [a,b]} \,\mathcal{F}(\zeta_0).
\end{equation}
That is, to design the approximate hydrodynamic cloaking, we can solve the PDE-constrained optimization problem (\ref{p-cloaking-Dirichlet}), (\ref{electro equation}) and (\ref{zeta-opt-cloaking}).

Similarly, for shielding, we first solve electrostatic potential equation in \eqref{electro-osmotic equation} and the following exterior Dirichlet boundary value problem
\begin{align}
\label{press-shielding-Dirichlet}
\begin{cases}
\ds \Delta p= 0 & \mbox{in }  \mathbb{R}^2\setminus\overline{\Omega}, \ms\\
\ds p=0 & \mbox{on }  \partial \Omega,\ms\\
\ds  p = P(x) + O(|x|^{-1}) & \mbox{as } |x|\rightarrow +\infty.
\end{cases}
\end{align}
Let the cost functional be
\begin{equation}\label{cost-functional-shielding}
  \mathcal{G}(\zeta_0)=\Big\|\frac{\partial p}{\partial \nu}
-12 \zeta_0 \frac{\partial \varphi}{\partial \nu}\Big\|^2_{L^2(\partial \Omega)}, \quad \zeta_0\in[c,d],
\end{equation}
then the optimal zeta potential is defined by
\begin{equation}\label{zeta-opt-shielding}
  \zeta_{0,opt} :=\mathop{\arg\min}_{\zeta_0\in [c,d]} \,\mathcal{G}(\zeta_0).
\end{equation}

The existence, uniqueness and stability of a minimizer for the constrained optimization problem $\eqref{cost-functional-cloaking}$ and $\eqref{cost-functional-shielding}$ are given in the following theorems. We only give the proof of the optimal problem $\eqref{cost-functional-cloaking}$. The proof of $\eqref{cost-functional-shielding}$ is similar
to that of the optimization problem $\eqref{cost-functional-cloaking}$  and therefore is skipped.

\begin{thm}
There exists a unique optimal zeta potential  $\zeta_{0,opt} \in [a,b]$, which minimizes the cost functional $\mathcal{F}(\zeta_0)$ with the PDE-constraint (\ref{p-cloaking-Dirichlet}) and (\ref{electro equation}) (or $\mathcal{G}(\zeta_0)$  with the PDE-constraint (\ref{electro equation}) and (\ref{press-shielding-Dirichlet})) over all $\zeta_0\in [a,b]$.
\end{thm}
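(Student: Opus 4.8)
The plan is to reduce the claim to a one-dimensional convex optimization problem on a compact interval, exploiting the fact that the two PDE constraints do not contain the parameter $\zeta_0$ at all. Indeed, the electrostatic potential $\varphi$ solving \eqref{electro equation} and the pressure $p$ solving the interior mixed boundary value problem \eqref{p-cloaking-Dirichlet} are each uniquely determined by the fixed data $H$, $P$ and the fixed geometry $D,\Omega$; only the functional $\mathcal{F}$, through the term $12\zeta_0\,\partial_\nu\varphi$, depends on $\zeta_0$, and it does so quadratically.

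Concretely, I would first invoke Theorem \ref{well-posedness} (equivalently, the classical solvability of the exterior Neumann problem) for unique solvability of \eqref{electro equation}, and the Lax--Milgram lemma applied to the natural variational formulation of \eqref{p-cloaking-Dirichlet} (or, again, the layer-potential representation) for unique solvability of that mixed problem. Since $D$ and $\Omega$ are of class $C^{1,\alpha}$ and $H,P$ are harmonic, hence smooth in a neighbourhood of $\partial\Omega$, elliptic regularity yields $\varphi,p\in C^{1,\alpha}$ up to $\partial\Omega$, so that
\[
g:=\frac{\partial P}{\partial\nu}-\frac{\partial p}{\partial\nu}\Big|_{\partial\Omega}\in L^2(\partial\Omega),
\qquad
h:=\frac{\partial\varphi}{\partial\nu}\Big|_{\partial\Omega}\in L^2(\partial\Omega)
\]
are well defined and independent of $\zeta_0$. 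Then $\mathcal{F}(\zeta_0)=\|g-12\zeta_0 h\|_{L^2(\partial\Omega)}^2$ expands to the quadratic polynomial
\[
\mathcal{F}(\zeta_0)=144\,\|h\|_{L^2(\partial\Omega)}^2\,\zeta_0^2-24\,\langle g,h\rangle_{L^2(\partial\Omega)}\,\zeta_0+\|g\|_{L^2(\partial\Omega)}^2 .
\]

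The one point that genuinely requires an argument is that the leading coefficient is strictly positive, i.e. that $h\not\equiv0$ on $\partial\Omega$. If $\partial_\nu\varphi$ vanished on $\partial\Omega$ then, since also $\partial_\nu\varphi=0$ on $\partial D$, Green's identity on $\Omega\setminus\overline{D}$ would force $\nabla\varphi\equiv0$ there, and unique continuation of the harmonic function $\varphi$ on the connected set $\R^2\setminus\overline{D}$ would make $\varphi$ globally constant, contradicting the far-field condition $\varphi\to H$ for a nonconstant background field $H$. Hence $\mathcal{F}$ is strictly convex and coercive on $\R$, with unconstrained minimizer $\zeta_0^{\star}=\langle g,h\rangle/\big(12\,\|h\|^2\big)$.

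Finally, restricting to the compact interval $[a,b]$, continuity of $\mathcal{F}$ together with compactness gives existence of a minimizer, while strict convexity on the convex set $[a,b]$ gives uniqueness; explicitly $\zeta_{0,opt}=\min\{b,\max\{a,\zeta_0^{\star}\}\}$, the projection of $\zeta_0^{\star}$ onto $[a,b]$, which also makes transparent the continuous dependence (stability) on the data. The argument for $\mathcal{G}$ is verbatim the same, with $p$ now the unique, $\zeta_0$-independent solution of the exterior Dirichlet problem \eqref{press-shielding-Dirichlet} and with $g$ replaced by $-\,\partial_\nu p|_{\partial\Omega}$. I expect no serious obstacle beyond the non-degeneracy $h\not\equiv0$ and the routine verification that the $C^{1,\alpha}$ regularity of the domains renders the normal-derivative traces on $\partial\Omega$ meaningful in $L^2$.
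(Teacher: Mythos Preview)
Your argument is correct and follows the same overall strategy as the paper---continuity plus strict convexity of $\mathcal{F}$ on the compact interval $[a,b]$---but your execution is more explicit: you expand $\mathcal{F}$ as a quadratic in $\zeta_0$ and read off strict convexity from the positivity of the leading coefficient, whereas the paper verifies Lipschitz continuity and convexity directly from the $L^2$ norm. Your version has two advantages worth noting. First, you obtain the closed-form unconstrained minimizer $\zeta_0^\star=\langle g,h\rangle/(12\|h\|^2)$ and the projection formula onto $[a,b]$, which the paper does not record. Second, and more substantively, you isolate and prove the non-degeneracy condition $h=\partial_\nu\varphi|_{\partial\Omega}\not\equiv 0$; the paper's strict-inequality step $\mathcal{F}(\lambda_1\zeta_0^{(1)}+\lambda_2\zeta_0^{(2)})<\lambda_1\mathcal{F}(\zeta_0^{(1)})+\lambda_2\mathcal{F}(\zeta_0^{(2)})$ tacitly relies on this (otherwise $\mathcal{F}$ is constant and every point of $[a,b]$ is a minimizer), but does not verify it. Your unique-continuation argument for $h\not\equiv 0$ is the right way to close that gap.
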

\renewcommand{\proofname}{Proof}
\begin{proof}
For any $\zeta_0^{(1)}$, $\zeta_0^{(2)}\in [a,b]$, by the $L^2$-estimation of solution for (\ref{p-cloaking-Dirichlet}),  we have
\begin{align*}
&|\mathcal{F}\big(\zeta_0^{(1)}\big) - \mathcal{F}\big(\zeta_0^{(1)}\big)| \\
&\leq 24 \big|\zeta_0^{(1)}-  \zeta_0^{(2)}\big|\cdot \Big\|\frac{\p (P-p)}{\p \nu}\Big\|_{L^2(\partial \Omega)} \cdot \Big\|\frac{\p \varphi}{\p\nu}\Big\|_{L^2(\partial \Omega)} + 12^2 \big|\zeta_0^{(1)} -  \zeta_0^{(2)}\big|\cdot \big|\zeta_0^{(1)} +  \zeta_0^{(2)}\big|\cdot\Big\| \frac{\p \varphi}{\p \nu}\Big\|^2_{L^2(\partial \Omega)}\\
&\leq C \big|\zeta_0^{(1)} -  \zeta_0^{(2)}\big|.
\end{align*}
Hence $\mathcal{F}(\zeta_0)$ is Lipschitz continuous in $[a,b]$. Furthermore, supposing $\lambda_1$, $\lambda_2 \in (0, 1)$ and $\lambda_1+\lambda_2=1$, we obtain
\begin{align*}
&\mathcal{F}\Big(\lambda_1\zeta_0^{(1)}+\lambda_2 \zeta_0^{(2)}\Big)\\
&= \Big\|\frac{\partial P}{\partial \nu} - \frac{\partial p}{\partial \nu}
-12 \big(\lambda_1\zeta_0^{(1)}+\lambda_2 \zeta_0^{(2)}\big) \frac{\partial \varphi}{\partial \nu}\Big\|^2_{L^2(\partial \Omega)}\\
&< \lambda_1\Big\|\frac{\partial P}{\partial \nu} - \frac{\partial p}{\partial \nu}
-12 \zeta_0^{(1)} \frac{\partial \varphi}{\partial \nu}\Big\|^2_{L^2(\partial \Omega)} + \lambda_2\Big\|\frac{\partial P}{\partial \nu} - \frac{\partial p}{\partial \nu}
-12 \zeta_0^{(2)} \frac{\partial \varphi}{\partial \nu}\Big\|^2_{L^2(\partial \Omega)}\\
&= \lambda_1\mathcal{F}\big(\zeta_0^{(1)}\big)+ \lambda_2 \mathcal{F}\big(\zeta_0^{(2)}\big).
\end{align*}
Then $\mathcal{F}(\zeta_0)$ is strictly convex in $[a,b]$.  Therefore the cost functional \eqref{cost-functional-cloaking} has a unique minimizer.

The proof is complete.
\end{proof}

\begin{thm}
Let $\{p_k\}$ and $\{\varphi_k\}$ be sequences such that $\frac{\partial p_k}{\partial \nu} \rightarrow p$ and $\frac{\partial \varphi_k}{\partial \nu} \rightarrow \varphi$, as $k \rightarrow \infty$ in $L^2(\partial \Omega)$ and let $\{\zeta_{0,opt}^{(k)}\}$ be a minimizer of \eqref{cost-functional-cloaking} (or \eqref{cost-functional-shielding}) with $\varphi$ and $p$ replaced by
$\varphi_k$ and $p_k$. Then the optimal zeta potential is stable with respect to the electrostatic potential $\varphi$ and pressure $p$, i.e., ${\zeta_{0,opt}^{(k)}}\rightarrow\zeta_{0,opt}$.
\end{thm}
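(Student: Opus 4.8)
The plan is the classical scheme for stability of minimizers: uniform convergence of the cost functionals on the compact admissible interval, compactness of the minimizers, and uniqueness of the limiting minimizer. I interpret the hypothesis as $\partial p_k/\partial\nu\to\partial p/\partial\nu$ and $\partial\varphi_k/\partial\nu\to\partial\varphi/\partial\nu$ in $L^2(\partial\Omega)$, where $p,\varphi$ are the data entering \eqref{cost-functional-cloaking}; write $\mathcal{F}$ for that functional and $\mathcal{F}_k$ for the one obtained by replacing $\partial p/\partial\nu$, $\partial\varphi/\partial\nu$ with $\partial p_k/\partial\nu$, $\partial\varphi_k/\partial\nu$, so $\zeta_{0,opt}$ minimizes $\mathcal{F}$ over $[a,b]$ and $\zeta_{0,opt}^{(k)}$ minimizes $\mathcal{F}_k$ over $[a,b]$, both well defined and unique by the previous theorem. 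First I would prove $\mathcal{F}_k\to\mathcal{F}$ uniformly on $[a,b]$. With $h_k(\zeta_0):=\partial P/\partial\nu-\partial p_k/\partial\nu-12\zeta_0\,\partial\varphi_k/\partial\nu$ and $h(\zeta_0)$ the analogous limiting expression, one has $\mathcal{F}_k(\zeta_0)-\mathcal{F}(\zeta_0)=\langle h_k(\zeta_0)-h(\zeta_0),\,h_k(\zeta_0)+h(\zeta_0)\rangle_{L^2(\partial\Omega)}$, so Cauchy--Schwarz reduces the matter to two estimates: $\|h_k(\zeta_0)-h(\zeta_0)\|_{L^2(\partial\Omega)}\le\|\partial(p_k-p)/\partial\nu\|_{L^2(\partial\Omega)}+12\max(|a|,|b|)\,\|\partial(\varphi_k-\varphi)/\partial\nu\|_{L^2(\partial\Omega)}\to0$ uniformly in $\zeta_0\in[a,b]$, while $\sup_{\zeta_0\in[a,b]}\|h_k(\zeta_0)+h(\zeta_0)\|_{L^2(\partial\Omega)}$ is bounded uniformly in $k$ since convergent sequences in $L^2(\partial\Omega)$ are bounded and $[a,b]$ is bounded. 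Hence $\sup_{[a,b]}|\mathcal{F}_k-\mathcal{F}|\to0$.

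Next I would exploit compactness. Since $\{\zeta_{0,opt}^{(k)}\}\subset[a,b]$, every subsequence has a further subsequence $\{\zeta_{0,opt}^{(k_j)}\}$ with $\zeta_{0,opt}^{(k_j)}\to\zeta^{\ast}\in[a,b]$. For any fixed $\zeta_0\in[a,b]$, minimality gives $\mathcal{F}_{k_j}(\zeta_{0,opt}^{(k_j)})\le\mathcal{F}_{k_j}(\zeta_0)$; letting $j\to\infty$, the right side tends to $\mathcal{F}(\zeta_0)$ by the uniform convergence, and the left side tends to $\mathcal{F}(\zeta^{\ast})$ because $|\mathcal{F}_{k_j}(\zeta_{0,opt}^{(k_j)})-\mathcal{F}(\zeta^{\ast})|\le\sup_{[a,b]}|\mathcal{F}_{k_j}-\mathcal{F}|+|\mathcal{F}(\zeta_{0,opt}^{(k_j)})-\mathcal{F}(\zeta^{\ast})|\to0$, using the uniform convergence of the first step together with the Lipschitz continuity of $\mathcal{F}$ on $[a,b]$ proved in the previous theorem. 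Thus $\mathcal{F}(\zeta^{\ast})\le\mathcal{F}(\zeta_0)$ for all $\zeta_0\in[a,b]$, so $\zeta^{\ast}$ minimizes $\mathcal{F}$, and by uniqueness of the minimizer (strict convexity) $\zeta^{\ast}=\zeta_{0,opt}$.

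Since every subsequence of $\{\zeta_{0,opt}^{(k)}\}$ has a further subsequence converging to the common limit $\zeta_{0,opt}$, the whole sequence converges, $\zeta_{0,opt}^{(k)}\to\zeta_{0,opt}$, which is the claim. The case of the shielding functional $\mathcal{G}$ in \eqref{cost-functional-shielding} is verbatim, with $\partial P/\partial\nu-\partial p_k/\partial\nu$ replaced by $\partial p_k/\partial\nu$ and $[a,b]$ replaced by $[c,d]$.

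I do not anticipate a genuine analytic difficulty here; the one point requiring care is the \emph{uniformity in} $\zeta_0$ of the convergence $\mathcal{F}_k\to\mathcal{F}$, which is precisely where boundedness of the admissible interval enters (through $|12\zeta_0|\le12\max(|a|,|b|)$ and the resulting uniform bound on $\|h_k(\zeta_0)+h(\zeta_0)\|_{L^2(\partial\Omega)}$); without it the cross term in the squared $L^2$-norm could fail to be small uniformly. Everything after that — extracting a convergent subsequence of minimizers, passing to the limit in the minimality inequality, and upgrading subsequential convergence to convergence of the whole sequence via uniqueness of the limiting minimizer — is routine.
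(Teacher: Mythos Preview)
Your argument is correct and follows essentially the same scheme as the paper: extract a convergent subsequence of minimizers in the compact interval, pass to the limit in the minimality inequality using convergence of the data in $L^2(\partial\Omega)$, and identify the limit via uniqueness of the minimizer. Your version is in fact more complete than the paper's, since you make the uniform convergence $\mathcal{F}_k\to\mathcal{F}$ on $[a,b]$ explicit and you spell out the ``every subsequence has a further subsequence'' step to upgrade subsequential convergence to convergence of the full sequence, both of which the paper glosses over.
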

\begin{proof}
  From the definition of ${\zeta_{0,opt}^{(k)}}$, we find
  \begin{align}\label{ineq}
  \Big\|\frac{\partial P}{\partial \nu} - \frac{\partial p_k}{\partial \nu}-12\zeta_{0,opt}^{(k)} \frac{\partial \varphi_k}{\partial \nu}\Big\|^2_{L^2(\partial \Omega)}
  \leq\Big\|\frac{\partial P}{\partial \nu} - \frac{\partial p_k}{\partial \nu} -12 \zeta_0 \frac{\partial \varphi_k}{\partial \nu}\Big\|^2_{L^2(\partial \Omega)}, \quad \forall \;  \zeta_0\in [a,b].
  \end{align}
 Since $\zeta_{0,opt}^{(k)}\in [a,b]$, there exists a subsequence, still denoted $\{\zeta_{0,opt}^{(k)}\}$, which implies $\zeta_{0,opt}^{(k)}\rightarrow\zeta_{0,opt}^{(0)}$  and $\zeta_{0,opt}\in [a,b]$.  Based on the continuity of $L^2$-norm, by \eqref{ineq} it follows that
  \begin{align*}
  \left\| \frac{\partial P}{\partial \nu} - \frac{\partial p}{\partial \nu}-12\zeta_{0,opt}^{(0)} \frac{\partial \varphi}{\partial \nu}\right\|^2_{L^2(\partial \Omega)}
  &= \lim_{k\rightarrow\infty}\left\|\frac{\partial P}{\partial \nu} - \frac{\partial p_k}{\partial \nu}-12\zeta_{0,opt}^{(k)} \frac{\partial \varphi_k}{\partial \nu}\right\|^2_{L^2(\partial \Omega)}\\
  &\leq \lim_{k\rightarrow\infty}\left\|\frac{\partial P}{\partial \nu} - \frac{\partial p_k}{\partial \nu} -12 \zeta_0 \frac{\partial \varphi_k}{\partial \nu}\right\|^2_{L^2(\partial \Omega)}\\
  &=  \left\| \frac{\partial P}{\partial \nu} - \frac{\partial p}{\partial \nu} -12 \zeta_0 \frac{\partial \varphi}{\partial \nu}\right\|^2_{L^2(\partial \Omega)},
  \end{align*}
  for  all $\zeta_0\in [a,b]$. This deduces that $\zeta_{0,opt}^{(0)}$ is a minimizer of $\mathcal{F}(\zeta_0)$. By the uniqueness of the minimizer, we have $\zeta_{0,opt}^{(0)}=\zeta_{0,opt}$.
  The proof is complete.
\end{proof}
\begin{rem}
In fact, from the continuous dependence on the background fields $H$ and $P$ of the solutions of (\ref{p-cloaking-Dirichlet}) and (\ref{electro equation}), we can also deduce the stability of the minimizer with respect to the background fields.
\end{rem}

Next, we give the proof of Theorem \ref{thm-estimation}.
\renewcommand{\proofname}{\indent Proof of  Theorem \ref{thm-estimation}}
\begin{proof}
We first prove the case of cloaking.
From the boundary condition $\frac{\partial p}{\partial \nu} \Big|_{+} - \frac{\partial p}{\partial \nu} \Big|_{-} = 12 \zeta_0\frac{\partial \varphi}{\partial \nu}$  on $\partial \Omega$ defined in
equation \eqref{electro-osmotic equation}, we have
\begin{align}\label{eq-boundary}
\frac{\partial P}{\partial \nu}-\frac{\partial p}{\partial \nu} \Big|_{+} = \frac{\partial P}{\partial \nu}- \frac{\partial p}{\partial \nu} \Big|_{-} - 12 \zeta_0\frac{\partial \varphi}{\partial \nu} \quad \mbox{on } \p \Omega.
\end{align}
Applying the $L^2(\partial \Omega)$ norm to both sides of the equation \eqref{eq-boundary} gives
\begin{align*}
 \Big\|\frac{\partial P}{\partial \nu}- \frac{\partial p}{\partial \nu} \Big|_{+} \Big\|_{L^2(\partial \Omega)} =\Big\| \frac{\partial P}{\partial \nu} - \frac{\partial p}{\partial \nu}\Big|_{-} -12 \zeta_0 \frac{\partial \varphi}{\partial \nu}\Big\|_{L^2(\partial \Omega)}.
\end{align*}
Setting $\zeta_0=\zeta_{0,opt}$ and using the condition $\mathcal{F}(\zeta_{0,opt})<\epsilon^2$, we obtain $\Big\|\frac{\partial P}{\partial \nu}- \frac{\partial p}{\partial \nu} \Big|_{+} \Big\|_{L^2(\partial \Omega)} < \epsilon$.
Applying Green's formula \cite{Kress1989} to $P$ and $p$ in $\R^2 \setminus \overline{\Omega}$, we have
\begin{alignat}{2}\label{Gf-exterior1}
\ds P(x)&=P_{\infty} + \int_{\p \Omega} G(x,y) \frac{\p P}{\p \nu}(y)\d s(y) - \int_{\p \Omega} \frac{\p G(x,y)}{\p \nu(y)}P(y)\d s(y), \quad &&x\in \R^2 \setminus \overline{\Omega},\\
\ds p(x)&=p_{\infty} +  \int_{\p \Omega} G(x,y) \frac{\p p}{\p \nu}(y)\d s(y) - \int_{\p \Omega} \frac{\p G(x,y)}{\p \nu(y)}p(y)\d s(y), \quad &&x\in \R^2\setminus \overline{\Omega}.\label{Gf-exterior2}
\end{alignat}
where the mean value properties at infinity
\begin{align*}
P_{\infty} = \frac{1}{2 \pi r} \int_{|y|=r} P(y) \d s(y) \quad \mbox{and} \quad p_{\infty} = \frac{1}{2 \pi r} \int_{|y|=r} p(y) \d s(y)
\end{align*}
for sufficiently large $r$ are satisfied.

In the case of cloaking, we require  $p|_{+}=P$ on $\p \Omega$. Moreover, from \eqref{Gf-exterior1}, \eqref{Gf-exterior2} and the boundary condition $\Big\|\frac{\partial P}{\partial \nu}- \frac{\partial p}{\partial \nu} \Big|_{+} \Big\|_{L^2(\partial \Omega)} < \epsilon$, the following inequalities hold:
\begin{align*}
|p(x) - P(x)|&\leq |p_{\infty} - P_{\infty}| + \Big|\int_{\p \Omega} G(x,y) \Big(\frac{\p p}{\p \nu}(y)-\frac{\p P}{\p \nu}(y)\Big)\d s(y) - \int_{\p \Omega} \frac{\p G(x,y)}{\p \nu(y)}\big(p(y) - P(y)\big)\d s(y)\Big|\\
&\leq \frac{1}{2 \pi r} \int_{|y|=r} |p(y)- P(y)| \d s(y) + \Big(\int_{\p \Omega} G^2(x,y)\d s(y)\Big)^{\frac{1}{2}} \cdot \Big(\int_{\p \Omega}\Big(\frac{\p p}{\p \nu}(y)-\frac{\p P}{\p \nu}(y)\Big)^2\d s(y)\Big)^{\frac{1}{2}}\\
&\leq C\Big\|\frac{\partial P}{\partial \nu}- \frac{\partial p}{\partial \nu} \Big|_{+} \Big\|_{L^2(\partial \Omega)}\\
&<\epsilon, \quad x\in \R^2 \setminus \overline{\Omega}.
\end{align*}
We next prove the case of shielding.
Setting $\zeta_0=\zeta_{0,opt}$ and using the condition $\mathcal{G}(\zeta_{0,opt})<\epsilon^2$ and the boundary condition $\frac{\partial p}{\partial \nu} \Big|_{+} - \frac{\partial p}{\partial \nu} \Big|_{-} = 12 \zeta_0\frac{\partial \varphi}{\partial \nu}$  on $\partial \Omega$, we obtain $\Big\|\frac{\partial p}{\partial \nu}\Big|_{-}\Big\|_{L^2(\partial \Omega)} < \epsilon$.
Applying Green's formula \cite{Kress1989} to $p$ in $\Omega \setminus \overline{D}$, we have
\begin{align}\label{Gf-interior}
\ds p(x)&=\int_{\p \Omega} \frac{\p G(x,y)}{\p \nu(y)}p(y)\d s(y) - \int_{\p \Omega} G(x,y) \frac{\p p}{\p \nu}(y)\d s(y), \quad x\in \Omega \setminus \overline{D}.
\end{align}
In the case of shielding, we require  $p|_{-}=0$ on $\p \Omega$. Moreover, from \eqref{Gf-interior} and  the boundary condition $\Big\|\frac{\partial p}{\partial \nu}\Big|_{-}\Big\|_{L^2(\partial \Omega)} < \epsilon$, the following inequalities hold:
\begin{align*}
|p(x)|&=\Big|\int_{\p \Omega} \frac{\p G(x,y)}{\p \nu(y)}p(y)\d s(y) - \int_{\p \Omega} G(x,y) \frac{\p p}{\p \nu}(y)\d s(y)\Big|\\
&= \Big|\int_{\p \Omega} G(x,y) \frac{\p p}{\p \nu}(y)\d s(y)\Big|\\
&\leq \Big(\int_{\p \Omega} G^2(x,y)\d s(y)\Big)^{\frac{1}{2}}\cdot \Big(\int_{\p \Omega}\Big(\frac{\p p}{\p \nu}(y)\Big)^2\d s(y)\Big)^{\frac{1}{2}}\\
&\leq C\Big\|\frac{\partial p}{\partial \nu}\Big|_{-}\Big\|_{L^2(\partial \Omega)}<\epsilon, \quad x\in \Omega \setminus \overline{D}.
\end{align*}

The proof is complete.
\end{proof}

In the proof of  Theorem \ref{thm-estimation}, we find the following corollary.
\begin{cor}\label{cor} Let $p$ be the solution to (\ref{electro-osmotic equation}) with $p|_{+}=P$ on $\p \Omega$. Then there exists a positive constant $C$ such that
\begin{align*}
|p(x) - P(x)|\leq C\Big\| \frac{\partial P}{\partial \nu} - \frac{\partial p}{\partial \nu}\Big|_{-} -12 \zeta_0 \frac{\partial \varphi}{\partial \nu}\Big\|_{L^2(\partial \Omega)}, \quad x\in \R^2 \setminus \overline{\Omega}.
\end{align*}
 Let $p$ be the solution to (\ref{electro-osmotic equation}) with $p|_{-}=0$ on $\p \Omega$. Then there exists a positive constant $C$ such that
\begin{align*}
|p(x)|\leq C\Big\|\frac{\partial p}{\partial \nu}\Big|_{+}
-12 \zeta_0 \frac{\partial \varphi}{\partial \nu}\Big\|_{L^2(\partial \Omega)}, \quad x\in \Omega \setminus \overline{D}.
\end{align*}
\end{cor}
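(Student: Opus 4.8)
The plan is to read off the two inequalities directly from the chain of estimates already assembled in the proof of Theorem~\ref{thm-estimation}, simply reorganising the right-hand side so that it exhibits the mismatch quantity $\frac{\p P}{\p\nu}-\frac{\p p}{\p\nu}|_{-}-12\zeta_0\frac{\p\varphi}{\p\nu}$ (resp.\ $\frac{\p p}{\p\nu}|_{+}-12\zeta_0\frac{\p\varphi}{\p\nu}$) in place of the one-sided trace $\frac{\p p}{\p\nu}|_{+}$ (resp.\ $\frac{\p p}{\p\nu}|_{-}$) that appears there.

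\emph{Cloaking bound.} Take $p$ solving \eqref{electro-osmotic equation} with $p|_{+}=P$ on $\p\Omega$. Subtracting the exterior Green representation \eqref{Gf-exterior2} for $p$ from \eqref{Gf-exterior1} for $P$ on $\R^2\setminus\overline\Omega$, the double-layer contributions cancel because $p=P$ on $\p\Omega$, so only the single-layer term driven by $\frac{\p P}{\p\nu}-\frac{\p p}{\p\nu}|_{+}$ survives, together with the constant $p_\infty-P_\infty$, which is dominated by $\frac{1}{2\pi r}\int_{|y|=r}|p-P|\,\d s(y)$ and vanishes since $p-P=O(|x|^{-1})$ at infinity. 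A Cauchy--Schwarz estimate on $\p\Omega$ then gives $|p(x)-P(x)|\le C\,\|\frac{\p P}{\p\nu}-\frac{\p p}{\p\nu}|_{+}\|_{L^2(\p\Omega)}$ for $x\in\R^2\setminus\overline\Omega$, which is exactly the bound obtained inside the proof of Theorem~\ref{thm-estimation}. It then remains to substitute the transmission condition $\frac{\p p}{\p\nu}|_{+}-\frac{\p p}{\p\nu}|_{-}=12\zeta_0\frac{\p\varphi}{\p\nu}$ from \eqref{electro-osmotic equation}, which turns the bracketed term into $\frac{\p P}{\p\nu}-\frac{\p p}{\p\nu}|_{-}-12\zeta_0\frac{\p\varphi}{\p\nu}$ and yields the first inequality.

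\emph{Shielding bound.} Take $p$ solving \eqref{electro-osmotic equation} with $p|_{-}=0$ on $\p\Omega$. In the interior Green representation \eqref{Gf-interior} over $\Omega\setminus\overline D$, the double-layer term vanishes since $p=0$ on $\p\Omega$, leaving $p(x)=-\int_{\p\Omega}G(x,y)\frac{\p p}{\p\nu}|_{-}(y)\,\d s(y)$; Cauchy--Schwarz then gives $|p(x)|\le C\,\|\frac{\p p}{\p\nu}|_{-}\|_{L^2(\p\Omega)}$ for $x\in\Omega\setminus\overline D$. Substituting $\frac{\p p}{\p\nu}|_{-}=\frac{\p p}{\p\nu}|_{+}-12\zeta_0\frac{\p\varphi}{\p\nu}$ produces the second inequality.

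I do not expect any genuine obstacle: the corollary is a bookkeeping consequence of the estimates already established in the proof of Theorem~\ref{thm-estimation}, obtained by feeding the $\p\Omega$-transmission condition of \eqref{electro-osmotic equation} into them. The only points requiring mild care are that the double-layer terms in the Green formulas drop out thanks to the Dirichlet data imposed on $\p\Omega$, that the constant at infinity $p_\infty-P_\infty$ is annihilated by the $O(|x|^{-1})$ decay, and that the constant $C$ enters only through $\|G(x,\cdot)\|_{L^2(\p\Omega)}$, hence is locally uniform in $x$ on $\R^2\setminus\overline\Omega$ (and on compact subsets of $\Omega\setminus\overline D$ in the shielding case).
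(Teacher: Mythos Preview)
Your proposal is correct and follows exactly the route the paper intends: the corollary is stated in the paper without a separate proof, merely flagged as a by-product of the argument for Theorem~\ref{thm-estimation}, and your write-up spells out precisely that argument---Green representations \eqref{Gf-exterior1}--\eqref{Gf-interior}, cancellation of the double-layer terms via the imposed Dirichlet data, Cauchy--Schwarz on the single-layer remainder, and substitution of the transmission condition on $\partial\Omega$ to pass from the one-sided normal trace to the mismatch quantity. Your handling of the constant $p_\infty-P_\infty$ (killed by the $O(|x|^{-1})$ decay) is in fact tidier than the paper's somewhat elliptical treatment of that term.
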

Corollary \ref{cor} shows that the error is minimized when the cost function $\mathcal{F}(\zeta_0)$ (or $\mathcal{G}(\zeta_0)$) is minimized. It will be further corroborated by our numerical experiments in Section \ref{sec-NumSim} in what follows.

\section{Numerical experiments}\label{sec-NumSim}

In this section, we validate the theoretical results based on a two-dimensional model by performing three-dimensional finite-element simulations, which shows nice
agreement. We perform the finite-element numerical simulations using the commercial software COMSOL Multiphysics.

Before showing the numerical results, we give the values of the physical and geometrical parameters used in the finite-element simulations, which are summarized in the following table.
\begin{table}[H]
  \centering
  \begin{tabular}{cccc}
\centering
  Physical property & Notation & Value &Units \\
  \hline
  Gap between the plates & $\tilde{h}$ & 15 & $\mu \mathrm{m}$ \\
  Length of computational domain & $\tilde{L}$ & 2 &$\mathrm{mm}$\\
  Width of computational domain& $\tilde{W}$  & 2&$\mathrm{mm}$\\
  Density of fluid & $\rho$ & $10^{3}$ &$\mathrm{Kg}/\mathrm{m}^{3}$\\
  Viscosity of fluid & $\mu$ & $10^{-3}$ &$\mathrm{Pa} \cdot \mathrm{s}$\\
  Permittivity of fluid  & $\varepsilon$ & $7.08 \times 10^{-10}$ & $\mathrm{F}/\mathrm{m}$\\
  Electric field far from object& $\tilde{E}$ & $3\times 10^{2}  $ & $\mathrm{V}/\mathrm{m}$\\
  External velocity & $\tilde{u}_{ext}$ & $51$ & $\mu \mathrm{m} / \mathrm{s}$\\
  \hline
\end{tabular}
  \caption{Values of the physical and geometrical parameters used in the finite-element simulations.}
\end{table}

We first perform finite-element simulations of the flow around a circular cylinder of radius $100\  \mu \mathrm{m}$ in a Hele-Shaw cell. The radius of the cloaking and shielding region is $200\  \mu \mathrm{m}$. We derive the important quantity, that is, the characteristic value of $\tilde{\zeta}_0$.
Scaling (\ref{annulus-p}) by the characteristic dimensions, we know that the pressure distribution is \cite{Boyko2021}
 \begin{align}\label{annulus-p-tilde}
 \tilde{p}=
  \begin{cases}
\ds \frac{6\mu}{\tilde{h}^2 \tilde{r}_e^{2}}\Big((\tilde{r}_e^{2}-\tilde{r}_i^{2})\tilde{u}_{EOF}-2\tilde{r}_e^{2}\tilde{u}_{ext}\Big)\Big(\tilde{r}+\frac{\tilde{r}_i^{2}}{\tilde{r}}\Big)\cos\theta,\quad \tilde{r}_i<\tilde{r}<\tilde{r}_e,
\vspace{1em}\\
\ds -\frac{12 \mu \tilde{u}_{ext}}{\tilde{h}^2} \tilde{r}\cos\theta+\frac{6\mu}{\tilde{h}^2 \tilde{r}_e^{2}}\Big((\tilde{r}_e^{4}-\tilde{r}_i^{4})\tilde{u}_{EOF}- 2\tilde{r}_i^{2} \tilde{r}_e^{2}\tilde{u}_{ext}\Big)\frac{1}{\tilde{r}}\cos\theta,\quad \tilde{r}>\tilde{r}_e.
\end{cases}
 \end{align}
 Where $\tilde{u}_{EOF}= -\varepsilon \tilde{\zeta}_0 \tilde{E}/\mu$, $\tilde{r}_i = 100\  \mu \mathrm{m}$ and $\tilde{r}_e = 200\  \mu \mathrm{m}$.  From (\ref{annulus-p-tilde}), we can obtain shielding and cloaking conditions
 \begin{align*}
\tilde{u}_{EOF} =\frac{2\tilde{r}_e^{2}}{\tilde{r}_e^{2}-\tilde{r}_i^{2}}\tilde{u}_{ext} \quad \mbox{and} \quad \tilde{u}_{EOF}= \frac{2\tilde{r}_i^{2} \tilde{r}_e^{2}}{\tilde{r}_e^{4}-\tilde{r}_i^{4}}\tilde{u}_{ext}.
 \end{align*}
 Furthermore, in terms of  $\tilde{\zeta}_0$, we have shielding and cloaking conditions
 \begin{align}\label{shielding-cloaking-condition}
\tilde{\zeta}_0 =-\frac{\mu}{\varepsilon\tilde{E} }\frac{2\tilde{r}_e^{2}}{\tilde{r}_e^{2}-\tilde{r}_i^{2}}\tilde{u}_{ext} \quad \mbox{and} \quad \tilde{\zeta}_0= -\frac{\mu}{\varepsilon\tilde{E} } \frac{2\tilde{r}_i^{2} \tilde{r}_e^{2}}{\tilde{r}_e^{4}-\tilde{r}_i^{4}}\tilde{u}_{ext}.
 \end{align}
 The equations in (\ref{shielding-cloaking-condition}) are equivalent to
 \begin{align}\label{shielding-cloaking-condition-equivalence}
\tilde{\zeta}_0 =-\frac{2r_e^{2}}{r_e^{2}-r_i^{2}}\frac{\mu}{\varepsilon\tilde{E}}\tilde{u}_{ext} \quad \mbox{and} \quad \tilde{\zeta}_0= - \frac{2r_i^{2} r_e^{2}}{r_e^{4}-r_i^{4}}\frac{\mu}{\varepsilon\tilde{E}}\tilde{u}_{ext}.
 \end{align}
Comparing the dimensionless shielding and cloaking conditions (\ref{annulus-cloaking-zeta}, \ref{annulus-shielding-zeta}) and (\ref{shielding-cloaking-condition-equivalence}), we find that the difference is a factor. In fact, the factor is precisely the characteristic value of $\tilde{\zeta}_0$.

Figure \ref{fig-circle} presents a comparison of the finite-element simulation results corresponding to pressure-driven flow (a), shielding (b), and cloaking (c) conditions. Figures \ref{fig-circle}(a)--\ref{fig-circle}(c) present the resulting pressure distribution (colormap) and streamlines (white lines), showing excellent shielding and cloaking in the finite-element simulations for the cylinder object.  Under shielding conditions (\ref{annulus-shielding-zeta}), the pressure inside the region surrounding the cylinder becomes uniform, and the force on the object vanishes. Under cloaking conditions (\ref{annulus-cloaking-zeta}), the streamlines outside of the control region are straight, unmodified relative to the uniform far field, and undisturbed by the object.
\begin{figure}[H]
	\centering  
	\subfigcapskip=-10pt 
	\subfigure[]{
		\includegraphics[width=0.32\linewidth]{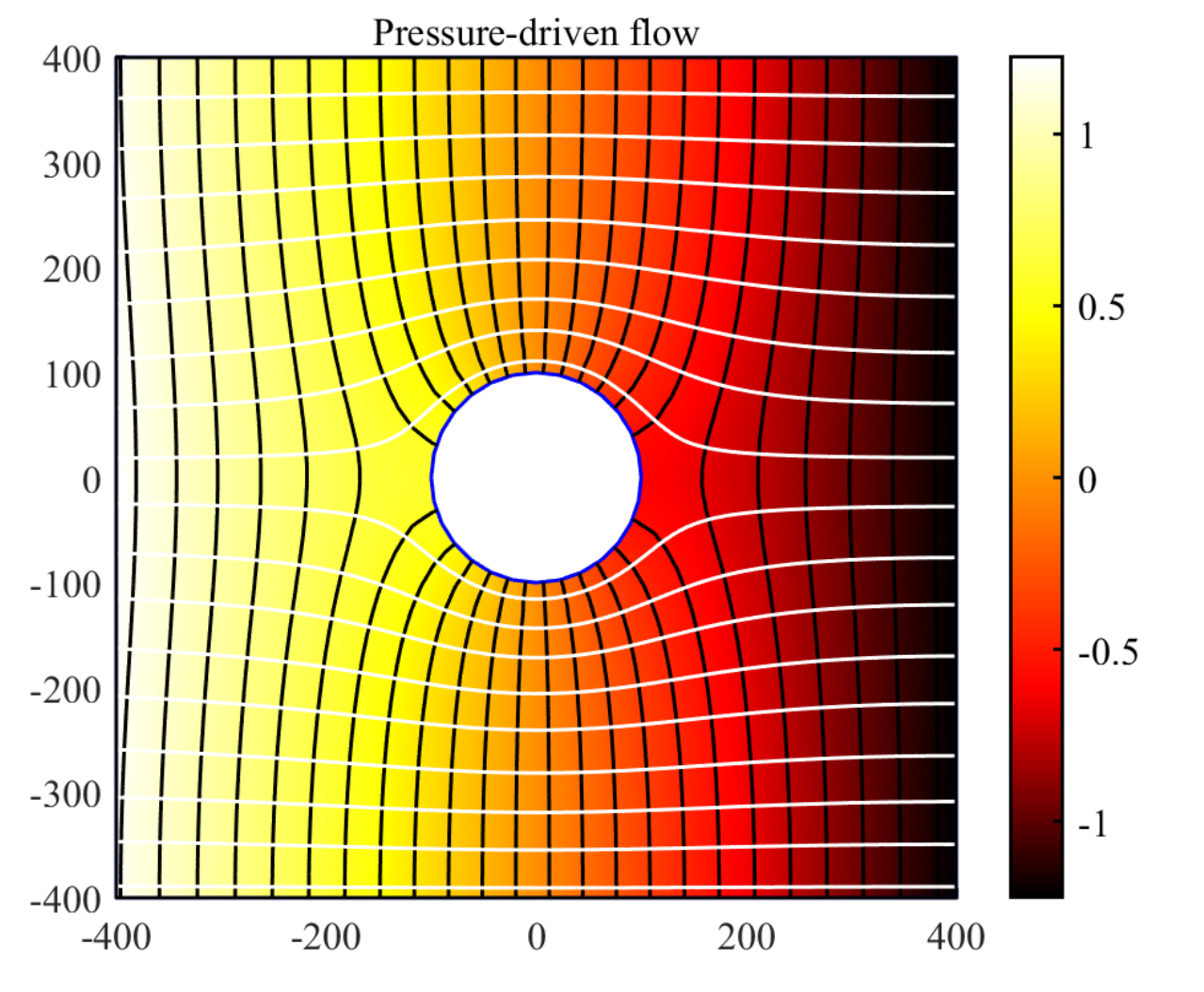}}
	\subfigure[]{
		\includegraphics[width=0.32\linewidth]{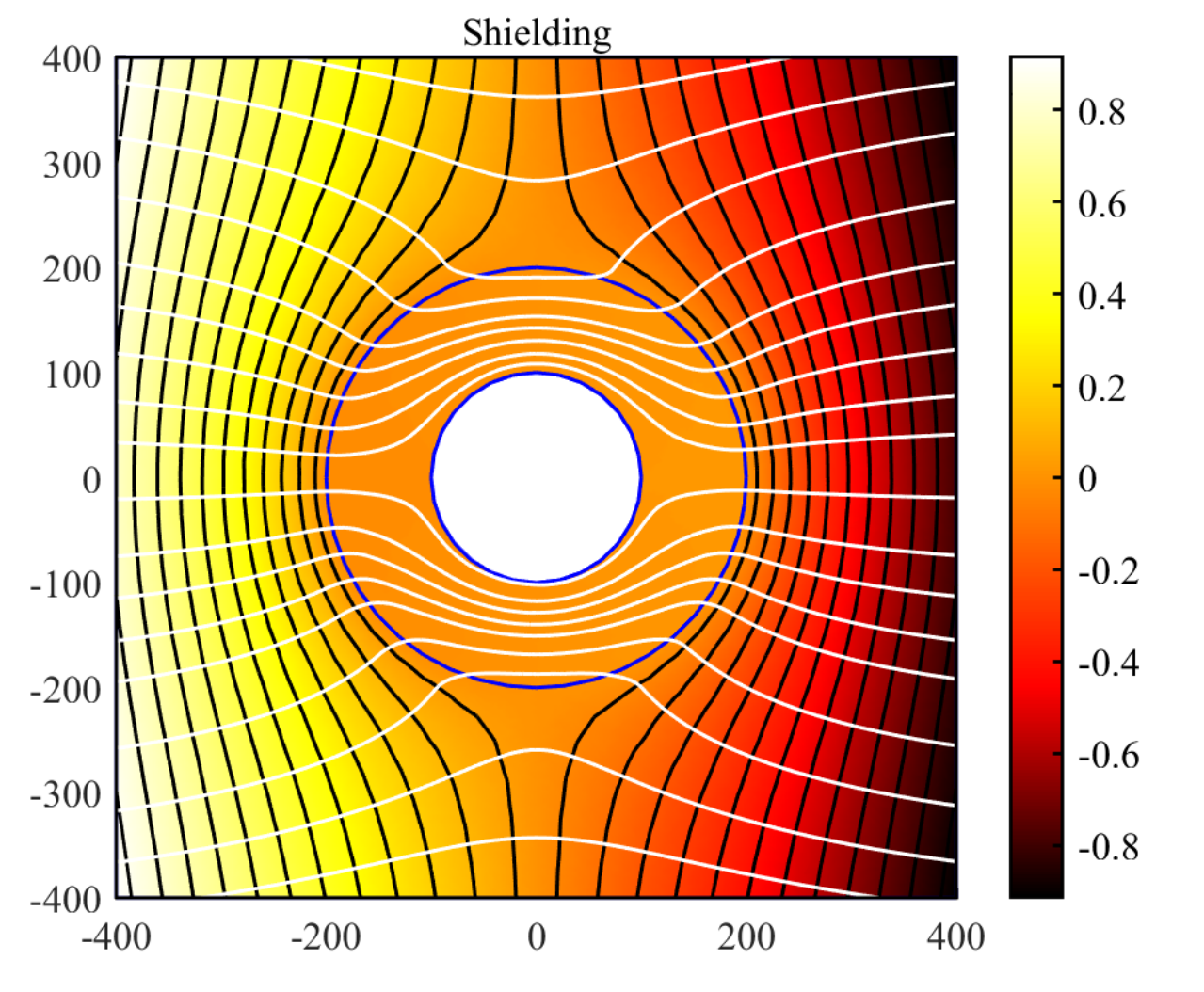}}
	\subfigure[]{
		\includegraphics[width=0.32\linewidth]{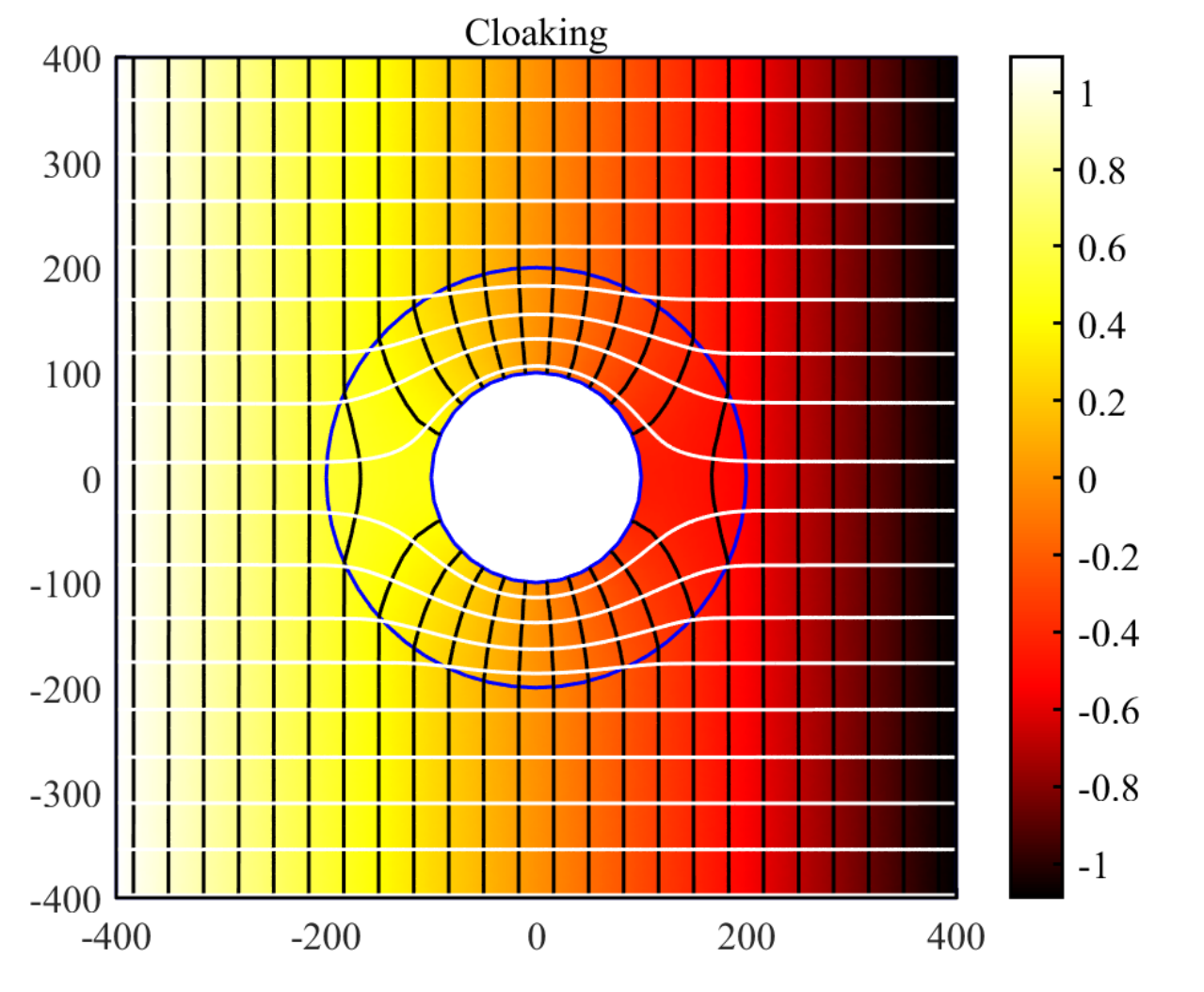}}
	\caption{Comparison of finite-element simulation results on the annulus. Numerical (a,b,c) results for the pressure distribution (colormap) and streamlines (white lines),
corresponding to pressure-driven flow (a), shielding (b), and cloaking (c) conditions. Here in the case of shielding, the zeta potential is $\tilde{\zeta}_0=-0.64\, \mathrm{V}$, and in the case of cloaking the zeta potential is $\tilde{\zeta}_0=-0.128 \,\mathrm{V}$. }\label{fig-circle}
\end{figure}

We perform finite-element simulations of the flow around an elliptic cylinder in a Hele-Shaw cell.  We consider an elliptic cylinder of boundary $\tilde{\xi}_i$ wrapped by a region of interior and exterior boundaries $\tilde{\xi}_i$ and $\tilde{\xi}_e$, choosing $\tilde{\xi}_i= 50 \  \mu \mathrm{m}$ and $\tilde{\xi}_e= 100 \  \mu \mathrm{m}$.  It is remarked that the inner and outer ellipses are of the same focus. We first consider a uniform electric filed and velocity externally applied in the $\tilde{x}_1$ direction. The zeta potentials $\tilde{\zeta}_0$ of cloaking and shielding region depend on the equations (\ref{ellipse-cloaking-zeta-x}, \ref{ellipse-shielding-zeta-x}) and the characteristic value of $\tilde{\zeta}_0$. Figures \ref{fig-ellipse}(a)--\ref{fig-ellipse}(c) present the resulting pressure distribution (colormap) and streamlines (white lines) with the background electric field and velocity in the $\tilde{x}_1$ direction. Analogously, when the background electric field and velocity are externally applied in the $\tilde{x}_2$ direction, under conditions  (\ref{ellipse-cloaking-zeta-y}) and (\ref{ellipse-shielding-zeta-y}) we can obtain numerical results illustrated in Figures \ref{fig-ellipse}(d)--\ref{fig-ellipse}(f).  These results also show excellent agreement like the circular cylinder case. The performance of the proposed cloaking and shielding conditions have been numerically confirmed. Moreover, it also is verified  that the confocal ellipses have the same shielding condition, comparing Figure \ref{fig-ellipse}(b) and Figure \ref{fig-ellipse}(e).
\begin{figure}[H]
	\centering  
	\subfigcapskip=-10pt 
	\subfigure[]{
		\includegraphics[width=0.32\linewidth]{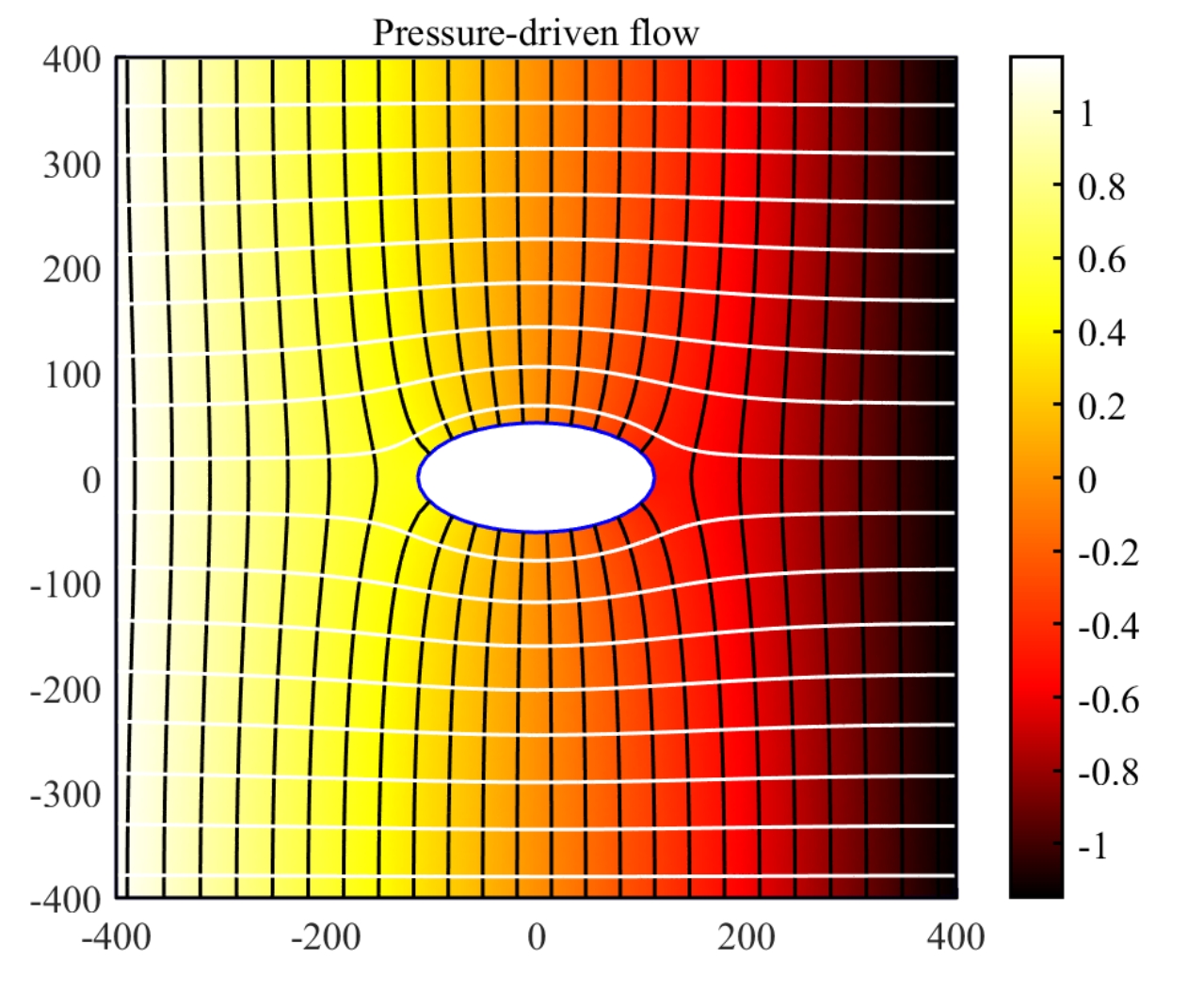}}
	\subfigure[]{
		\includegraphics[width=0.32\linewidth]{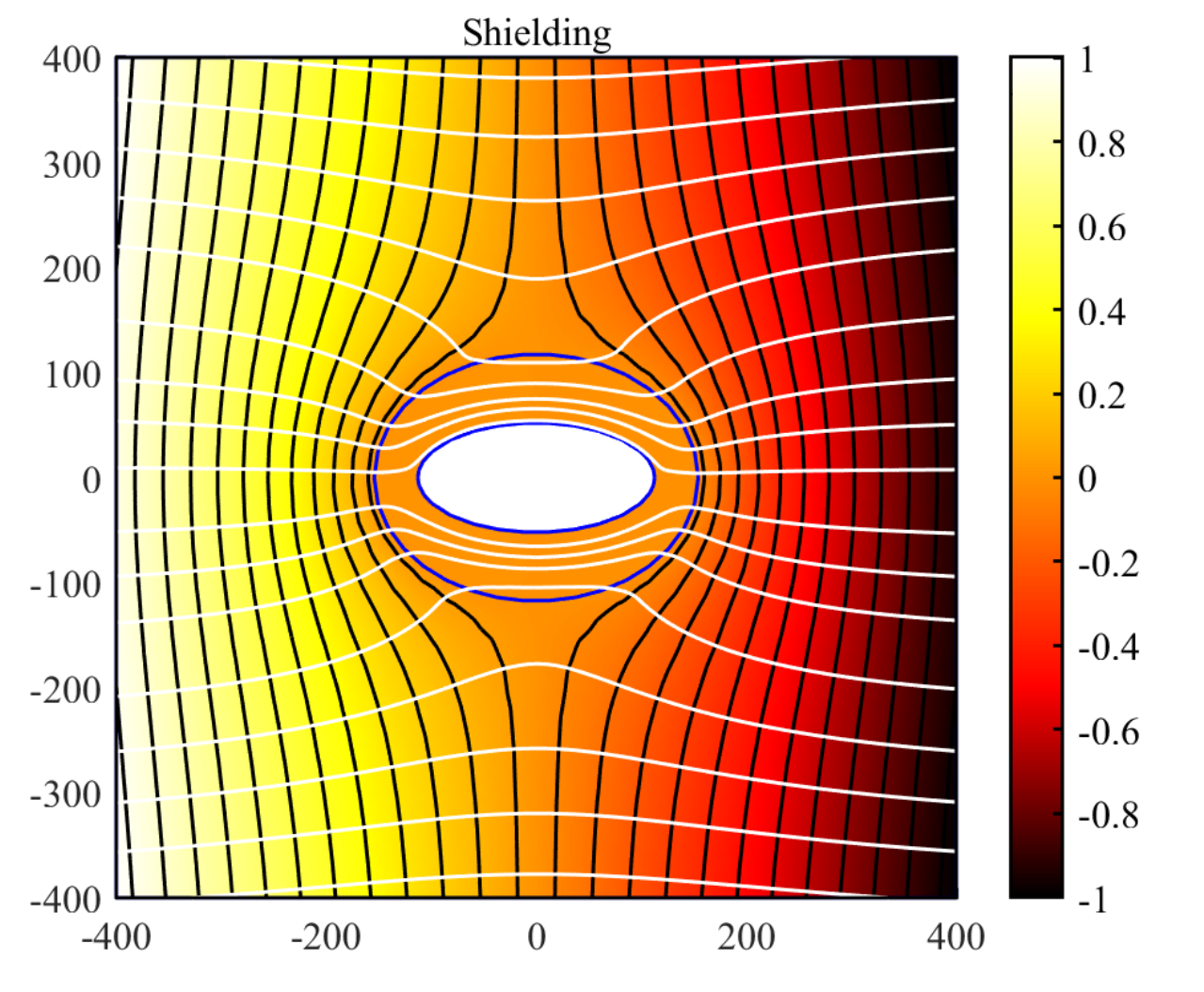}}
	\subfigure[]{
		\includegraphics[width=0.32\linewidth]{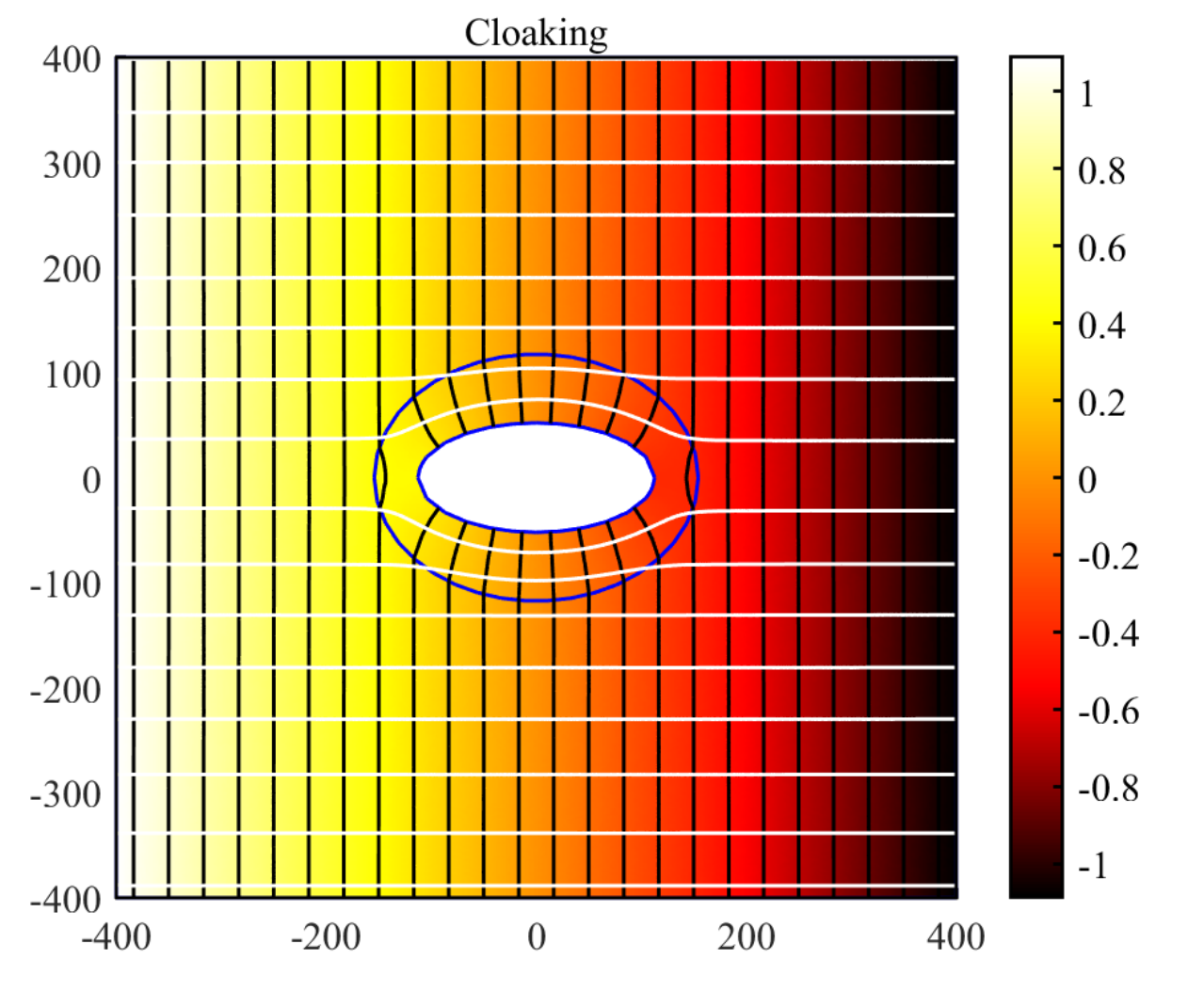}}\\
	\subfigure[]{
		\includegraphics[width=0.32\linewidth]{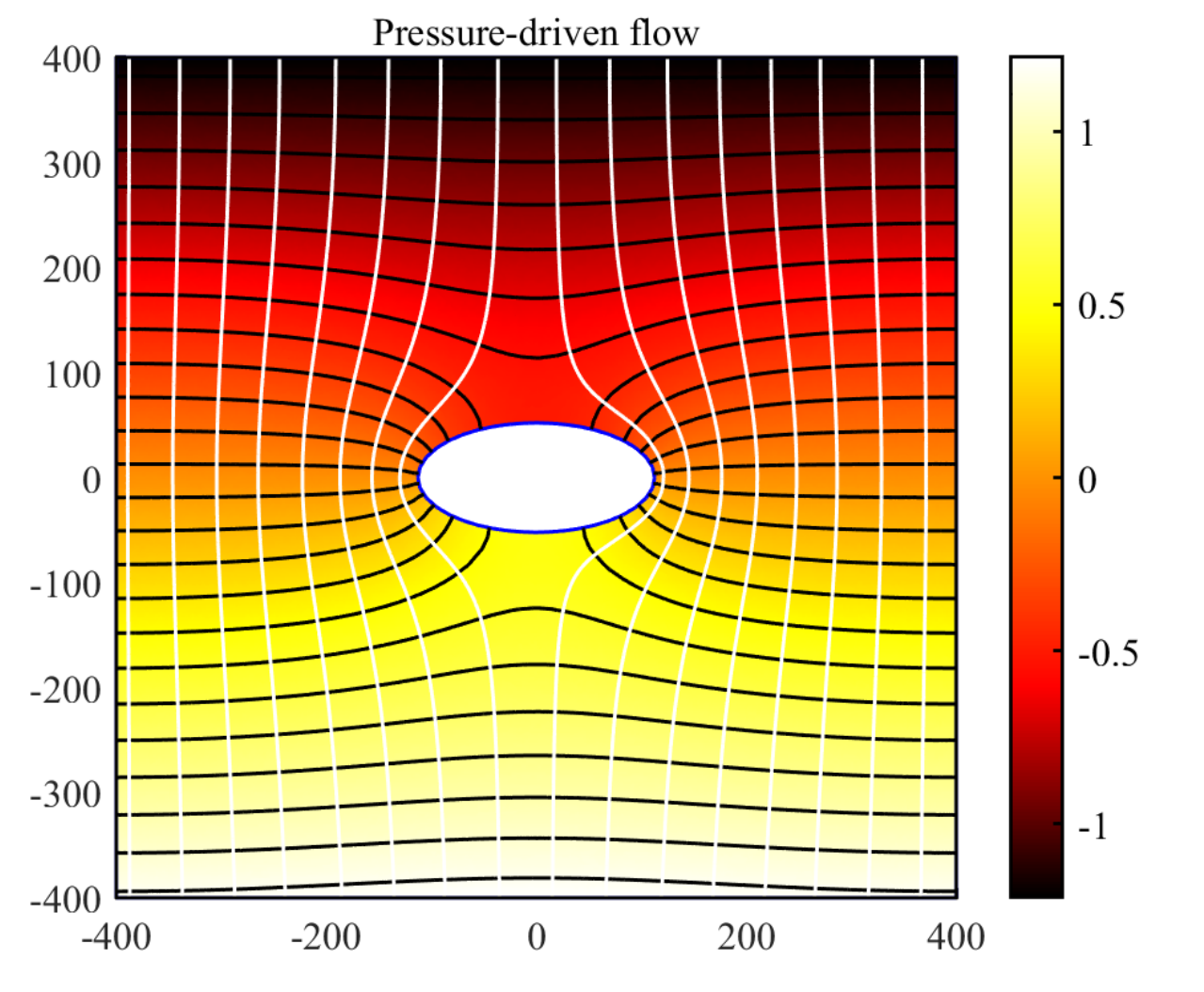}}
	\subfigure[]{
		\includegraphics[width=0.32\linewidth]{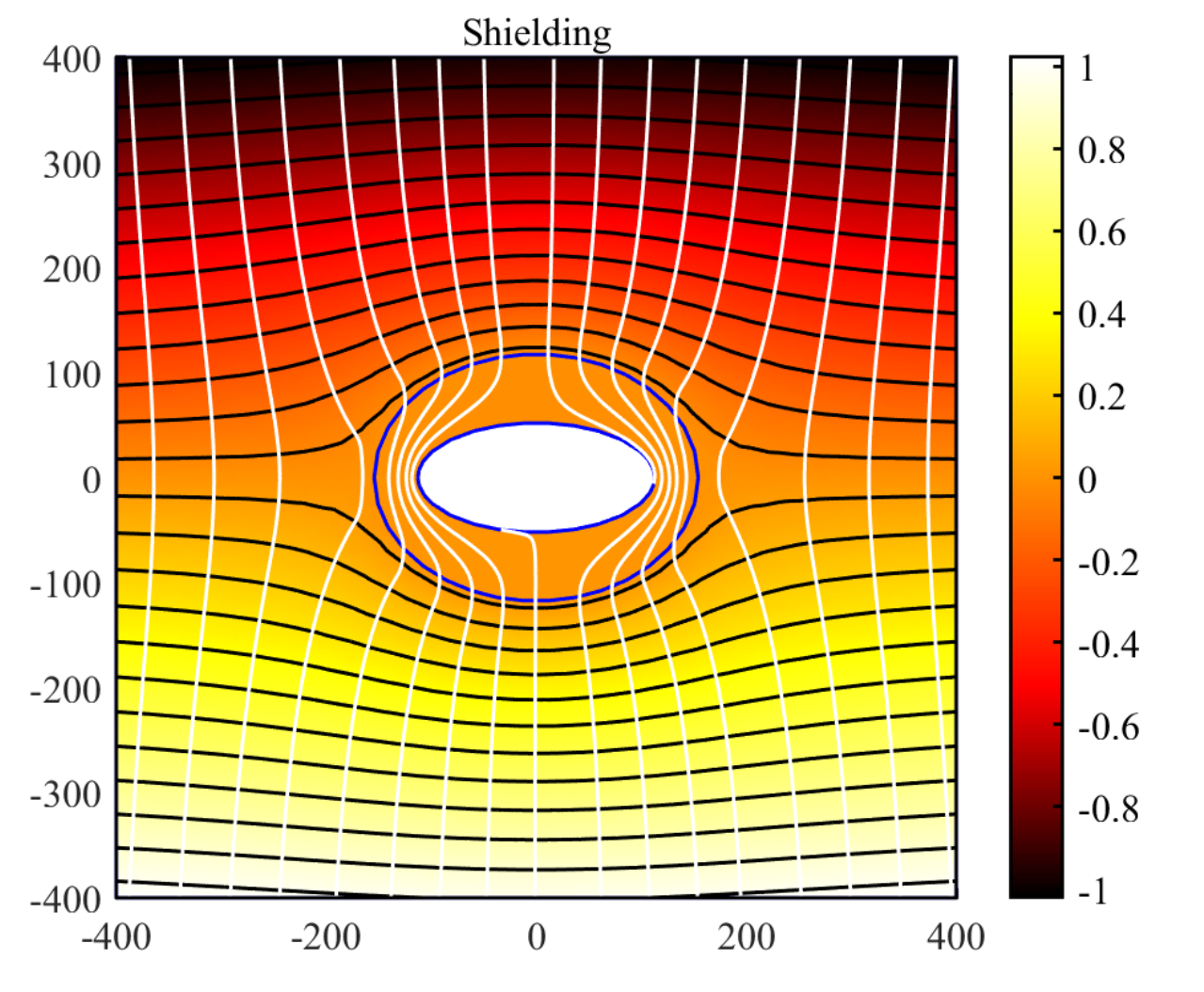}}
	\subfigure[]{
		\includegraphics[width=0.32\linewidth]{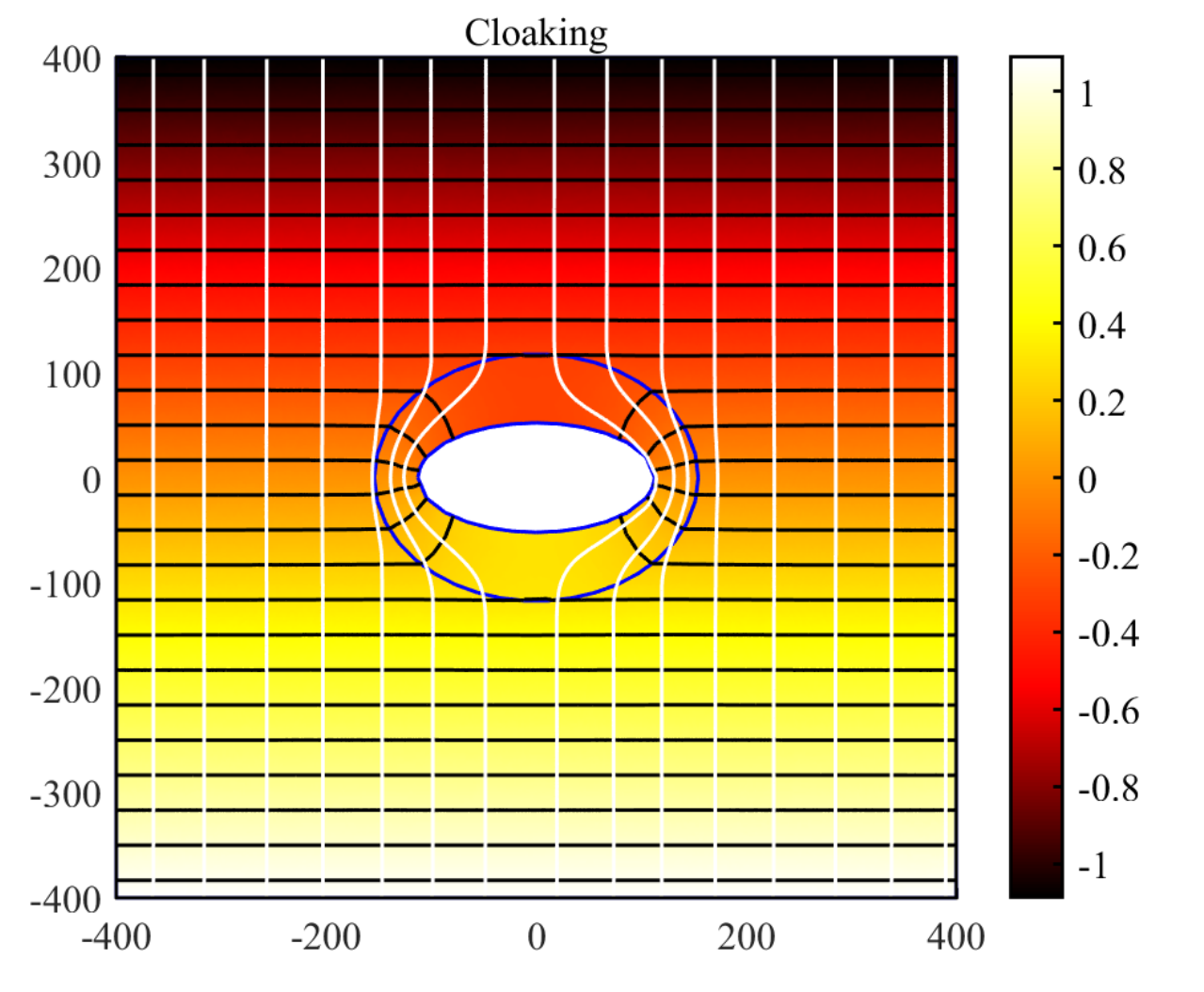}}
	\caption{Comparison of finite-element simulation results on the confocal ellipses with background field in the $\tilde{x}_1$ (Top: a,b,c) and $\tilde{x}_2$ direction (Bottom: d,e,f). The zeta potentials are $\tilde{\zeta}_0 = -0.7593  \ \mathrm{V} (b), -0.1291  \ \mathrm{V} (c), -0.7593  \ \mathrm{V} (e)$ and $-0.2793  \ \mathrm{V} (f)$, respectively.}\label{fig-ellipse}
\end{figure}

In Figure \ref{fig-thin}, we consider thin cloaking region for annulus and confocal ellipses with $\tilde{r}_i =100  \  \mu \mathrm{m}$,   $\tilde{r}_e =110 \  \mu \mathrm{m}$, $\tilde{\xi}_i =50 \  \mu \mathrm{m}$ and   $\tilde{\xi}_e =70 \  \mu \mathrm{m}$. These results show that thin region can also have excellent cloaking. However, these regions are not so thin that meta-surfaces occur because the cloaking conditions are singular.
\begin{figure}[H]
	\centering  
	\subfigcapskip=-10pt 
	\subfigure[]{
		\includegraphics[width=0.32\linewidth]{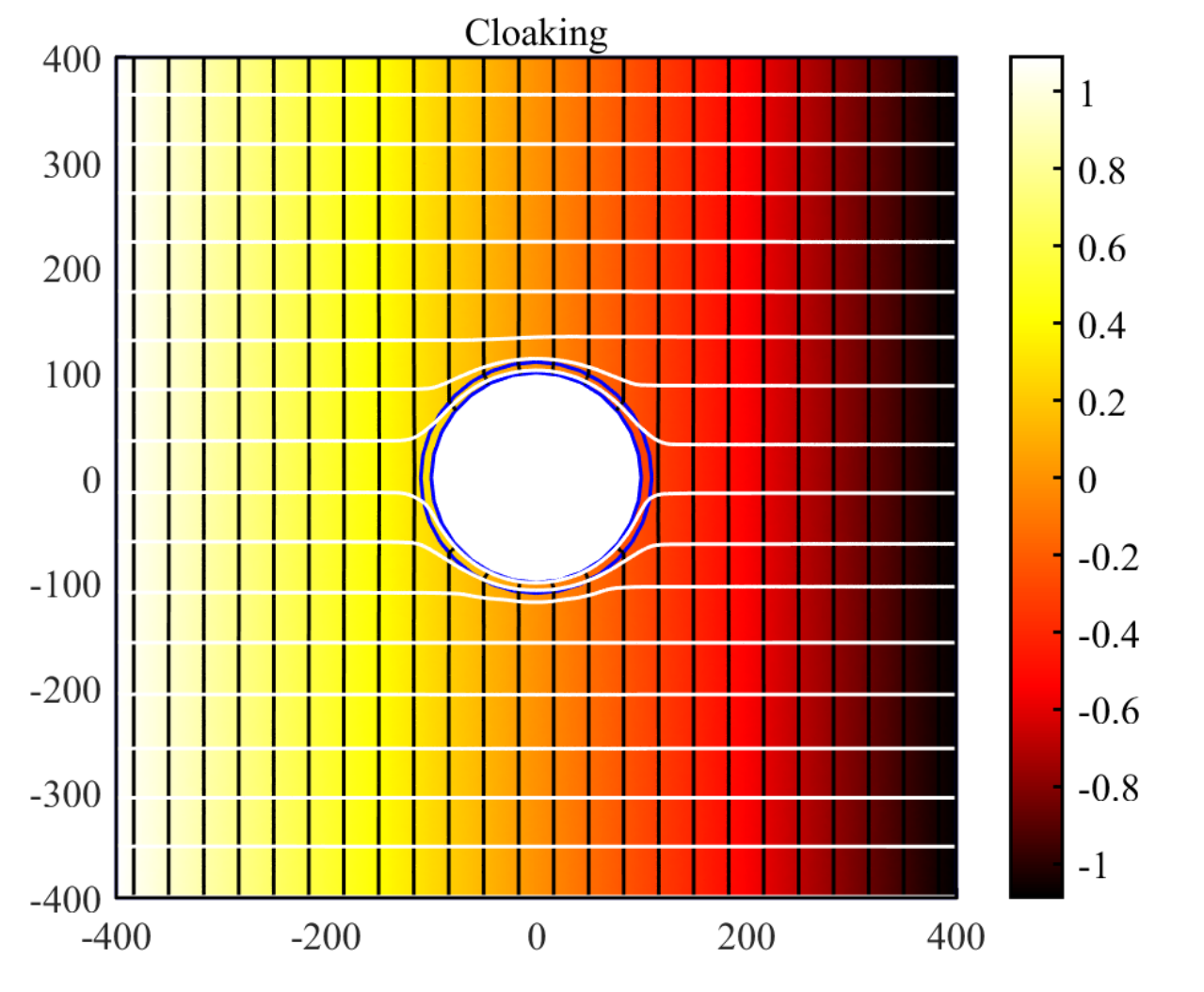}}
	\subfigure[]{
		\includegraphics[width=0.32\linewidth]{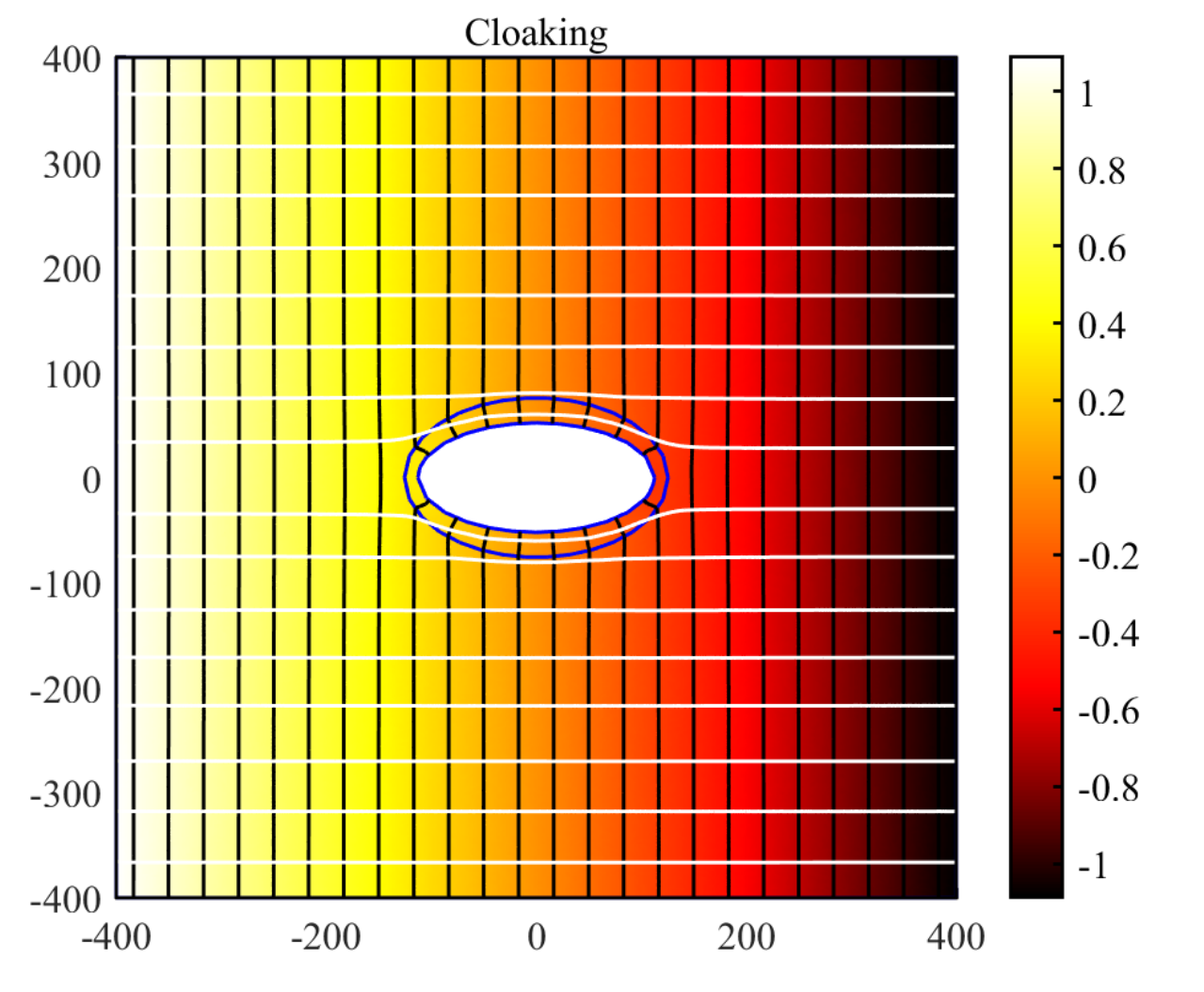}}
	\subfigure[]{
		\includegraphics[width=0.32\linewidth]{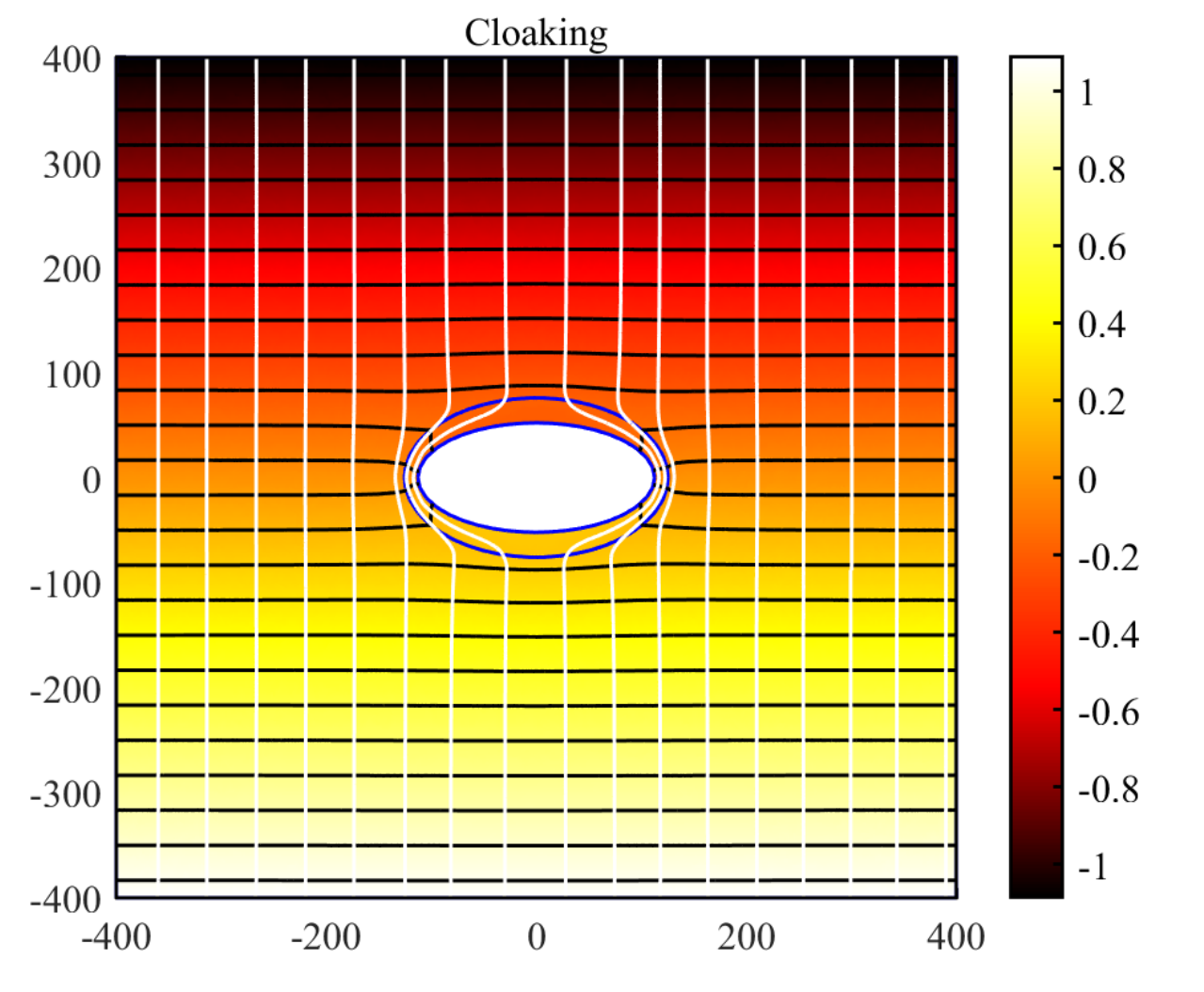}}
	\caption{Comparison of finite-element simulation results with thin cloaking region. The zeta potentials are $\tilde{\zeta}_0 = -1.2515\ \mathrm{V} (a), -0.3693  \ \mathrm{V} (b)$ and $-0.7992  \ \mathrm{V} (c)$, respectively.}\label{fig-thin}
\end{figure}

\begin{figure}[H]
	\centering  
	\subfigcapskip=-10pt 
	\subfigure[]{
		\includegraphics[width=0.32\linewidth]{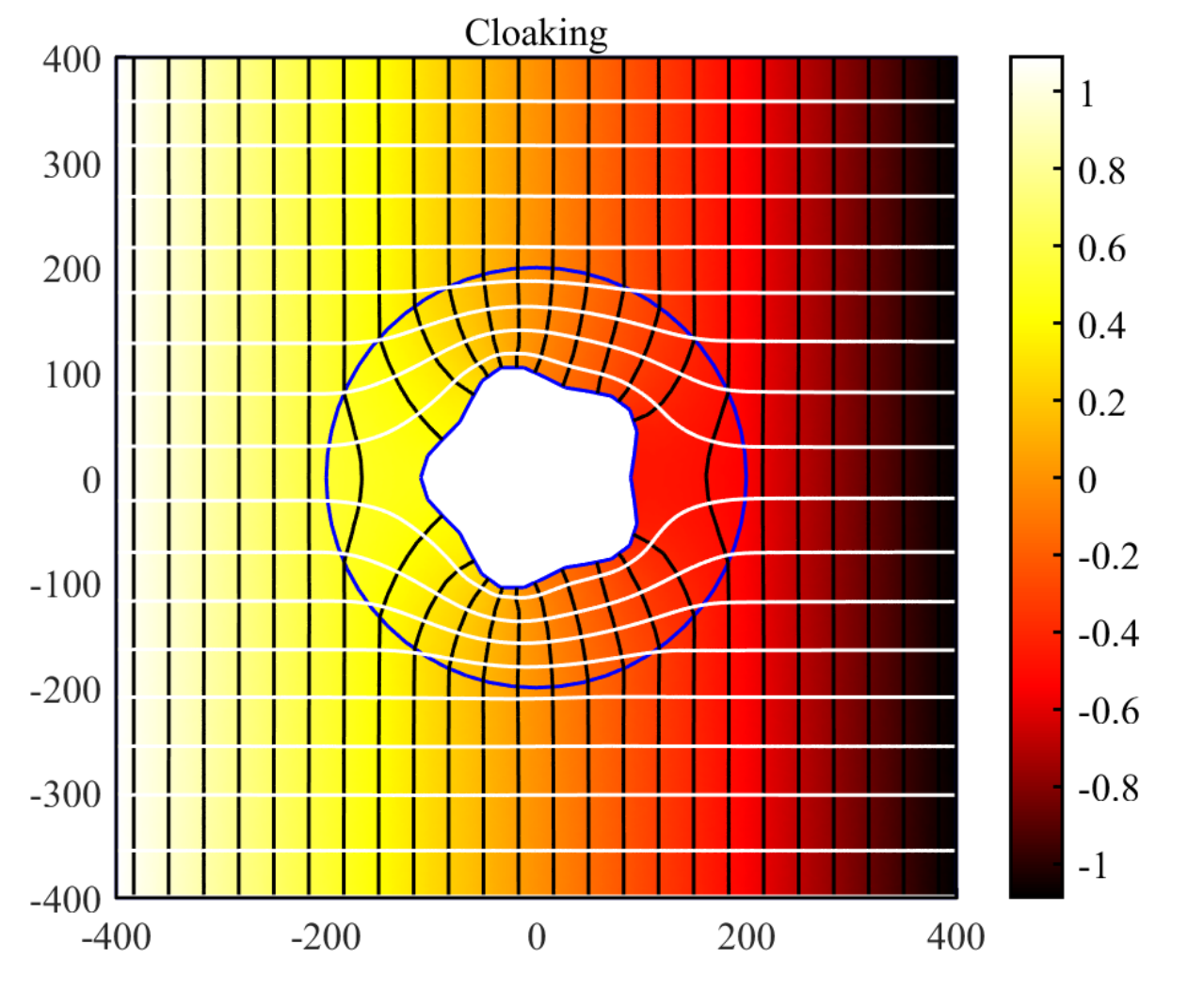}}
	\subfigure[]{
		\includegraphics[width=0.32\linewidth]{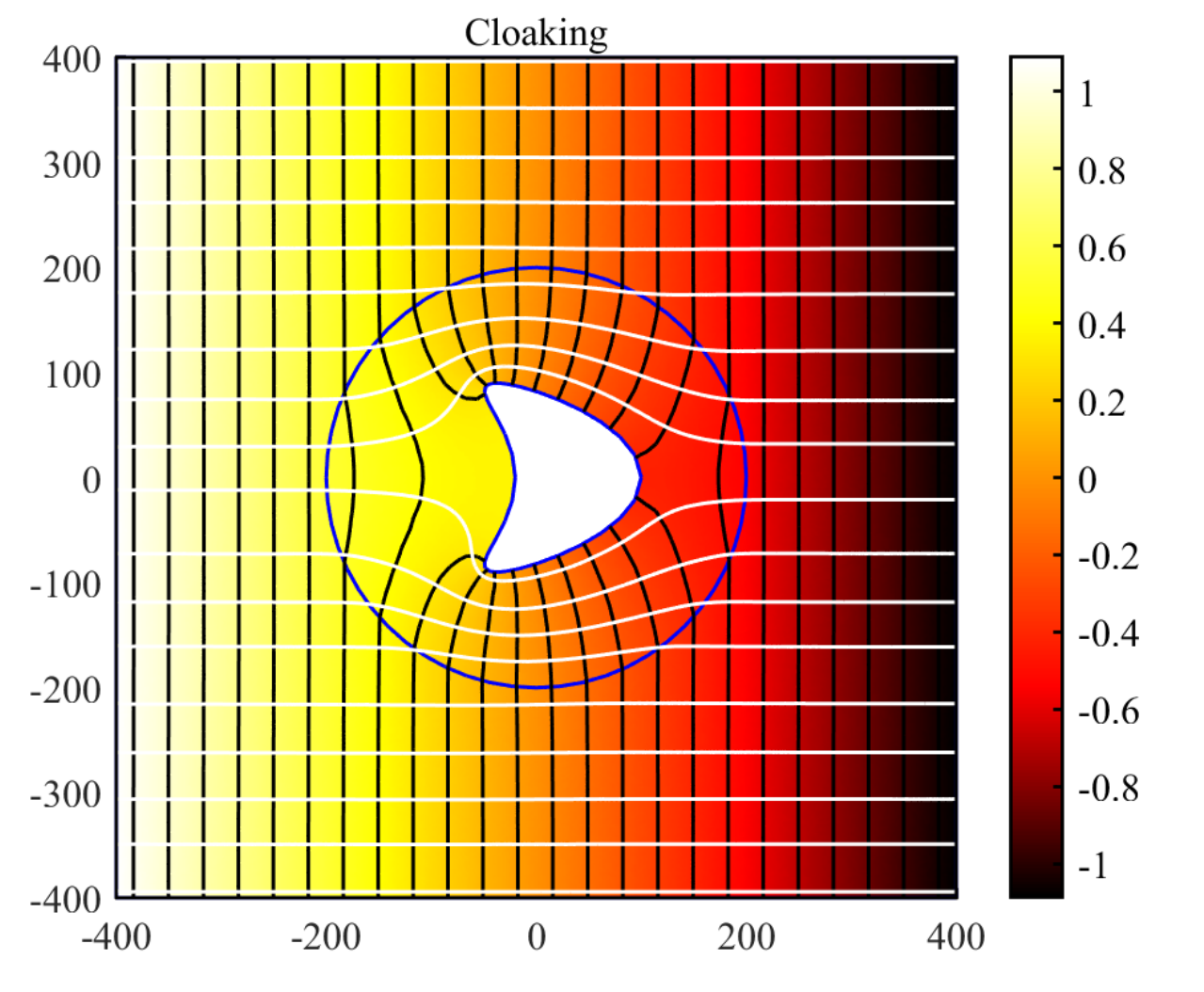}}
	\subfigure[]{
		\includegraphics[width=0.32\linewidth]{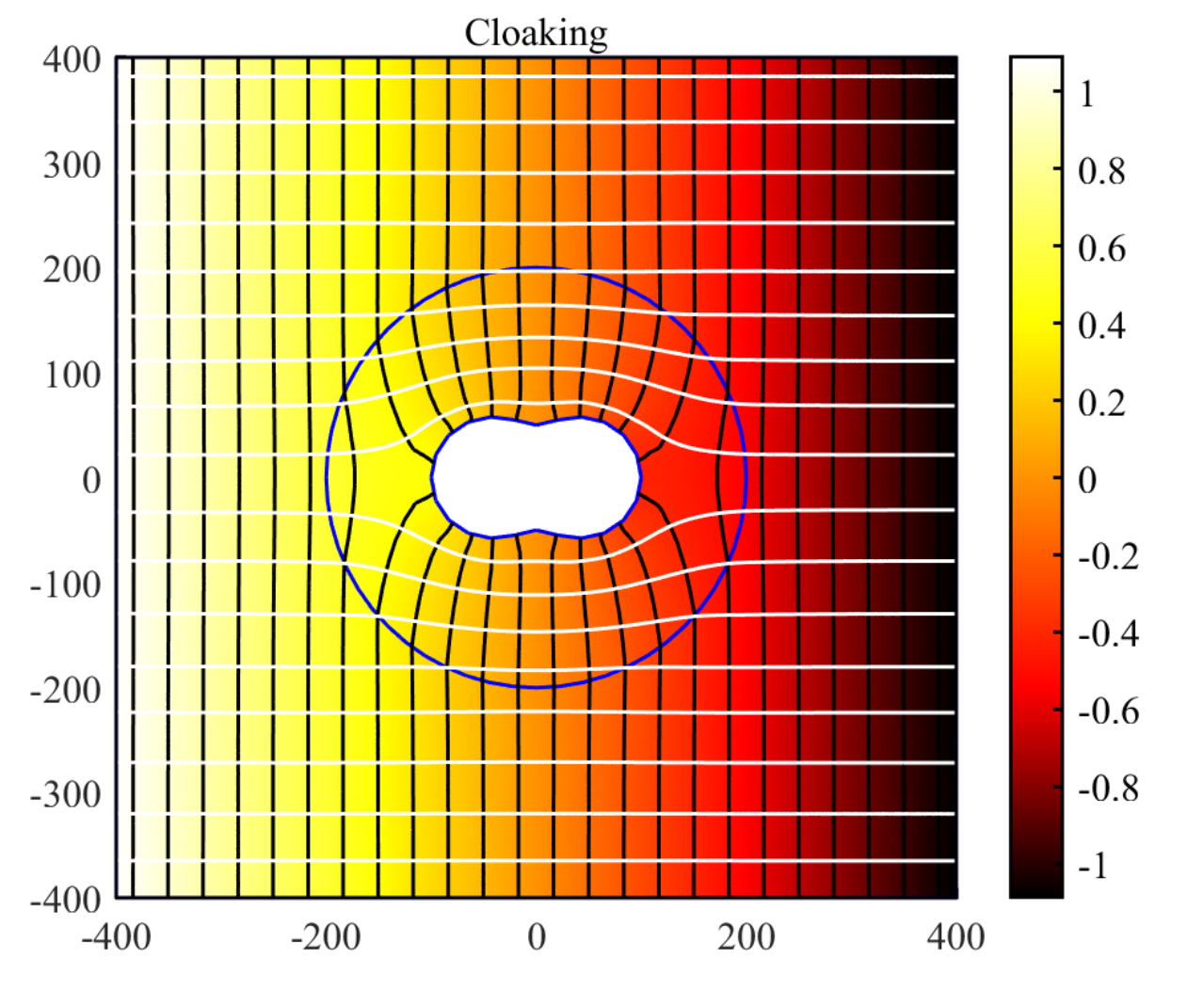}}\\
	\subfigure[]{
		\includegraphics[width=0.32\linewidth]{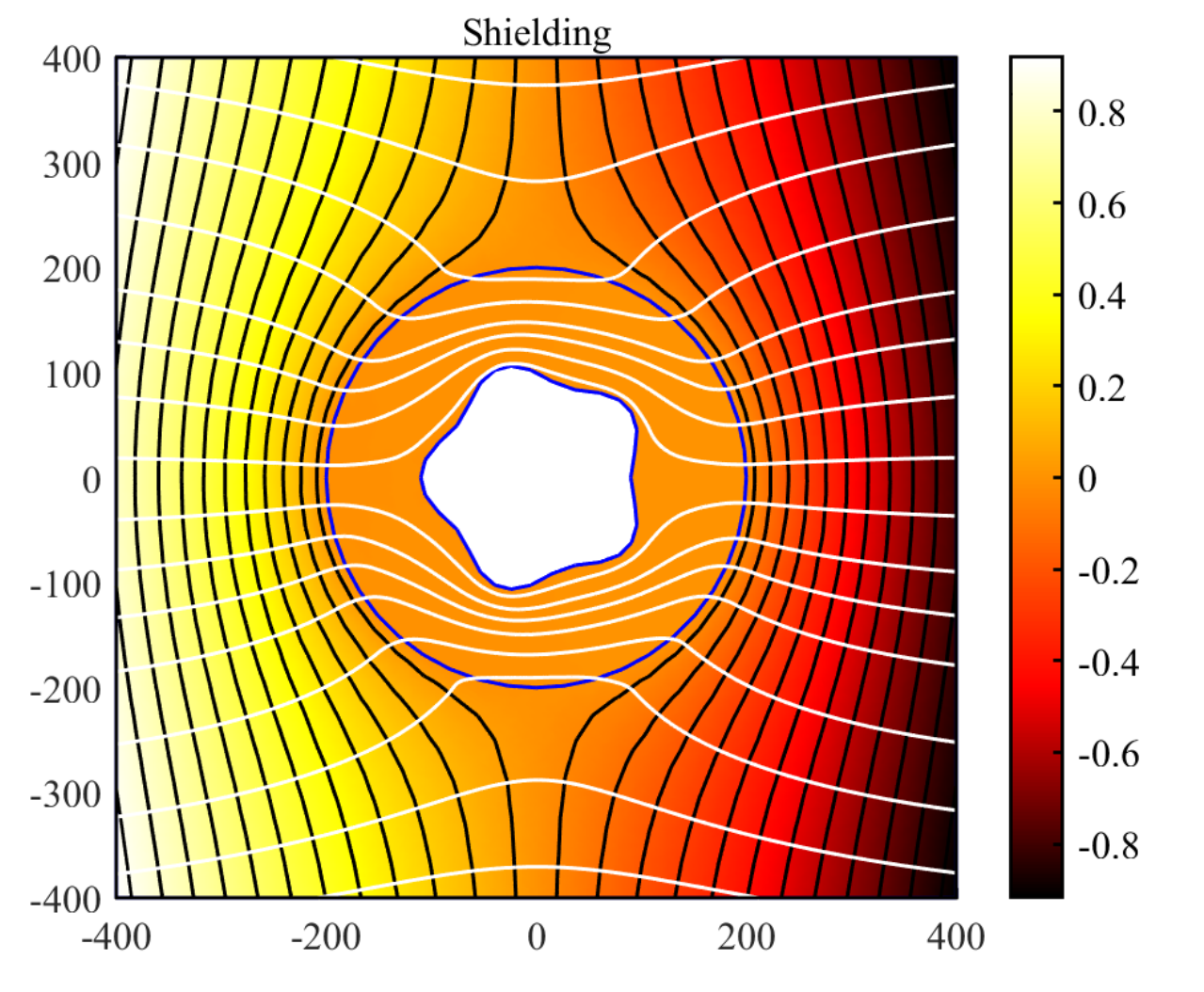}}
	\subfigure[]{
		\includegraphics[width=0.32\linewidth]{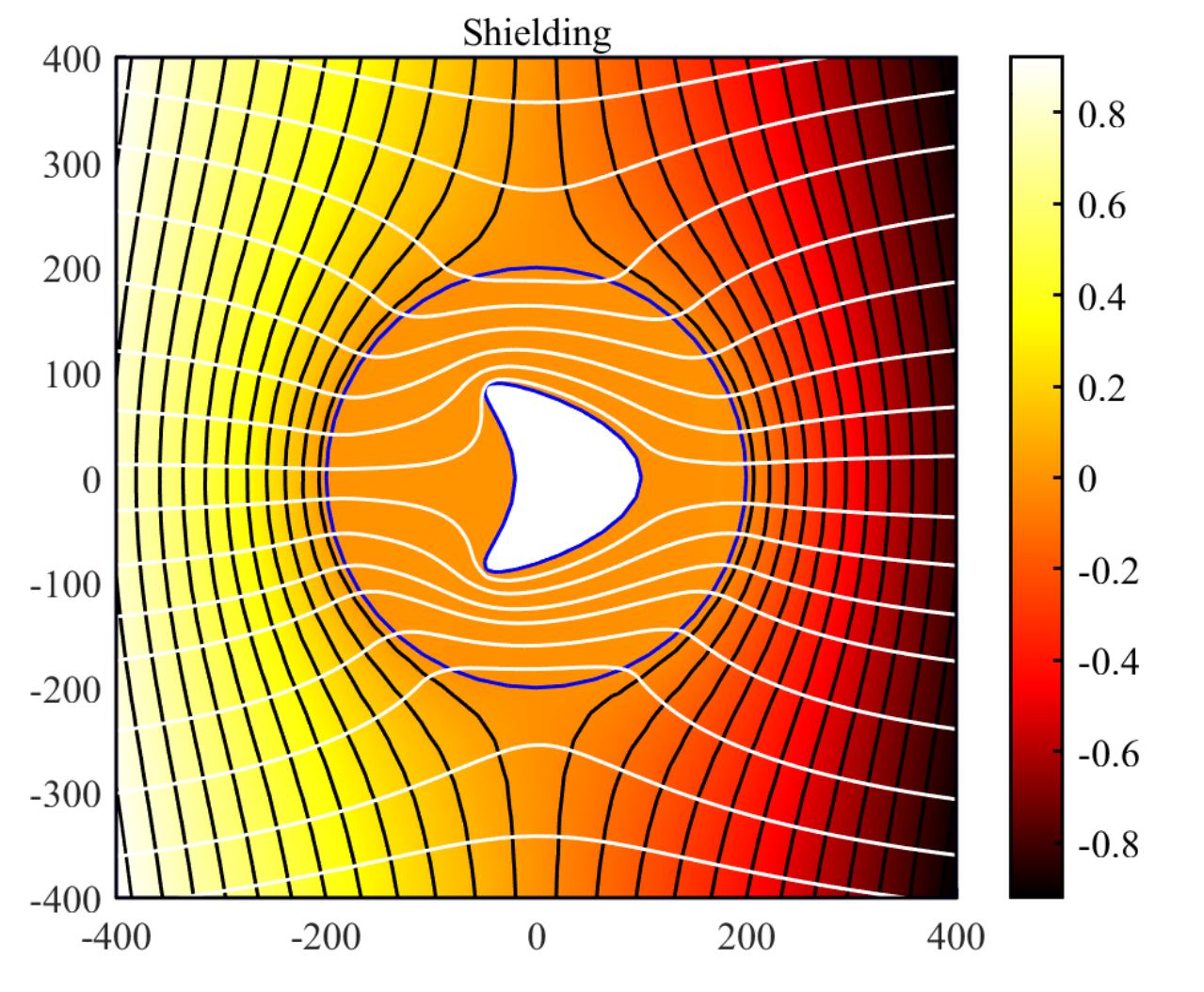}}
	\subfigure[]{
		\includegraphics[width=0.32\linewidth]{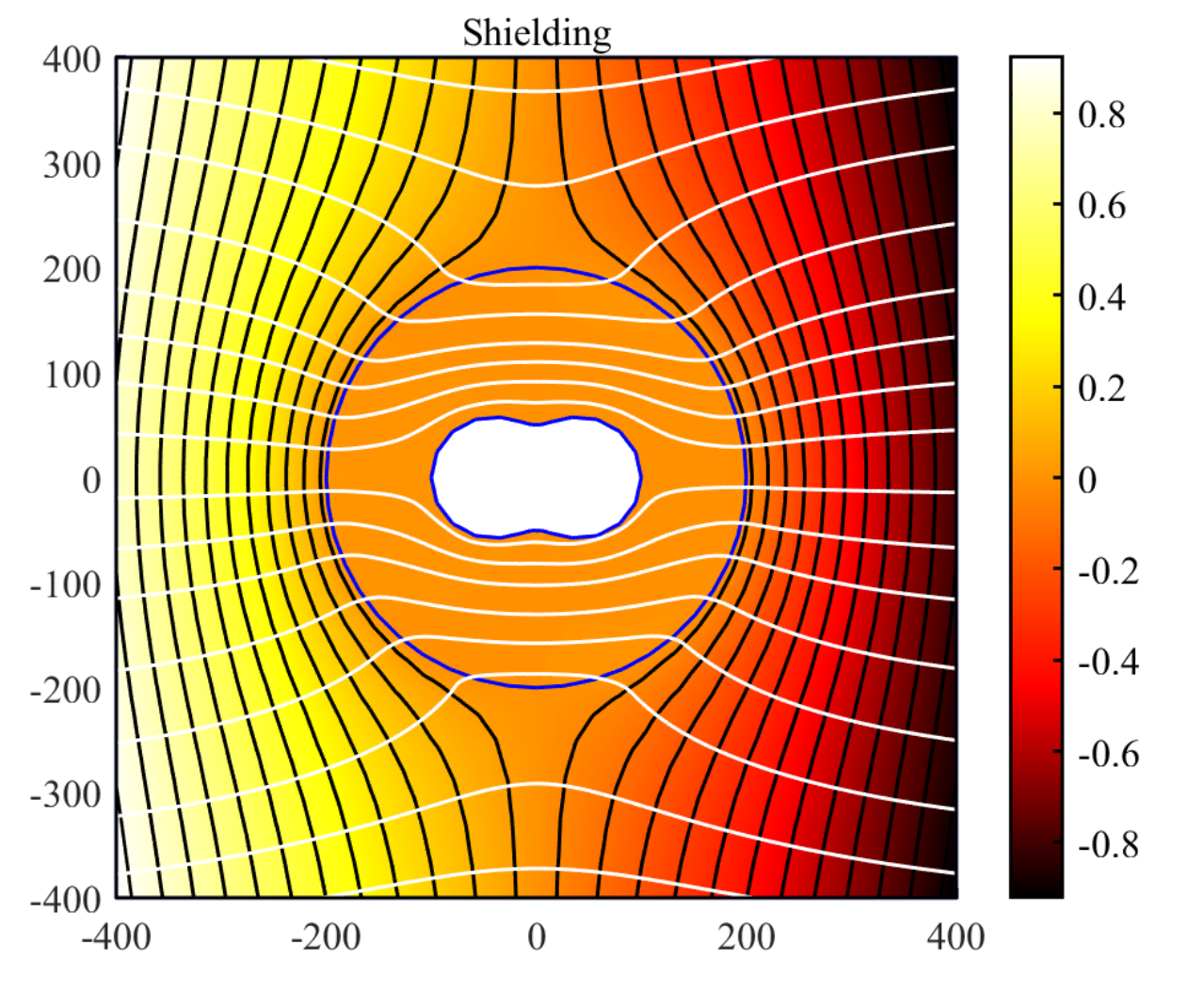}}
	\caption{Comparison of finite-element simulation results for some objects with regular boundaries. Numerical results for the pressure distribution (colormap) and streamlines (white lines)
corresponding to cloaking (a-c) and shielding (d-f). The zeta potentials are $\tilde{\zeta}_{0, opt} = -0.1288\ \mathrm{V} (a), -0.0864  \ \mathrm{V} (b), -0.0751  \ \mathrm{V} (c), -0.6456\ \mathrm{V} (d), -0.5832  \ \mathrm{V} (e)$ and $-0.5664  \ \mathrm{V} (f)$, respectively. The cloaking regions are circles with a radius of $200 \ \mu \mathrm{m}$.}\label{fig-regular-shape}
\end{figure}

For a benchmarking experiment, we consider the flow of a cylinder with a flower-shaped cross-section with boundary described by the parametric representation
\begin{equation*}
  x(t) = 1-0.1\cos 5t, \quad 0\leq t \leq 2\pi.
\end{equation*}
Extending our analysis to more complex shapes, we also considered the case of a non-convex kite-shaped object parameterized by
\begin{equation*}
  x(t) = (0.6\cos t + 0.39\cos 2t + 0.01, 0.9\sin t), \quad 0\leq t \leq 2\pi,
\end{equation*}
and a peanut-shaped object parameterized by
\begin{equation*}
  x(t) = \sqrt{\cos^2 t + 0.25 \sin^2 t}, \quad 0\leq t \leq 2\pi.
\end{equation*}
When these objects are surrounded by an appropriate circle, we observe that good cloaking and shielding occur in Figure \ref{fig-regular-shape}.

Without loss of generality, we confine our presentation to objects with boundary curve $\p D$ with some corners.  We consider some special shapes, for instance, triangles, squares and pentagons  inscribed in the circle of radius $100 \ \mu \mathrm{m}$. Here the cloaking and shielding region is a circle of radius $200 \ \mu \mathrm{m}$. Figure \ref{fig-corner-shape} shows good cloaking and shielding. These zeta potentials used in finite-element numerical simulations are calculated by the optimization method in Subsection \ref{subsec-optimal}. They are scaled by the characteristic value of $\tilde{\zeta}_0$.

\begin{figure}[H]
	\centering  
	\subfigcapskip=-10pt 
	\subfigure[]{
		\includegraphics[width=0.32\linewidth]{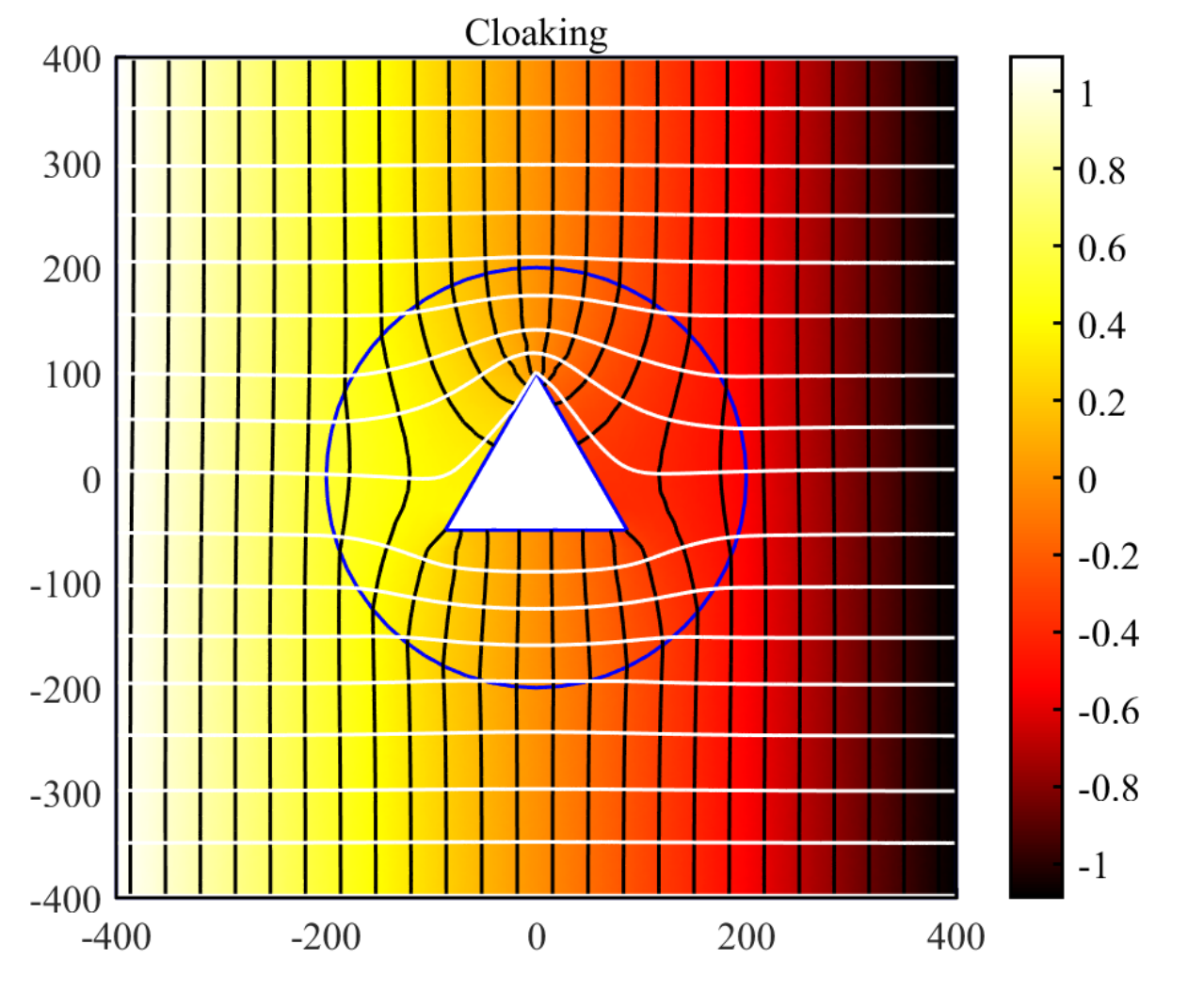}}
	\subfigure[]{
		\includegraphics[width=0.32\linewidth]{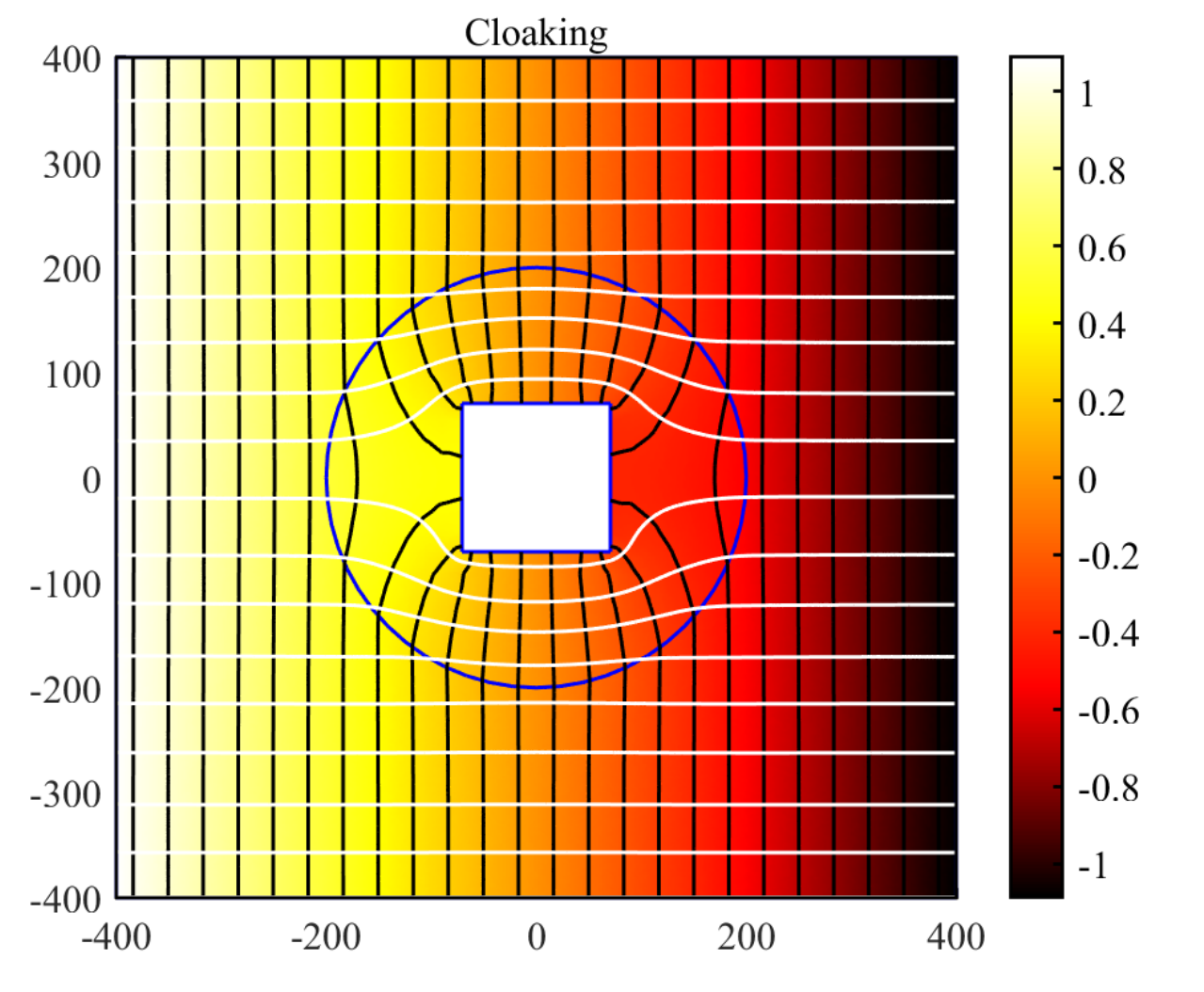}}
	\subfigure[]{
		\includegraphics[width=0.32\linewidth]{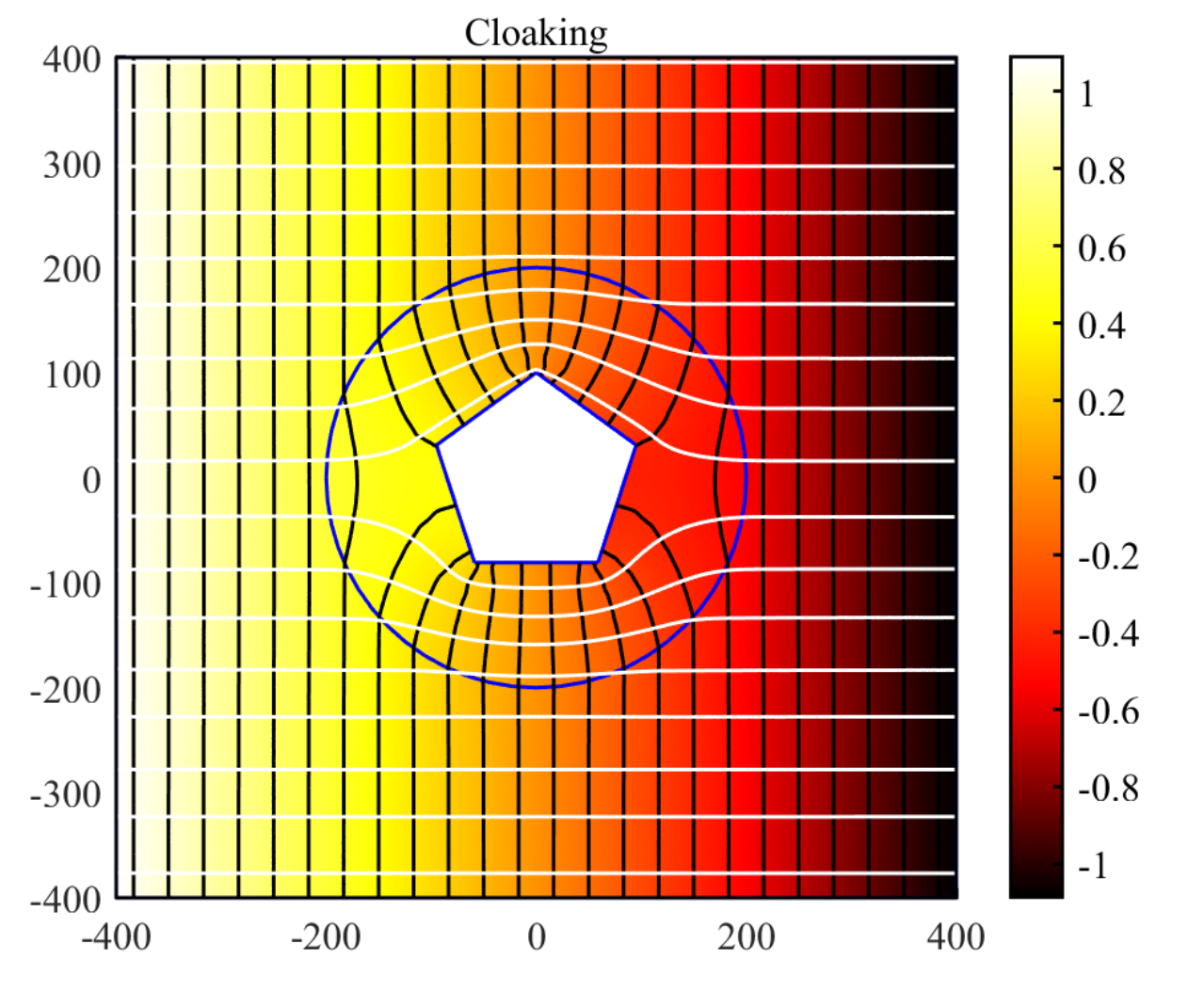}}\\
	\subfigure[]{
		\includegraphics[width=0.32\linewidth]{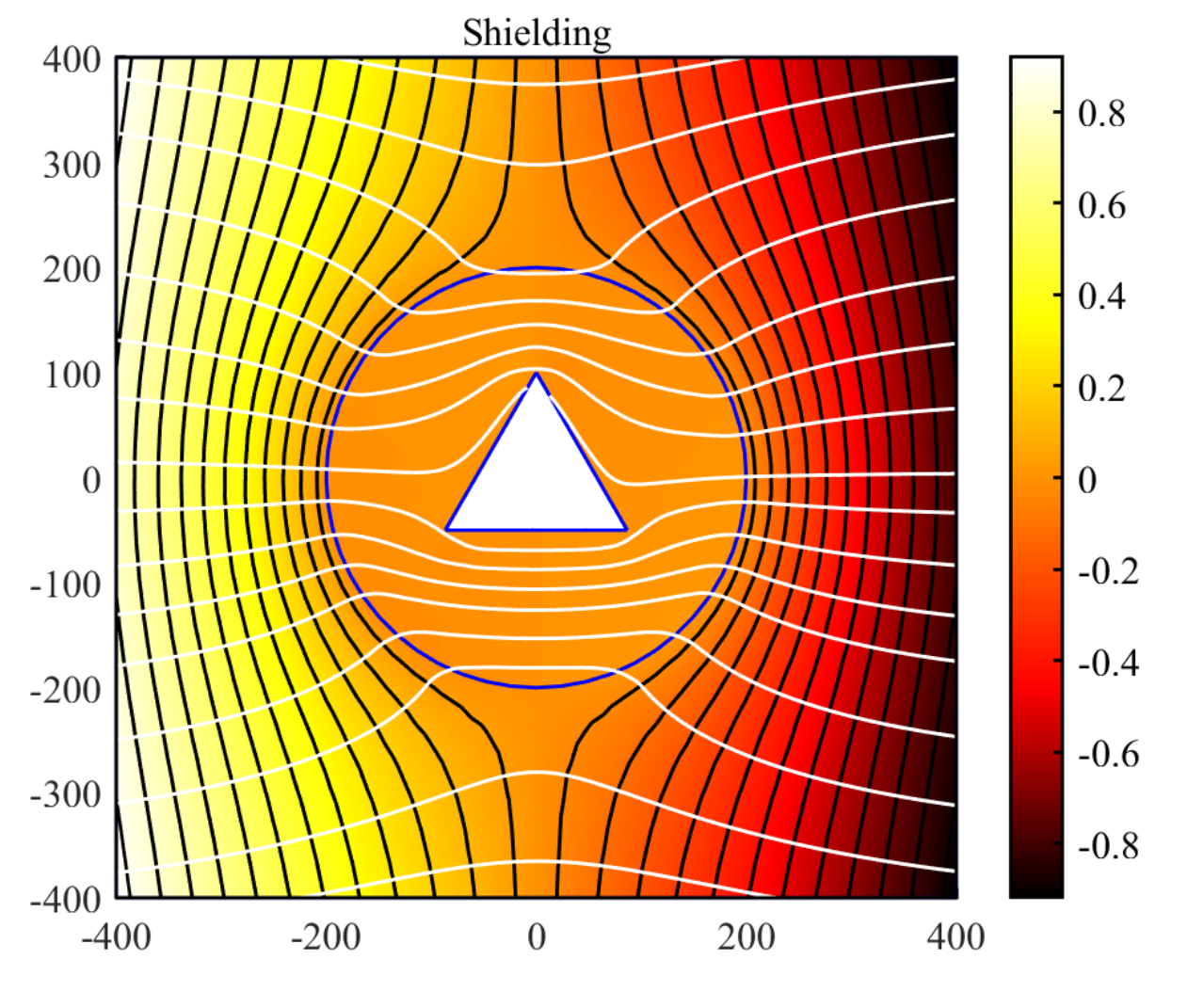}}
	\subfigure[]{
		\includegraphics[width=0.32\linewidth]{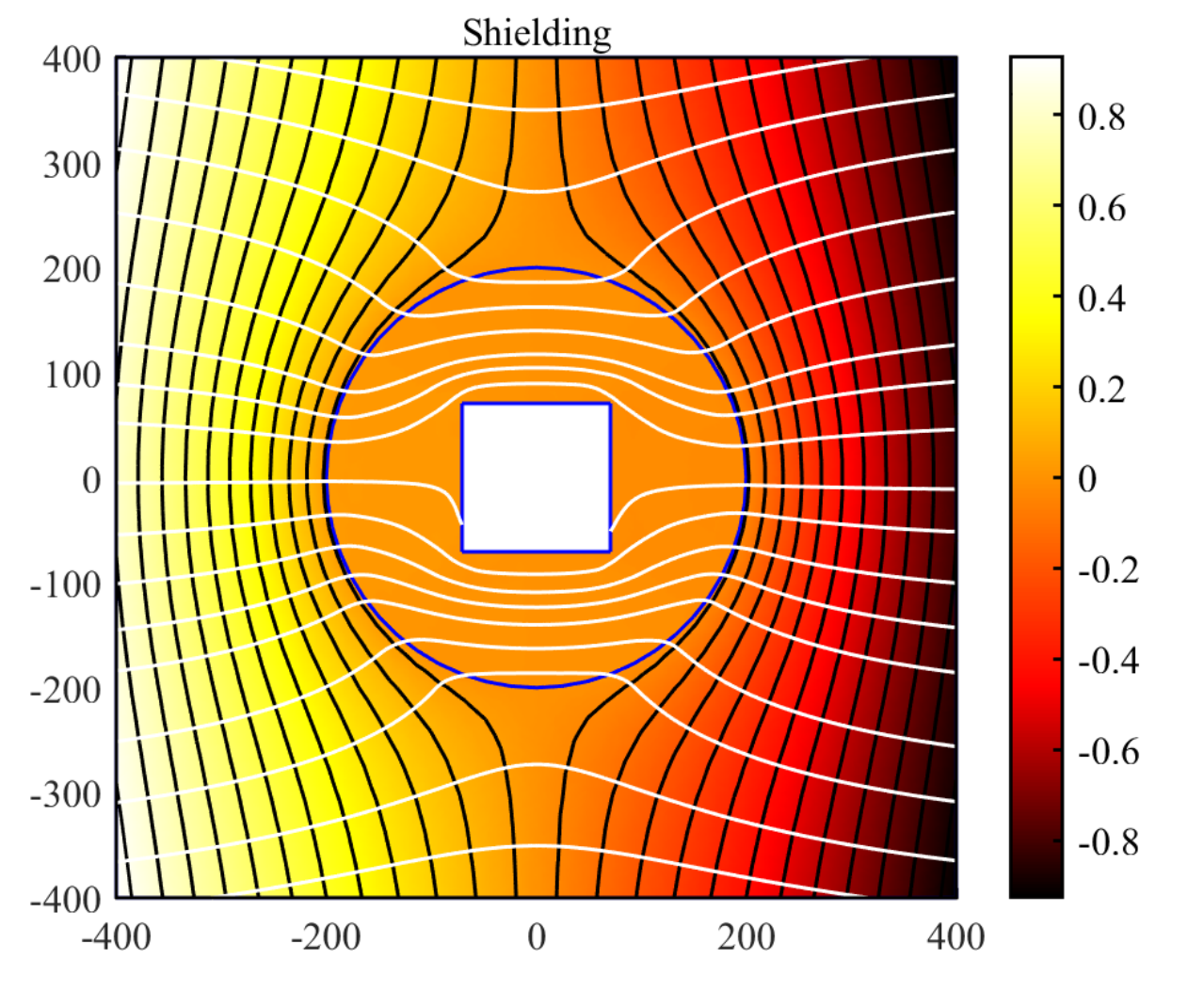}}
	\subfigure[]{
		\includegraphics[width=0.32\linewidth]{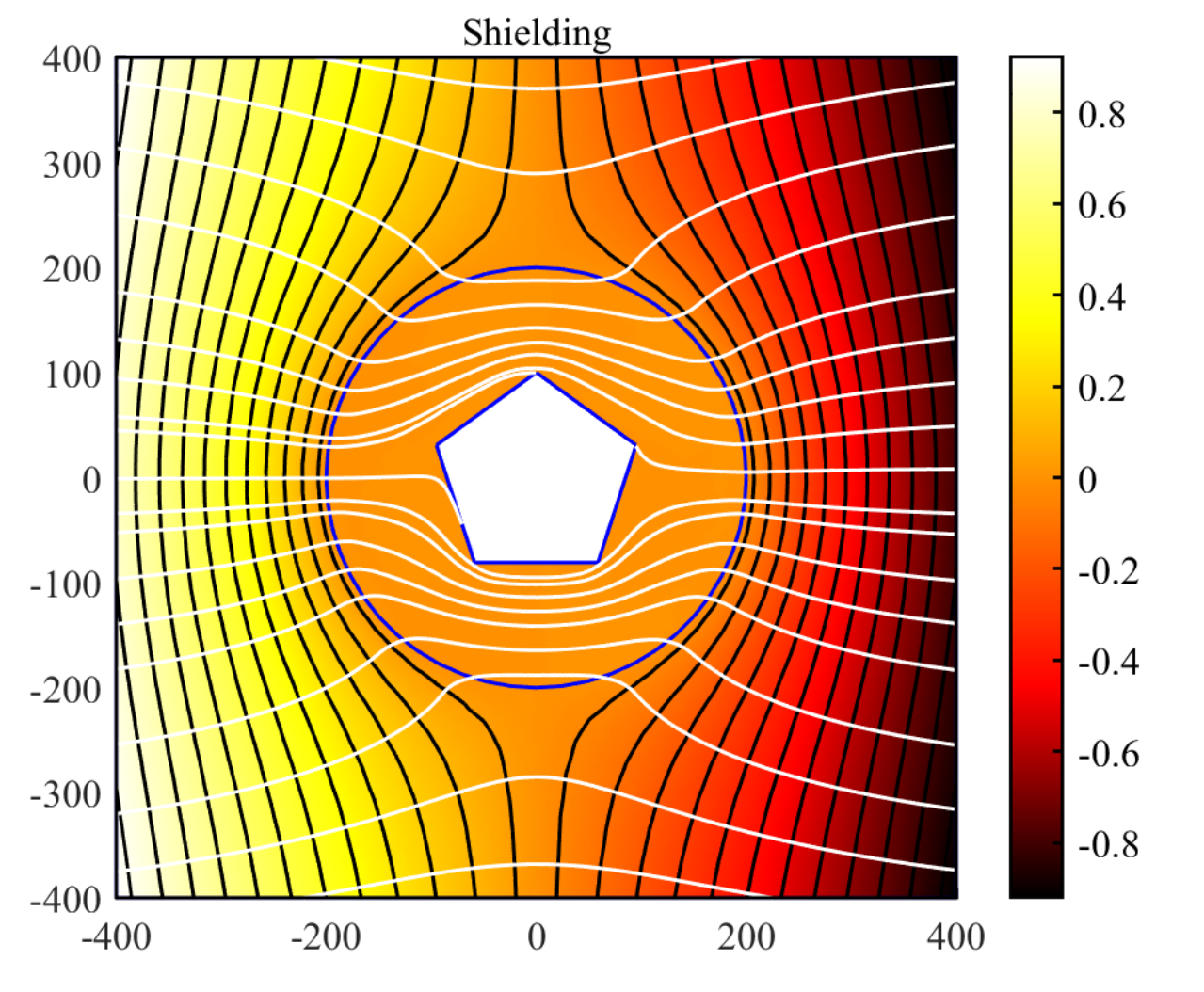}}
	\caption{Comparison of finite-element simulation results for some objects with corners. Numerical (a-f) results for the pressure distribution (colormap) and streamlines (white lines)
   corresponding to cloaking (a-c) and shielding (d-f). The zeta potentials are $\tilde{\zeta}_{0, opt} = -0.0636\ \mathrm{V} (a), -0.0912  \ \mathrm{V} (b), -0.0984  \ \mathrm{V} (c), -0.5568\ \mathrm{V} (d), -0.5904  \ \mathrm{V} (e)$ and $-0.5997  \ \mathrm{V} (f)$, respectively. }\label{fig-corner-shape}
\end{figure}

 We extend numerical simulations by investigating the possibility of cloaking multiple objects placed in close proximity to each other. Figures \ref{fig-cloaking-multi-circle}, \ref{fig-cloaking-multi-ellipse} and \ref{fig-cloaking-multi-circle-ellipse} show good cloaking for multiple circular, elliptical objects  and the combination of them. Here excellent cloaking remains exist. However, the shielding does not exist for multiple objects.
\begin{figure}[H]
	\centering  
	\subfigcapskip=-10pt 
	\subfigure[]{
		\includegraphics[width=0.32\linewidth]{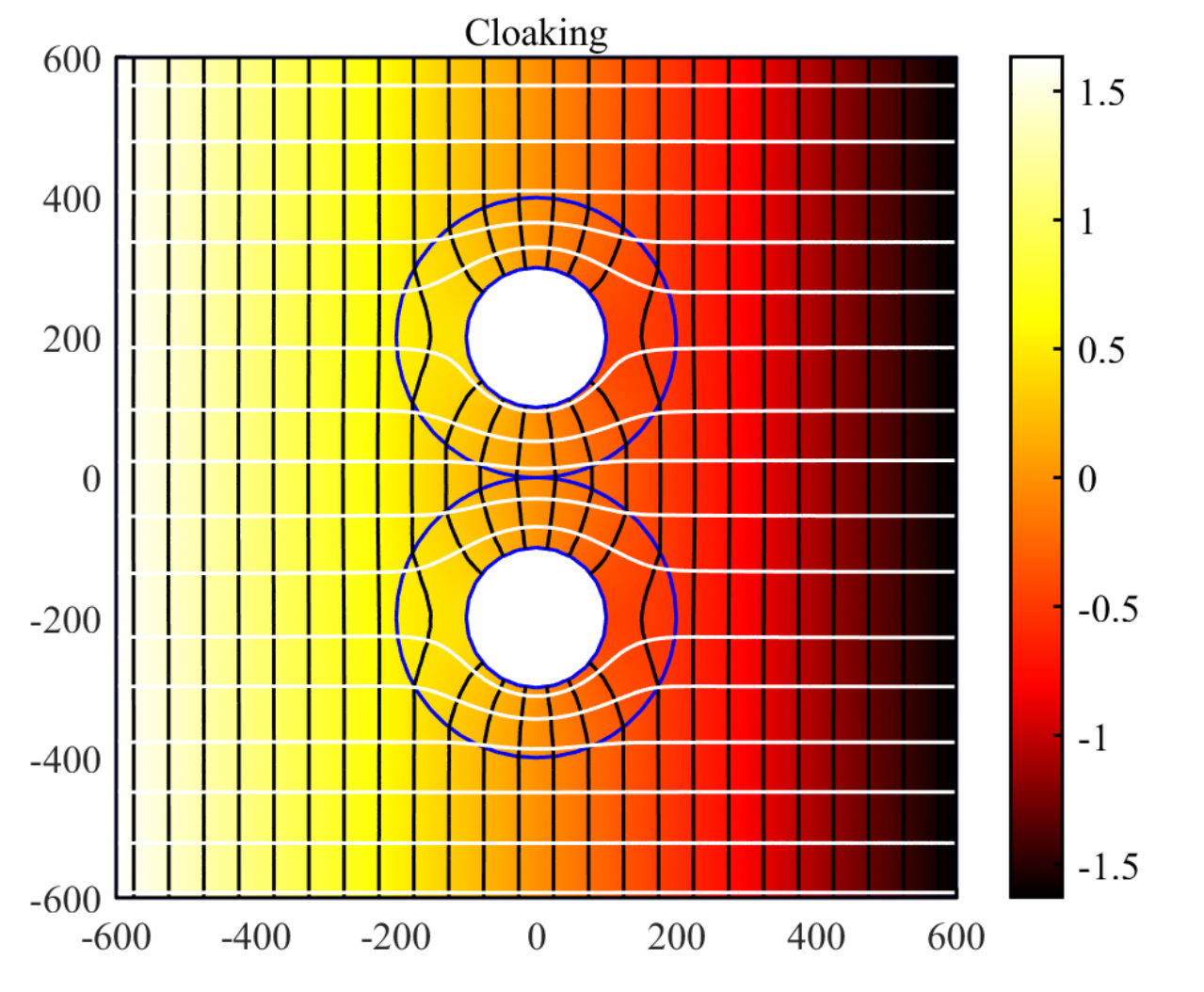}}
	\subfigure[]{
		\includegraphics[width=0.32\linewidth]{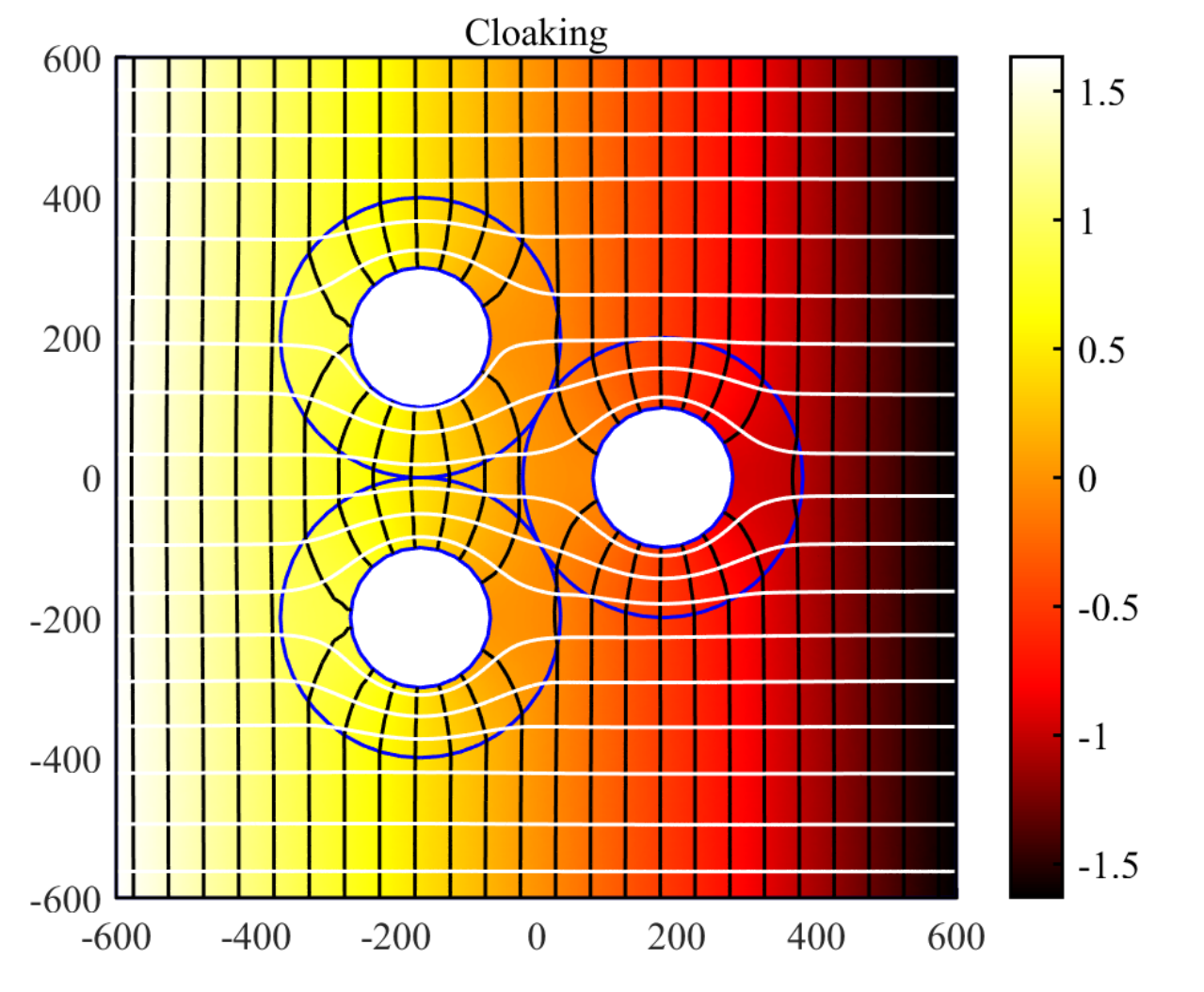}}
	\subfigure[]{
		\includegraphics[width=0.32\linewidth]{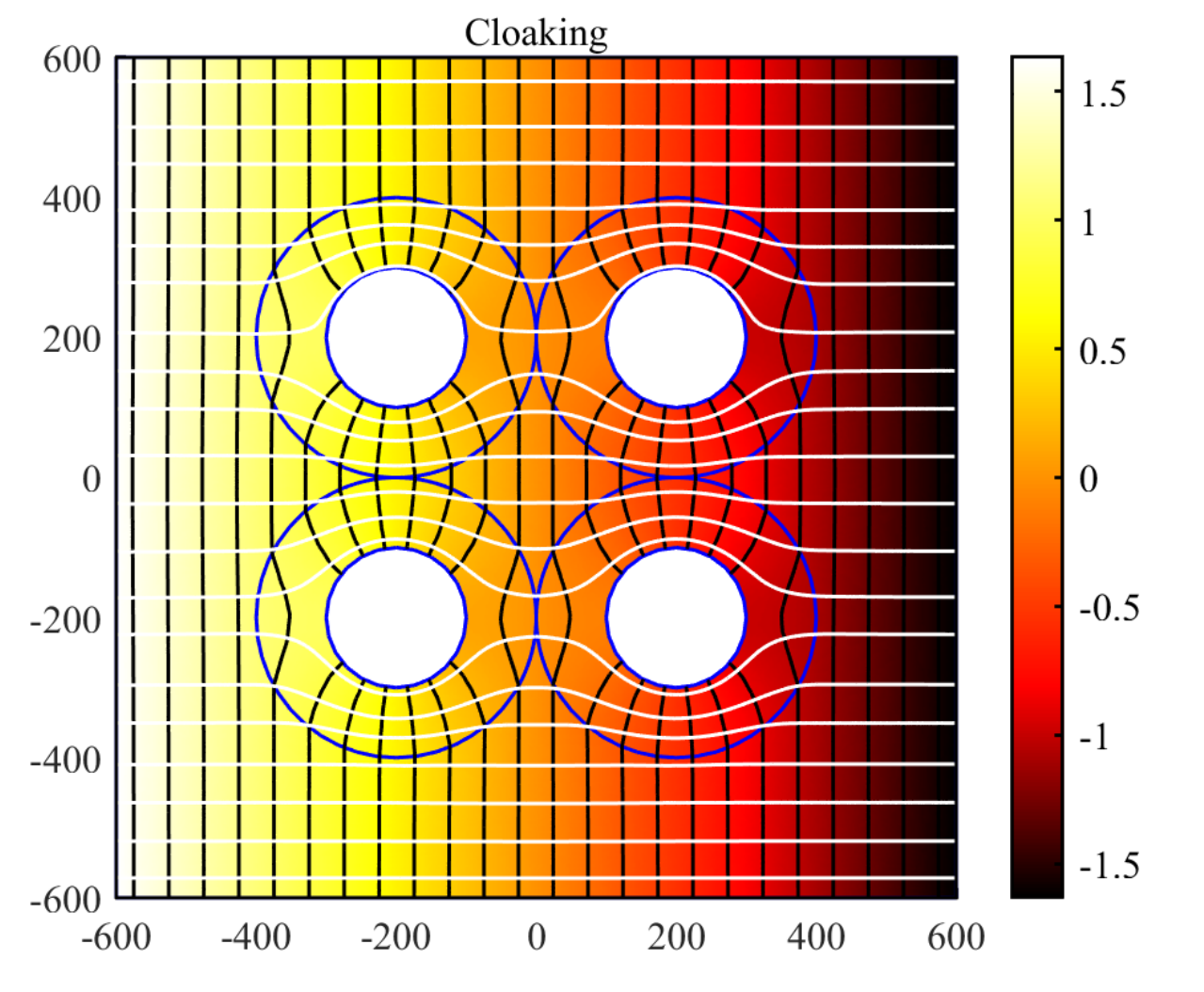}}
	\caption{Cloaking for multiple circular cylinder objects. Here $\tilde{r}_i=100 \ \mu \mathrm{m}$, $\tilde{r}_e=200 \ \mu \mathrm{m}$ and $\tilde{\zeta}_0=-0.128 \,\mathrm{V}$.}\label{fig-cloaking-multi-circle}
\end{figure}

\begin{figure}[H]
	\centering  
	\subfigcapskip=-10pt 
	\subfigure[]{
		\includegraphics[width=0.32\linewidth]{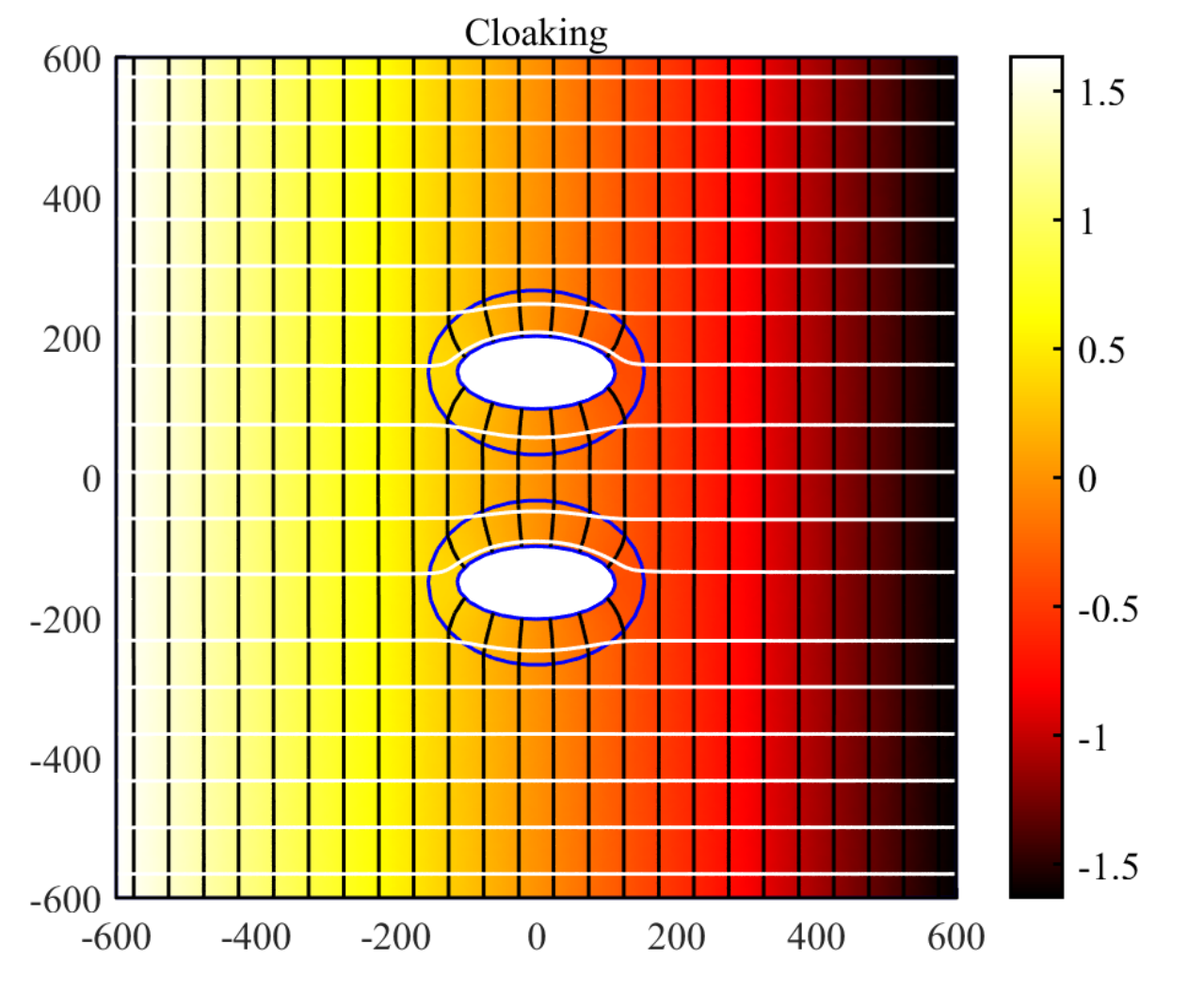}}
	\subfigure[]{
		\includegraphics[width=0.32\linewidth]{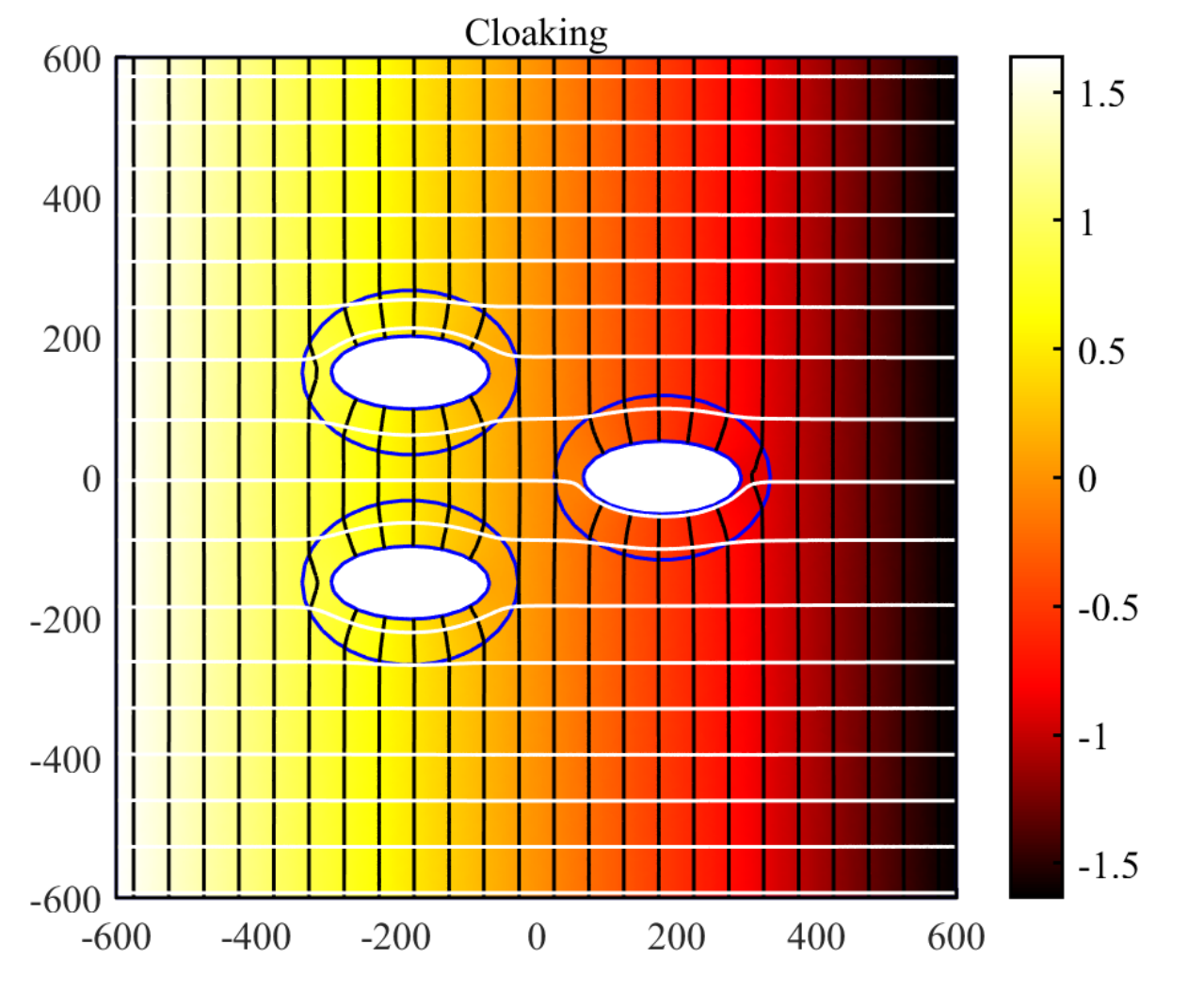}}
	\subfigure[]{
		\includegraphics[width=0.32\linewidth]{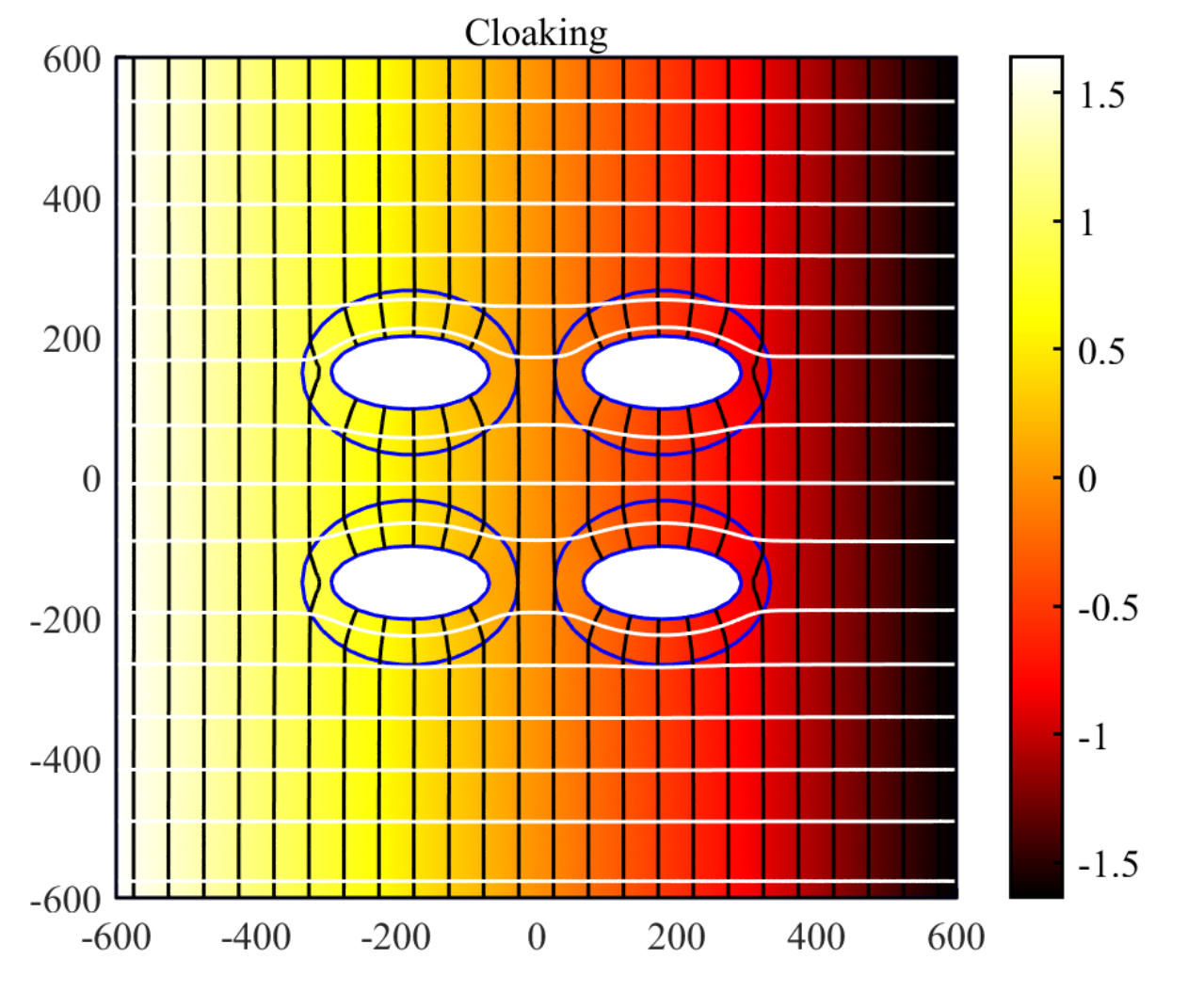}}
	\caption{Cloaking for multiple elliptic cylinder objects. Here $\tilde{\xi}_i=50 \ \mu \mathrm{m}$, $\tilde{\xi}_e=100 \ \mu \mathrm{m}$ and $\tilde{\zeta}_0=-0.1291 \,\mathrm{V}$.}\label{fig-cloaking-multi-ellipse}
\end{figure}

\begin{figure}[H]
	\centering  
	\subfigcapskip=-10pt 
	\subfigure[]{
		\includegraphics[width=0.32\linewidth]{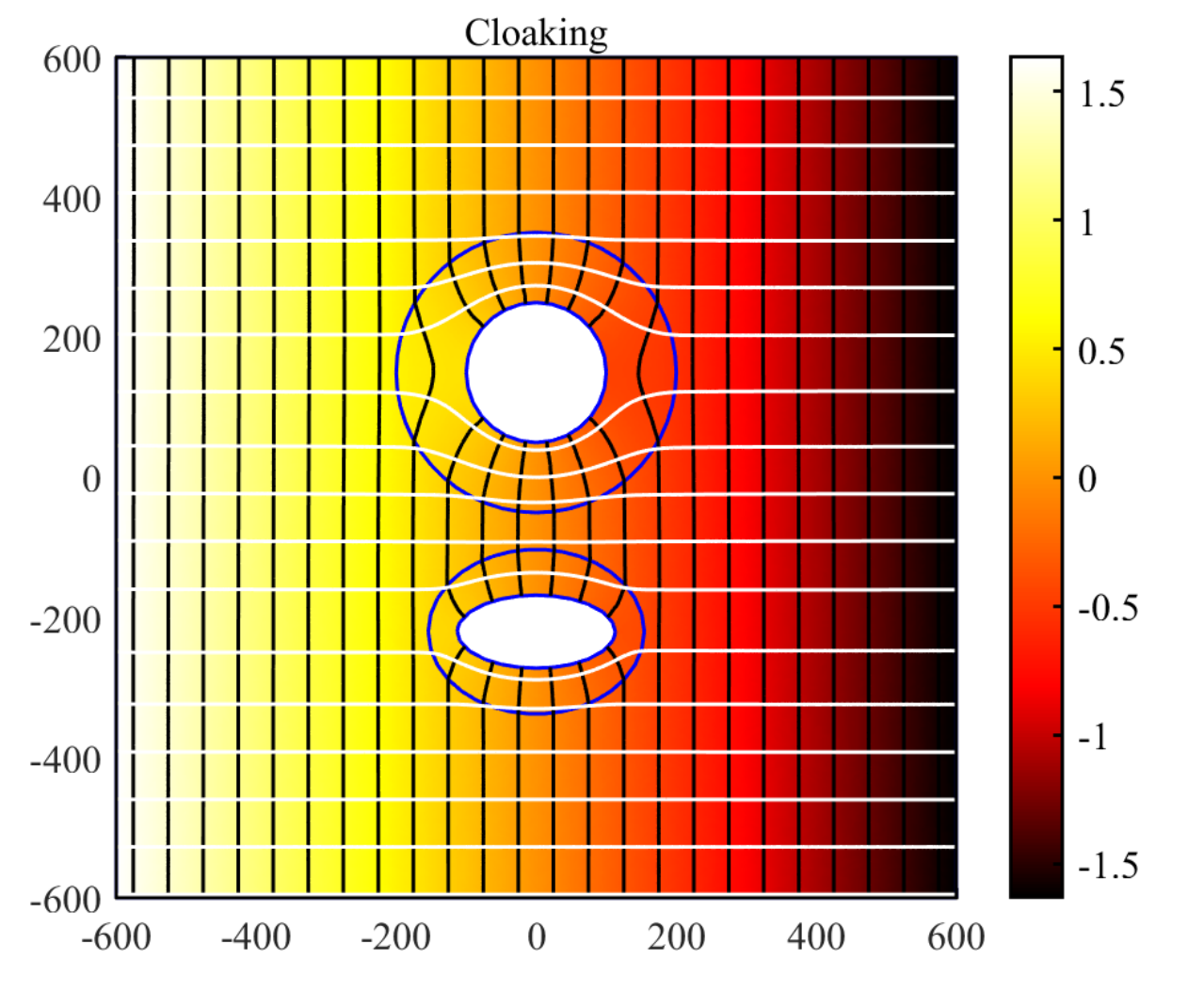}}
	\subfigure[]{
		\includegraphics[width=0.32\linewidth]{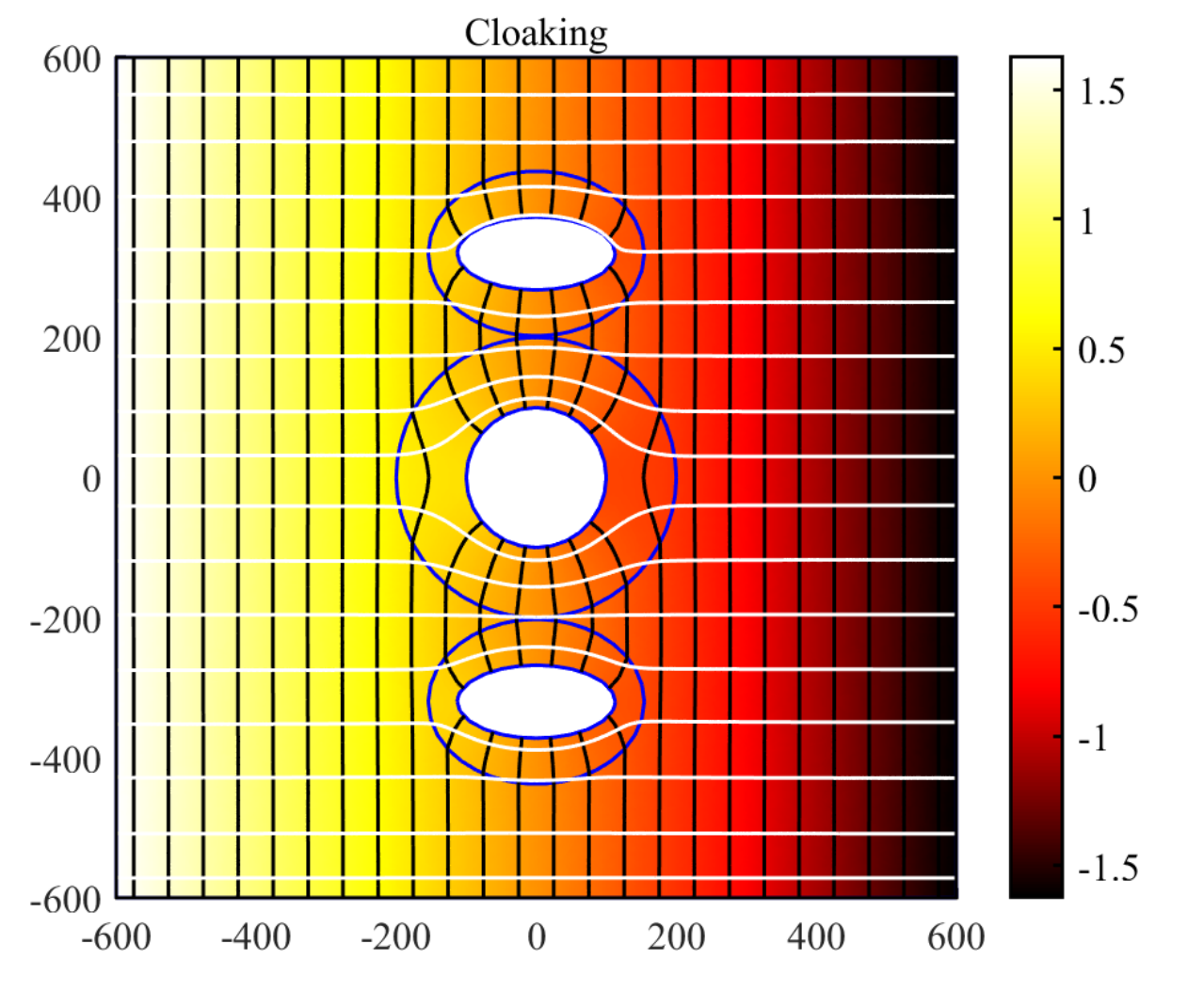}}
	\subfigure[]{
		\includegraphics[width=0.32\linewidth]{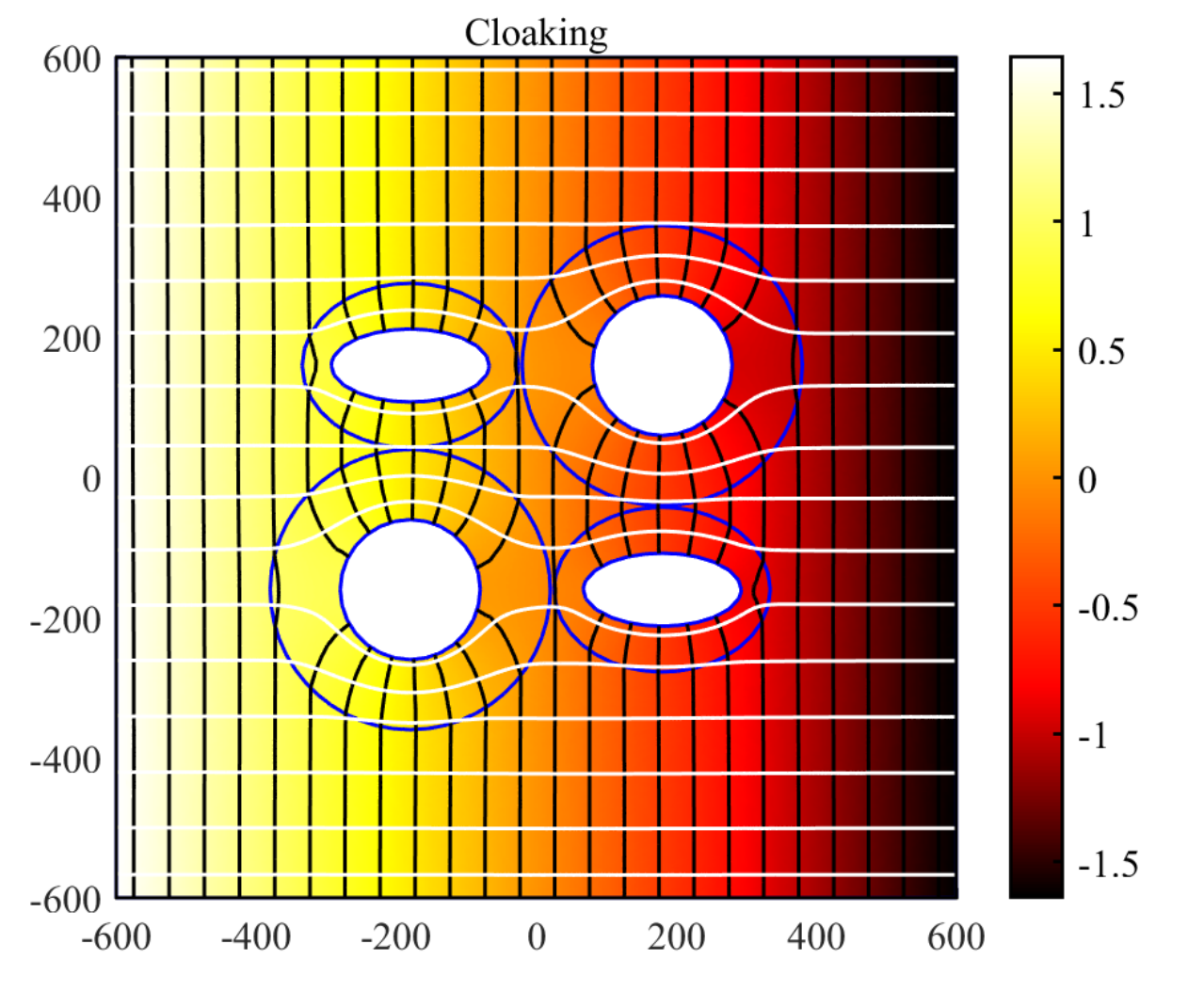}}
	\caption{Cloaking for the combination of circular and elliptic cylinder objects. The parameters are the same as that of Figures \ref{fig-cloaking-multi-circle} and \ref{fig-cloaking-multi-ellipse}.  }\label{fig-cloaking-multi-circle-ellipse}
\end{figure}

\section{Conclusion}\label{sect:6}
In this paper, we consider the hydrodynamic cloaking and shielding of objects in microscale flows. We established a systematic and comprehensive mathematical framework that allows us to derive the shielding and cloaking conditions via electroosmosis for complex geometries. In addition to the theoretical results, extensive numerical experiments were conducted to corroborate the theoretical findings. We would like to emphasize that the above approach is metamaterial-less, namely without using any ``exotic" materials. The corresponding “cloaks” and “shields” presented here have simpler structures than their metamaterial equivalents. Moreover, these structures are easy to control by adapting the gate electrode in physics. The idea can be extended in several directions:
(i) to investigate the enhancement of approximate hydrodynamic cloaking and shielding based on the perfect hydrodynamic cloaking and shielding on the annuls and confocal ellipses using the perturbation theory;
(ii) to study the case when the electric filed is cloaked simultaneously;
(iii) to consider the bigger scale flow.
These new developments will be reported in our forthcoming works.

\section*{Acknowledgment}
The research of H Liu was supported by NSFC/RGC Joint Research Scheme, N\_CityU101/21, ANR/RGC Joint Research Scheme, A-CityU203/19, and the Hong Kong RGC General Research Funds (projects 12302919, 12301420 and 11300821).

\begin{thebibliography}{99}
\bibitem{Alu2005}
{\sc A. Al\`{u} and N. Engheta},
{\em Achieving transparency with plasmonic and metamaterial coatings}, {\sl Phys. Rev. E.} 72 (2005), 016623.

\bibitem{Alu2009}
{\sc A. Al\`{u}},
{\em Mantle cloak: Invisibility induced by a surface}, {\sl Phys. Rev. B} 80 (2009), 245115.

\bibitem{Ammari2007}
{\sc H. Ammari and H. Kang},
{\em Polarization and Moment Tensors with Applications to Inverse Problems and Effective Medium Theory}, (Springer-Verlag, New York, 2007).

\bibitem{ammari2013}
{\sc H. Ammari, H. Kang, H. Lee and M. Lim},
{\em Enhancement of approximate-cloaking. Part II: The Helmholtz equation}, {\sl Comm. Math. Phys.} 317 (2013), 485-502.

\bibitem{Ammari2013}
{\sc H. Ammari, G. Ciraolo, H. Kang, H. Lee and G. Milton},
{\em Spectral theory of a Neumann-Poincar\'{e}-type operator and analysis of cloaking due to anomalous localized resonance}, {\sl Arch. Rational Mech. Anal.} 208 (2013), 667-692.

\bibitem{Ando2016}
{\sc K. Ando and H. Kang},
{\em Analysis of plasmon resonance on smooth domains using spectral properties of the Neumann–Poincar\'{e} operator}, {\sl Jour. Math. Anal. Appl.} 435 (2016), 162–178.

\bibitem{bao2014}
{\sc G. Bao, H. Liu and J. Zou}, {\em Nearly cloaking the full Maxwell equations: cloaking active contents with general conducting layers}, {\sl J. Math. Pures Appl.} 101 (2014), 716-733.

\bibitem{Boyko2021}
{\sc E. Boyko, V. Bacheva, M. Eigenbrod, F. Paratore, A. Gat, S. Hardt and M. Bercovici},
{\em Microscale hydrodynamic cloaking and shielding via electro-osmosis}, {\sl Phys. Rev. Lett}. 126 (2021), 184502.

\bibitem{Alu2011}
{\sc P. Chen and A. Al\`{u}},
 {\em Mantle cloaking using thin patterned metasurfaces}, {\sl Phys. Rev. B} 84 (2011), 205110.

\bibitem{Chen2011}
{\sc P. Chen, F. Monticone and A. Al\`{u}},
 {\em Suppressing the electromagnetic scattering with an helical mantle cloak}, {\em IEEE Antennas Wireless Propag. Lett.} 10 (2011), 1598.

\bibitem{Chen2012}
{\sc P. Chen, J. Soric and A. Al\`{u}},
 {\em Invisibility and cloaking based on scattering cancellation}, {\sl Adv. Mater.} 24 (2012), OP281COP304.

\bibitem{Chung2014}
{\sc D. Chung, H. Kang, K. Kim and H. Lee},
{\em Cloaking due to anomalous localized resonance in plasmonic structures of confocal ellipses}, {\sl SIAM J. Appl. Math.} 74 (2014), 1691–1707.

\bibitem{Craster2017}
{\sc R. Craster, S. Guenneau, H. Hutridurga and G. Pavliotis},
{\em Cloaking via Mapping for the Heat Equation}, {\sl Multiscale Model. Simul.}, 16 (2017), 1146-1174.

\bibitem{deng2017}
{\sc Y. Deng, H. Liu and G. Uhlmann}, {\em On regularized full- and partial-cloaks in acoustic scattering}, {\sl Commun. Part. Differ. Equ.} 42 (2017), 821-851.

\bibitem{deng2017(1)}
{\sc Y. Deng, H. Liu and G. Uhlmann},
{\em Full and partial cloaking in electromagnetic scattering}, {\sl Arch. Ration. Mech. Anal.} 223 (2017), 265-299.

\bibitem{GLU}
{\sc A. Greenleaf, M. Lassas and G. Uhlmann}, 
{\em On nonuniqueness for Calder\'on's inverse problem}, {\sl Math. Res. Lett.} 10 (2003), no. 5-6, 685--693. 

\bibitem{greenleaf2009}
{\sc A. Greenleaf, Y. Kurylev, M. Lassas and G. Uhlmann},
{\em Cloaking devices, electromagnetic wormholes and transformation optics}, {\sl SIAM Review} 51 (2009), 3-33.

\bibitem{Hele1898}
{\sc H. Hele-Shaw},
{\em The flow of water}, {\sl Nature} 58, 34  (1898).

\bibitem{Hunter2001}
{\sc R. Hunter},
{\em Foundations of Colloid Science}, (Oxford University Press, New York, 2001).

\bibitem{Kohn2010}
{\sc R. Kohn, D. Onofrei, M. Vogelius and M. Weinstein},
{\em Cloaking via change of variables for the helmholtz equation}, {\sl Comm. Pure Appl. Math.}, 63 (2010), 973--1016.

\bibitem{Kress1989}
{\sc R. Kress},
{\em Linear Integral Equations}, (Springer-Verlag, New York, 1989).

\bibitem{Leonhardt2006}
{\sc U. Leonhardt}, {\em Optical conformal mapping}, {\sl Science}, 312 (2006), pp. 1777--1780.

\bibitem{li2016}
{\sc H. Li and H. Liu}, {\em On anomalous localized resonance for the elastostatic system}, {\sl SIAM J. Math. Anal.} 48 (2016), 3322-3344.

\bibitem{li2018}
{\sc H. Li and H. Liu},
{\em On novel elastic structures inducing polariton resonances with finite frequencies and cloaking due to anomalous localized resonance}, {\sl J. Math. Pures Appl.} 120 (2018), 195-219.

\bibitem{Liu2009}
{\sc H. Liu},
{\em Virtual reshaping and invisibility in obstacle scattering}, {\sl Inverse Probl.} 25 (2009), 045006.

\bibitem{Munk2005}
{\sc B. Munk},
{\em Frequency Selective Surfaces: Theory and Design} (John Wiley \& Sons, New York, 2005).

\bibitem{Narayana2012}
{\sc S. Narayana and Y. Sato},
{\em Heat Flux Manipulation with Engineered Thermal Materials}, {\sl Phys. Rev. Lett.} 108 (2012), 214303.

\bibitem{Padooru2012}
{\sc Y. Padooru, A. Yakovlev, P. Chen and A. Al\`{u}},
{\em Line-source excitation of realistic conformal metasurface cloaks}, {\sl J. Appl. Phys.} 112 (2012), 104902.

\bibitem{Park2019a}
{\sc J. Park, J. Youn and Y. Song},
{\em Hydrodynamic metamaterial cloak for drag-free flow}, {\sl Phys. Rev. Lett.} 123 (2019), 074502.

\bibitem{Park2019b}
{\sc J. Park, J. Youn and Y. Song},
{\em Fluid-flow rotator based on hydrodynamic metamaterial}, {\sl Phys. Rev. Appl.} 12 (2019), 061002.

\bibitem{Park2021}
{\sc J. Park, J. Youn, Y. Song},
{\em Metamaterial hydrodynamic flow concentrator}, {\sl Extreme Mech. Lett.} 42 (2021), 101061.

\bibitem{Pendry2006}
{\sc J. Pendry, D. Schurig and D. Smith},
{\em Controlling electromagnetic fields}, {\sl Science}, 12(5781): (2006), pp. 1780--1782.

\bibitem{Stenger2012}
{\sc N. Stenger, M. Wilhelm and M. Wegener},
{\em Experiments on Elastic Cloaking in Thin Plates}, {\sl Phys. Rev. Lett.} 108 (2012), 014301.

\bibitem{Tretyakov2003}
{\sc S. Tretyakov},
{\em Analytical Modeling in Applied Electromagnetics} (Artech House, Boston, 2003).

\bibitem{Urzhumov2011}
{\sc Y. Urzhumov and D. Smith},
{\em Fluid flow control with transformation media}, {\sl Phys. Rev. Lett.} 107 (2011), 074501.

\bibitem{Urzhumov2012}
{\sc Y. Urzhumov and D. Smith},
{\em Flow stabilization with active hydrodynamic cloaks}, {\sl Phys. Rev. E} 86 (2012), 056313.

\bibitem{Yang2012}
{\sc F. Yang, Z. Mei, T. Jin and T. Cui},
{\em dc ElectricInvisibility Cloak}, {\sl Phys. Rev. Lett.} 109 (2012), 053902.

\bibitem{Zhang2008}
{\sc S. Zhang, D. Genov, C. Sun and X. Zhang},
{\em Cloaking of Matter Waves}, {\sl Phys. Rev. Lett.} 100 (2008), 123002.

\bibitem{Zhang2020}
{\sc Z. Zhang, S. Liu, Z. Luan, Z. Wang and G. He},
{\em Invisibility concentrator for water waves}, {\sl Phys. Fluids} 32 (2020), 081701.

\bibitem{Zou2019}
{\sc S. Zou, Y. Xu, R. Zatianina, C. Li, X. Liang, L. Zhu, Y. Zhang, G. Liu, Q. Liu, H. Chen and Z. Wang},
{\em Broadband Waveguide Cloak for Water Waves}, {\sl Phys. Rev. Lett.} 123 (2019), 074501.
\end {thebibliography}

\end{document}